\numberwithin{equation}{section}
\newtheoremstyle{plainNoItalics}{}{}{\normalfont}{}{\bfseries}{.}{ }{}
\theoremstyle{plain}
\newtheorem{thm}{Theorem}[section]
\theoremstyle{plainNoItalics}
\newtheorem{lem}[thm]{Lemma}
\newtheorem{rem}[thm]{Remark}
\newtheorem{exa}[thm]{Example}
\newcommand{\RR}{\mathbb R}
\newcommand{\NN}{\mathbb N}
\newcommand{\mD}{{\mathcal D}}
\newcommand{\mP}{{\mathcal P}}
\newcommand{\mO}{{\mathcal O}}
\newcommand{\be}{\begin{eqnarray}}
\newcommand{\ee}{\end{eqnarray}}
\newcommand{\beno}{\begin{eqnarray*}}
\newcommand{\eeno}{\end{eqnarray*}}
\newcommand{\Rmnum}[1]{\expandafter\@slowromancap\romannumeral #1@}
\begin{document}

\title{A Kernel-Based Explicit Unconditionally Stable Scheme for Hamilton-Jacobi Equations on Nonuniform Meshes}
\date{}
\author{Andrew Christlieb${}^{1,*}$, William Sands${}^{1,\dagger}$ and Hyoseon Yang${}^{1,\ddagger}$}
\footnotetext[1]{Department of Computational Mathematics, Science and Engineering, Michigan State University, East Lansing, MI, 48824, United States. $^*$christli@msu.edu, ${}^\dagger$sandswi3@msu.edu, and ${}^\ddagger$hyoseon@msu.edu; corresponding author.}
\maketitle

\begin{abstract}
	In \cite{christlieb2019kernel},
	the authors developed a class of high-order numerical schemes for the Hamilton-Jacobi (H-J) equations, which are unconditionally stable, yet take the form of an explicit scheme.
	This paper extends such schemes, so that they are more effective at capturing sharp gradients, especially on nonuniform meshes.
	In particular, we modify the weighted essentially non-oscillatory (WENO) methodology in the previously developed schemes by incorporating an exponential basis and adapting the previously developed nonlinear filters used to control oscillations. The main advantages of the proposed schemes are their effectiveness and simplicity, since they can be easily implemented on higher-dimensional nonuniform meshes. We perform numerical experiments on a collection of examples, including H-J equations with linear, nonlinear, convex and non-convex Hamiltonians. To demonstrate the flexibility of the proposed schemes, we also include test problems defined on non-trivial geometry.
\end{abstract}

{\bf Key Words:} Hamilton-Jacobi equation; Kernel based scheme; Unconditionally stable; High order accuracy; Weighted essentially non-oscillatory methodology; Exponential basis; Nonuniform meshes.

\pagestyle{myheadings} \thispagestyle{plain}\markboth{}{}
\section{Introduction}

In this paper, we propose a class of high-order, weighted essentially non-oscillatory numerical schemes for approximating the viscosity solution to the Hamilton-Jacobi (H-J) equation 
\begin{align}\label{eq:HJ}
\begin{cases}
\phi_t + H(\nabla  \phi)  = 0, \quad x  \in \mathbb{R}^d \\
\phi(x,0)  = \phi_0(x),
\end{cases}
\end{align}
where $\phi=\phi(x,t)$ is a scalar function and $H$ is a Lipschitz continuous Hamiltonian. 
The H-J equations play a significant role among many fields, including optimal control, geometric optics, differential games, computer vision and image processing, as well as variational calculus. 
It is well known that as time evolves, the H-J equations develop continuous solutions, of which, associated derivatives might be discontinuous, even for smooth initial conditions. If the solution is redefined in a weak sense, regularity conditions on the function $\phi$ can be relaxed; however, such solutions may not be unique. To identify the unique, physically relevant solution, the concept of vanishing viscosity was introduced \cite{crandall1983viscosity, crandall1984some}. In subsequent papers,\cite{crandall1984two, soug}, authors addressed the convergence of general approximation schemes to the viscosity solution of \eqref{eq:HJ}.

There have been many numerical schemes developed to solve the H-J equations. Methods among the existing literature include the essentially non-oscillatory (ENO) schemes \cite{osher1988fronts, osher1991high}, weighted ENO (WENO) schemes \cite{jiang2000weighted, zhang2003high}, Hermite WENO schemes \cite{qiu2005hermite, qiu2007hermite,zhu2013hermite, zheng2017finite}, as well as discontinuous Galerkin methods \cite{hu1999discontinuous, lepsky2000analysis, cheng2007discontinuous, yan2011local, cheng2014new}. These schemes are typically categorized within the Method of Lines (MOL) framework, in which the spatial variable is discretized first, then the resulting initial value problems (IVPs) are solved by coupling with a suitable time integrator. This work takes an alternative approach: First, discretization is completed on the temporal variable, then, the resulting boundary value problems (BVPs) are solved at discrete time levels. To solve the BVPs, the continuous operator (in space) is inverted analytically, using an integral solution. We refer to this approach as the Method of Lines Transpose (MOL$^T$), which is also known as Rothes's method \cite{schemann1998adaptive, salazar2000theoretical, causley2014method}. These methods are formally matrix-free, in the sense that there is no need to solve linear systems at each time step. Moreover, this integral solution extends the so-called domain-of-dependence, so that the method does not suffer from a CFL restriction. 
The kernel used in this formulation also exhibits pleasant numerical properties with several developments. To approximate the integral equations in BVP, 
the fast multipole method(FMM) solved the heat, Navier-Stokes and linearized Poisson-Boltmann equation in \cite{fmm0,fmm1},
Fourier-continuation alternating-direction(FC-AD) algorithm yields unconditionally stability  from $\mathcal{O}(N^2)$ to  $\mathcal{O}(N\log N)$ \cite{fc-ad1,fc-ad2}
and Causley {\it et al.}\cite{causley2013method} reduces the computational complexity of the method from $\mathcal{O}(N^2)$ to $\mathcal{O}(N)$. A variety of schemes, based on the MOL$^T$ formulation, have been developed for solving a range of time-dependent PDEs, including the wave equation \cite{causley2014method}, the heat equation (e.g., the Allen-Cahn equation \cite{causley2016method} and Cahn-Hilliard equation \cite{causley2017method}), Maxwell's equations \cite{cheng2017asymptotic}, and the Vlasov equation \cite{christlieb2016weno}.

Recent work on the MOL$^T$ has involved extending the method to solve more general nonlinear PDEs, for which an integral solution is generally not applicable. This work includes the nonlinear degenerate convection-diffusion equations \cite{christlieb2017kernel}, as well as the H-J equations \cite{christlieb2019kernel}. The key idea of these papers involved exploiting the linearity of a given \textit{differential operator}, rather than requiring linearity in the underlying equations. This allowed derivative operators in the problems to be expressed through kernel representations developed for linear problems. Formulating applicable derivative operators in this way ultimately facilitated the stability of the schemes, since a global coupling was introduced through the integral operator. As part of this embedding process, a kernel parameter $\beta$ was introduced, and through a careful selection, was shown to yield schemes which are A-stable. Remarkably, it was shown that one could couple these representations for the derivative operators with an explicit time-stepping method, such as the strong-stability-preserving Runge-Kutta (SSP-RK) methods \cite{gottlieb2001strong} and still obtain schemes which maintain unconditional stability \cite{christlieb2017kernel,christlieb2019kernel}. To address shock-capturing and control non-physical oscillations, the latter two papers introduced quadrature formulas based on WENO reconstruction, along with a nonlinear filter. 



This paper seeks to extend the work in \cite{christlieb2017kernel, christlieb2019kernel} to the H-J equations \eqref{eq:HJ} defined on non-uniformly distributed spatial domains. In particular, several improvements are given. First, we develop the MOL$^T$ for mapped grids using a general coordinate transformation function, which allows for a non-uniform distribution of grid points. We show that, with this mapping, our numerical scheme is able to preserve the conservation property for the derivative of the solution to the H-J equation. We also describe a novel WENO-based quadrature for the spatial discretization, which uses a basis consisting of exponential polynomials, to improve the shock capturing capabilities of the method. Another difference in this paper, compared to our previous work on H-J equations, is that we propose a different nonlinear filter, which, we believe, is more effective at minimizing oscillations in the derivative of the solution to the PDE \eqref{eq:HJ}.

The paper is organized as follows. We first review the kernel-based representations for first and second order derivative operators, and address boundary conditions for both periodic and non-periodic problems, in Section 2. In Section 3, we present our numerical scheme for H-J equations on nonuniform grids with an algorithm flowchart. A collection of numerical examples is presented to demonstrate the performance of the proposed method in Section 4. In Section 5, we conclude the paper with some remarks and directions of future work.

\section{Review for the approximation of differential operators}

We start with a brief review on construction of derivative operators using the methodologies proposed in \cite{christlieb2017kernel,christlieb2019kernel}. The second order derivative, e.g., $\partial_{xx}$, shall described first, as it will be used in the representation of first derivatives. Note that representations are formed using 1D examples, but the line-by-line approach allows us the reuse these expressions, with an appropriate swapping of the direction. 

\subsection{Second order derivative $\partial_{xx}$}
\label{sec:second order derivative}
In this section we will develop an approximation to $\partial_{xx}$ based on a fast kernel method.  The starting point is a Helmholtz operator of the ``right sign'', meaning that the inverses is represented by a ``compact'' kernel.  Here ``compact'' refers to a kernel that is represented as a function instead of an infinite sum. This representation is used to build an approximation to $\partial_{xx}$.

Motivated by work done for parabolic equations (see e.g., \cite{causley2016method,causley2017method}), we define the differential operator
\begin{equation}
    \label{eq:L_0 operator definition}
    \mathcal{L}_{0} := \mathcal{I} - \frac{1}{\alpha^{2}} \partial_{xx}, \quad x \in [a,b],
\end{equation}
where $\mathcal{I}$ is the identity operator, and $\alpha$ is a positive constant, which shall be specified later. We now suppose that there are two functions $w(x)$ and $v(x)$, which satisfy the equation
\begin{equation}
    \label{eq: second derivative BVP}
    \left( \mathcal{I} - \frac{1}{\alpha^{2}} \partial_{xx} \right) w(x) = v(x).
\end{equation}
Noting that this is a linear equation of the form
\begin{equation*}
    \mathcal{L}_{0}[w;\alpha](x) = v(x),
\end{equation*}
it follows that the solution can be obtained through an analytic inversion of the operator $\mathcal{L}_{0}$:
\begin{equation*}
    w(x) = \mathcal{L}_{0}^{-1}[v;\alpha](x).
\end{equation*}
Written more explicitly, the expression for $w(x)$ can be determined to be
\begin{equation}
    \label{eq: Inversion of second derivative}
    w(x) = I_{0}[v; \alpha](x) + A_{0}e^{-\alpha (x - a)} + B_{0}e^{-\alpha (b - x)},
\end{equation}
where 
\begin{equation}
    \label{eq: Second derivative convolution integral}
    I_{0}[v; \alpha](x) := \frac{\alpha}{2} \int_{a}^{b} e^{-\alpha \lvert x - s \rvert}v(s) \, ds,
\end{equation}
is a convolution integral and the constants $A_{0}$ and $B_{0}$ are determined by boundary conditions. If the PDE is linear, e.g., the heat equation, then \eqref{eq: Inversion of second derivative} is a valid expression for the update, and $A_{0}$ and $B_{0}$ can be determined using the boundary conditions specified by the problem. Otherwise, they will need to be carefully prescribed to maintain consistency. We will address this issue in Sections \ref{sec:periodic} and \ref{sec:non_periodic}.

To develop a suitable expression for the second derivative, we introduce the related operator $\mathcal{D}_{0}$, which is defined as
\begin{equation}
    \label{eq:D0 operator}
    \mathcal{D}_{0} = \mathcal{I} - \mathcal{L}_{0}^{-1}.
\end{equation}
Through some algebraic manipulations, one can write an alternative definition for $\mathcal{L}_{0}$ in terms of $\mathcal{D}_{0}$, i.e.,
\begin{equation*}
    \mathcal{L}_{0} = \left( \mathcal{I} - \mathcal{D}_{0} \right)^{-1}.
\end{equation*}
If the operator norm for $\mathcal{D}_{0}$ is bounded by unity, then, using the definition \eqref{eq:L_0 operator definition}, we can express the second derivative operator as a Neumann series:
\begin{equation*}
    \frac{1}{\alpha^{2}} \partial_{xx} = \mathcal{I} - \mathcal{L}_{0} = \mathcal{L}_{0} \left( \mathcal{L}_{0} - \mathcal{I} \right) = - \mathcal{D}_{0} \left( \mathcal{I} - \mathcal{D}_{0} \right)^{-1} = - \sum_{p = 1}^{ \infty } \mathcal{D}_{0}^{p}.
\end{equation*}
Here, each term in the expansion is defined successively from the previous term, i.e., $\mathcal{D}_{0}^{p} = \mathcal{D}_0 [\mathcal{D}_{0}^{p-1}]$. Therefore, the action of $\partial_{xx}$ on a generic function $v(x)$ is given by
\begin{equation}
    \label{eq:second derivative of v}
    \partial_{xx}v(x) = -\alpha^{2} \sum_{p = 1}^{ \infty } \mathcal{D}_{0}^{p}[v; \alpha](x).
\end{equation}
As previous noted, expressions in multiple spatial dimensions can be obtained by simply changing labels, e.g., $x$ to $y$.  

\subsection{First order derivative $\partial_{x}$}
\label{sec:first order derivative}
As with the last section, we will use the same basic idea to construct an approximation to $\partial_{x}$ that will allow us to build an integral representation that provides an up wind and down wind approximation to our operator.

In order to obtain a representation for the first derivative, we introduce two operators: $\mathcal{L}_{L}$ and $\mathcal{L}_{R}$ to account for waves traveling in different directions. The subscript on an operator is used to identify the direction associated with wave propagation, so that ``$L$" and ``$R$" correspond to downwinding and upwinding, respectively. The operands for this decomposition would, of course, come from a monotone splitting, depending on the problem. With this convention, we define 
\begin{align}
\label{eq:left and right L operators}
    \mathcal{L}_{L} = \mathcal{I} - \frac{1}{\alpha}\partial_{x}, \quad
    \mathcal{L}_{R} = \mathcal{I} + \frac{1}{\alpha}\partial_{x},  \quad x \in [a,b],
\end{align} 
where $\mathcal{I}$ is the identity operator and, again,  $\alpha$ is a strictly positive constant. Using an integrating factor, we can invert these operators, similar to the case for $\mathcal{L}_{0}$ to find that
\begin{subequations}
    \begin{align}
        \label{eq:left inverse}
        & \mathcal{L}_{L}^{-1}[v,\alpha](x) = I_{L}[v,\alpha](x) + B_{L}e^{-\alpha (b - x)}, \\
        \label{eq:right inverse}
        & \mathcal{L}_{R}^{-1}[v,\alpha](x) = I_{R}[v,\alpha](x) + 
    A_{R} e^{-\alpha (x - a)},
    \end{align}
\end{subequations}
where
\begin{subequations}
    \begin{align}
        \label{eq:IL}
        & I_{L}[v,\alpha](x) = \alpha \int_{x}^{b} e^{-\alpha (s-x)}v(s) \, ds, \\
        \label{eq:IR}
        & I_{R}[v,\alpha](x) = \alpha \int_{a}^{x} e^{-\alpha (x-s)}v(s) \, ds,
    \end{align}
\end{subequations}
with constant $A_{R}$ and $B_{L}$ being determined by the boundary condition imposed for the operators.  These expressions depend on the problem, so handling the general case requires a substantial amount of care.   


As with the second derivative operator, we introduce the operators
\begin{equation}	
    \label{eq:operD}
    \mathcal{D}_{L} = \mathcal{I} - \mathcal{L}^{-1}_{L}, \quad
    \mathcal{D}_{R} = \mathcal{I} - \mathcal{L}^{-1}_{R}, \quad x\in[a,b].
\end{equation}
and expand each of these into a Neumann series:
\begin{subequations}
	\label{eq:sum}
	\begin{align}
    	\label{eq:LL}
    	& \frac{1}{\alpha}\partial_{x}^{+} = \mathcal{I}-\mathcal{L}_{L}
    	= \mathcal{L}_{L} (\mathcal{L}^{-1}_{L}-\mathcal{I})
    	= -\mathcal{D}_{L}/(\mathcal{I} -\mathcal{D}_{L} )
    	= -\sum_{p=1}^{\infty}\mathcal{D}_{L}^{p}, \\
    	\label{eq:LR}
    	& \frac{1}{\alpha}\partial_{x}^{-} = \mathcal{L}_{R}-\mathcal{I}
    	= \mathcal{L}_{R} (\mathcal{I}-\mathcal{L}_{R}^{-1})
    	= \mathcal{D}_{R}/(\mathcal{I} -\mathcal{D}_{R} )
    	= \sum_{p=1}^{\infty}\mathcal{D}_{R}^{p}.
	\end{align}
\end{subequations}
As before, these operators are defined successively, but we leave the operand at each $p$ as a generic function $v(x)$. Moreover, the $\pm$ signs on the expressions for the derivatives in \eqref{eq:LL} and \eqref{eq:LR} do not reflect the direction of propagation. Instead, they represent the direction of approach at an interface. For example, we use $\partial_{x}^{+}$ to indicate the right-sided approximation of the derivative, in $x$, along an interface.

\subsection{Periodic boundary conditions}
\label{sec:periodic}
In this section we show how to impose periodic boundary conditions for the line by line MOL$^T$ formulation we leverage in this work.

For problems with periodic boundary conditions, we make the requirement that
\begin{equation}
    \label{eq:periodic BC requirement}
    \mathcal{D}_{L}^{p}[v;\alpha](a) =
    \mathcal{D}_{L}^{p}[v;\alpha](b), \quad 
    \mathcal{D}_{R}^{p}[v;\alpha](a) = 
    \mathcal{D}_{R}^{p}[v;\alpha](b), \quad 
    \mathcal{D}_{0}^{p}[v;\alpha](a) = 
    \mathcal{D}_{0}^{p}[v;\alpha](b), \quad p \geq 1.
\end{equation}
Using the definition of these operators \eqref{eq:operD} and \eqref{eq:D0 operator}, the above condition shows that at each level, we should select
\begin{equation}
    \label{eq:A and B for periodic problems}
    A_{R} = \frac{ I_{R}[v;\alpha](b) }{1 - \mu}, \quad B_{L} = \frac{ I_{L}[v;\alpha](a) }{1 - \mu}, \quad A_{0} = \frac{ I_{0}[v;\alpha](b) }{1 - \mu}, \quad B_{0} = \frac{ I_{0}[v;\alpha](a) }{1 - \mu},
\end{equation}
where $\mu = e^{-\alpha( b - a)}$. Hence, following the idea in \cite{christlieb2017kernel}, when $\phi$ is a periodic function, we can approximate the first derivative $\phi^{\pm}_{x}$ with (modified) partial sums in \eqref{eq:sum}, 
\begin{subequations}	
\label{eq:partialsum_per}
\begin{align}
	\phi_{x}^{+}(x)\approx \mP^{L}_{k}[\phi,\alpha](x)  = \left\{\begin{array}{ll}
	-\alpha\sum\limits_{p=1}^{k}\mD_{L}^{p}[\phi,\alpha](x), & k=1, 2,\\
	-\alpha\sum\limits_{p=1}^{k}\mD_{L}^{p}[\phi,\alpha](x) + \alpha\mD_{0}*\mD_{L}^2[\phi,\alpha](x), & k=3.\\
	\end{array}
	\right.
\end{align}
\begin{align}
	\phi_{x}^{-}(x)\approx \mP^{R}_{k}[\phi,\alpha](x) = \left\{\begin{array}{ll}
	\alpha\sum\limits_{p=1}^{k}\mD_{R}^{p}[\phi,\alpha](x), & k=1, 2,\\
	\alpha\sum\limits_{p=1}^{k}\mD_{R}^{p}[\phi,\alpha](x) - \alpha\mD_{0}*\mD_{R}^2[\phi,\alpha](x), & k=3,\\
	\end{array}
	\right.
\end{align}
\end{subequations}
Note that there is an extra term for $k = 3$. As remarked in  \cite{christlieb2017kernel}, such a term is needed for 
attaining unconditional stability of the scheme. An error estimate for the approximation \eqref{eq:partialsum_per} regarding the truncation of the infinite sum,  carried out in \cite{christlieb2017kernel},  showed that keeping $k$ terms of the partial sums led to 
\begin{subequations}
	\begin{align*}
	& \|\partial_{x}\phi(x)-\mP^{L}_{k}[\phi,\alpha](x) \|_{\infty}\leq C \left(\frac{1}{\alpha}\right)^{k} \|\partial_{x}^{k+1}\phi(x)\|_{\infty},\\
	& \|\partial_{x}\phi(x)-\mP^{R}_{k}[\phi,\alpha](x) \|_{\infty}\leq C \left(\frac{1}{\alpha}\right)^{k} \|\partial_{x}^{k+1}\phi(x)\|_{\infty}.
	\end{align*}
\end{subequations}
for the representation of the first derivative and
\begin{equation*}
    \|\partial_{xx}\phi(x) + \alpha^{2} \sum_{ p = 1}^{k} \mD_{0}^{p}[\phi,\alpha](x) \|_{\infty} \leq C \left( \frac{1}{\alpha} \right)^{2k} 
    \|\partial_{x}^{2k+1}\phi(x)\|_{\infty}
\end{equation*}
for the second derivative

In numerical simulations, we will take $\alpha =\beta/(c\Delta t)$ in \eqref{eq:partialsum_per}, with $c$ being the maximum wave propagation speed. Here, $\Delta t$ denotes the time step and $\beta$ is a constant independent of $\Delta t$. Hence, the partial sums approximate $\phi_{x}$ with accuracy $\mathcal{O}(\Delta t^k)$.

\subsection{Non-periodic boundary conditions}
\label{sec:non_periodic}

In this subsection, we will focus on the application of non-periodic boundary conditions to \eqref{eq:HJ}. Specific details for the error analysis, as well as more generic boundary conditions, can be found in our previous work on the H-J equation \cite{christlieb2019kernel}. 

For non-periodic problems, additional requirements imposed on the operators $\mathcal{D}_{*}$ need to be consistent with the boundary condition specified on $\phi$. Otherwise, this can lead to order reduction in the method. Using integration by parts, one can identify the source of the order reduction, which involves evaluations of $\phi$ and its derivatives, along the boundaries. To address this issue, the partial sums, as presented above, were modified to annihilate terms which resulted in the order reduction. Before introducing the modified partial sums, we specify certain requirements on the coefficients $A_{*}$ and $B_{*}$ used in the construction of a given operator $\mD_{*}$. 

\subsubsection{Conditions for $A_{*}$ and $B_{*}$}

For reconstructions of first derivatives, suppose that $C_{a}$ and $C_{b}$ are given numbers. We will explain, later, how these values are obtained. If we require
\begin{equation*}
    \mD_{R}[v,\alpha](a) = C_{a}, \quad \mD_{L}[v,\alpha](b) = C_{b},
\end{equation*}
then one can use the definition \eqref{eq:operD} to show that we should select
\begin{equation}
    \label{eq:fx_bc_dir}
    A_{R} = v(a) - C_{a}, \quad B_{L} = v(b) - C_{b}.
\end{equation}
As an example, suppose we wish to use a first order approximation of the first partial derivative, i.e.,
\begin{equation}
    \label{eq:Approx of phi_x at boundaries}
    \phi_{x}^{+} \approx -\alpha \mD_{L}[\phi; \alpha](x), \quad \phi_{x}^{-} \approx \alpha \mD_{R}[\phi; \alpha](x).
\end{equation}
According to an analysis of the truncation error, we should select
\begin{equation*}
    C_{a} \approx \frac{1}{\alpha}\phi_{x}(a), \quad C_{b} \approx -\frac{1}{\alpha}\phi_{x}(b),
\end{equation*}
to obtain a convergent approximation. The derivatives can be constructed using finite differences of a suitable order. 

The case for the second derivative is a bit more cumbersome, however, it works in essentially the same way. Again, if require
\begin{equation*}
    \mD_{0}[v,\alpha](a) = C_{a}, \quad \mD_{0}[v,\alpha](b) = C_{b},
\end{equation*}
where $C_{a}$ and $C_{b}$ are chosen to obtain appropriate approximation order, i.e.,
\begin{equation*}
    C_{a} \approx -\frac{1}{\alpha^2}\phi_{xx}(a), \quad C_{b} \approx -\frac{1}{\alpha^2}\phi_{xx}(b),
\end{equation*}
then the coefficients $A_{0}$ and $B_{0}$ are given as
\begin{subequations}
	\label{eq:gxx_bc_dir}
	\begin{align}
	& A_{0}=\frac{1}{1-\mu^2}\left( \mu\left(I_{0}[v,\gamma](b)-v(b)+C_{b}\right) - \left(I_{0}[v,\gamma](a)-v(a)+C_{a}\right)\right), \\
	& B_{0}=\frac{1}{1-\mu^2}\left( \mu\left(I_{0}[v,\gamma](a)-v(a)+C_{a}\right) - \left(I_{0}[v,\gamma](b)-v(b)+C_{b}\right)\right).
	\end{align}
\end{subequations}

The process of determining $C_{*}$ becomes difficult to generalize if more terms in the partial sums are required. Instead, another modification was proposed in  \cite{christlieb2019kernel}: Rather than specify the conditions for $A_{*}$ and $B_{*}$, the  partial  sums  were  modified so that boundary-related terms, which led to order reduction, were automatically removed.

\subsubsection{The modified partial sums}
\label{sec:modify}

In developing the modified sums for the first derivative, we assume that the derivatives of $\phi$ have been constructed, in some way, at the boundaries, i.e., $\partial_{x}^{m}\phi(a)$ and $\partial_{x}^{m}\phi(b)$, $m\geq1$. Using this information, the schemes presented in  \cite{christlieb2019kernel}, which address non-periodic boundary conditions, were, as follows:
\begin{subequations}
	\label{eq:partialsum_dir}
	\begin{align}
	\phi_{x}^{-}(x)\approx\widetilde{\mP}^{L}_{k}[\phi,\alpha](x)=\left\{\begin{array}{ll}
	\alpha\sum\limits_{p=1}^{k}\mathcal{D}_{L}[\phi_{1,p},\alpha](x), & k=1,\, 2,\\
	\alpha\sum\limits_{p=1}^{k}\mathcal{D}_{L}[\phi_{1,p},\alpha](x) -\alpha \mD_{0}[\phi_{1,3},\alpha](x), & k=3,\\
	\end{array}
	\right.
	\end{align}
	\begin{align}
	\phi_{x}^{+}(x)\approx\widetilde{\mP}^{R}_{k}[\phi,\alpha](x)=\left\{\begin{array}{ll} -\alpha\sum\limits_{p=1}^{k}\mathcal{D}_{R}[\phi_{2,p},\alpha](x),& k=1,\, 2,\\
	-\alpha\sum\limits_{p=1}^{k}\mathcal{D}_{R}[\phi_{2,p},\alpha](x) +\alpha \mD_{0}[\phi_{2,3},\alpha](x), & k=3.\\
	\end{array}
	\right.
	\end{align}
\end{subequations}
And $\phi_{1,p}$ and $\phi_{2,p}$ are given as
\begin{subequations}
	\label{eq:expression}
	\begin{align}
	& \left\{\begin{array}{ll}
	\phi_{1,1}=\phi,\\
	\displaystyle \phi_{1,2}=\mathcal{D}_{L}[\phi_{1,1},\alpha] - \sum_{m=2}^{k}\left(-\frac{1}{\alpha}\right)^{m} \partial_{x}^{m}\phi(a) e^{-\alpha(x-a)},\\
	\displaystyle \phi_{1,3}=\mathcal{D}_{L}[\phi_{1,2},\alpha] + \sum_{m=2}^{k}(m-1)\left(-\frac{1}{\alpha}\right)^{m} \partial_{x}^{m}\phi(a) e^{-\alpha(x-a)},\\
	\end{array}
	\right.\\
	& \left\{\begin{array}{ll}
	\phi_{2,1}=\phi, \\
	\displaystyle \phi_{2,2}=\mathcal{D}_{R}[\phi_{2,1},\alpha] - \sum_{m=2}^{k}\left(\frac{1}{\alpha}\right)^{m} \partial_{x}^{m}\phi(b) e^{-\alpha(b-x)}, \\
	\displaystyle \phi_{2,3}=\mathcal{D}_{R}[\phi_{2,2},\alpha] + \sum_{m=2}^{k}(m-1)\left(\frac{1}{\alpha}\right)^{m} \partial_{x}^{m}\phi(b) e^{-\alpha(b-x)},\\
	\end{array}
	\right.
	\end{align}
\end{subequations}
with the boundary conditions for the operators
\begin{align*}
& \alpha\mathcal{D}_{L}[\phi_{1,1},\alpha](a)=\phi_{x}(a), \quad
\alpha\mathcal{D}_{R}[\phi_{2,1},\alpha](b)=-\phi_{x}(b),\\
& \alpha\mathcal{D}_{L}[\phi_{1,p},\alpha](a)=\alpha\mathcal{D}_{R}[\phi_{2,p},\alpha](b)= 0,
\quad \text{for} \ p\geq2,\\
& \alpha\mD_{0}[\phi_{*,3},\alpha](a) = \alpha\mD_{0}[\phi_{*,3},\alpha](b)=0, \quad \text{$*$ could be 1 or 2.}
\end{align*} 
The modified partial sum \eqref{eq:partialsum_dir} is constructed so that it agrees with the derivative values at the boundary, to preserve consistency with the boundary condition imposed on $\phi$. Furthermore, the authors provided the following theorem, which verifies the accuracy of these modified sums:
\begin{thm}
	Suppose $\phi\in\mathcal{C}^{k+1}[a,b]$, $k=1, \, 2,\, 3$. Then, the modified partial sums \eqref{eq:partialsum_dir} satisfy
	\begin{subequations}
		\begin{align}
		& \|\partial_{x}\phi(x)-\widetilde{\mP}^{L}_{k}[\phi,\alpha](x)\|_{\infty}\leq C \left(\frac{1}{\alpha}\right)^{k} \|\partial_{x}^{k+1}\phi(x)\|_{\infty},\\
		& \|\partial_{x}\phi(x)-\widetilde{\mP}^{R}_{k}[\phi,\alpha](x)\|_{\infty}\leq C \left(\frac{1}{\alpha}\right)^{k} \|\partial_{x}^{k+1}\phi(x)\|_{\infty}.
		\end{align}
	\end{subequations}
\end{thm}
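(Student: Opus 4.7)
The plan is to prove the $L$ estimate (the $R$ estimate follows by the symmetric argument $x \mapsto a+b-x$) by induction on $k$. The central tool is repeated integration by parts on the convolution integrals $I_L$ and $I_0$, which converts powers of $\mathcal{D}_L$ acting on $\phi$ into multiples of $\partial_x^p\phi$ (the desired leading term) plus boundary remainders proportional to $e^{-\alpha(x-a)}$ and $e^{-\alpha(b-x)}$. The role of the modification \eqref{eq:expression} is precisely to cancel those boundary remainders that would otherwise spoil the $O(\alpha^{-k})$ rate in a non-periodic setting.

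First I would establish the elementary identity, obtained by one integration by parts on \eqref{eq:IL},
\begin{equation*}
\mathcal{D}_L[\phi,\alpha](x) = -\tfrac{1}{\alpha}\, I_L[\partial_x\phi,\alpha](x) + \bigl(\phi(b)-B_L\bigr)e^{-\alpha(b-x)}.
\end{equation*}
Since $\alpha\int_x^b e^{-\alpha(s-x)}\,ds \leq 1$, the operator $I_L$ is an $L^\infty$-contraction, so imposing the boundary requirement $\alpha\mathcal{D}_L[\phi_{1,1},\alpha](a)=\phi_x(a)$ and evaluating the identity at $x=a$ fixes $B_L$ and immediately yields $\|\alpha\mathcal{D}_L[\phi,\alpha] - \partial_x\phi\|_\infty \leq C\alpha^{-1}\|\partial_x^2\phi\|_\infty$, which is the case $k=1$.

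For $k=2$, I would iterate: two integrations by parts on $I_L[\mathcal{D}_L[\phi]]$ produce $\alpha^{-2} I_L[\partial_x^2\phi]$ plus spurious terms of the form $(-1/\alpha)^m \partial_x^m\phi(a)\,e^{-\alpha(x-a)}$, $m=2$, which are only $O(\alpha^{-2})$ in $L^\infty$ and, if kept, pollute subsequent iterations. The definition of $\phi_{1,2}$ in \eqref{eq:expression} subtracts exactly these terms before the next application of $\mathcal{D}_L$, and the boundary condition $\alpha\mathcal{D}_L[\phi_{1,p},\alpha](a)=0$ for $p\geq 2$ is consistent with this subtraction because each exponential $e^{-\alpha(x-a)}$ lies in the kernel of $\mathcal{L}_L$. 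I would verify by direct substitution that $\alpha\sum_{p=1}^{2}\mathcal{D}_L[\phi_{1,p},\alpha](x) = -\partial_x\phi(x) + R_2(x)$ with $\|R_2\|_\infty \leq C\alpha^{-2}\|\partial_x^3\phi\|_\infty$, completing $k=2$.

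For $k=3$, the analogous iteration leaves a leftover boundary contribution whose decay profile is two-sided (active near both $a$ and $b$) and therefore cannot be cleaned up by $\mathcal{D}_L$ alone; this is exactly why the extra term $-\alpha\mathcal{D}_0[\phi_{1,3},\alpha]$ is inserted. Running the same integration-by-parts machinery on $\mathcal{D}_0$ (now using \eqref{eq: Second derivative convolution integral} and the boundary condition $\alpha\mathcal{D}_0[\phi_{*,3},\alpha](a)=\alpha\mathcal{D}_0[\phi_{*,3},\alpha](b)=0$) gives $\alpha^2\mathcal{D}_0[\phi,\alpha] = -\partial_x^2\phi + O(\alpha^{-2})$, and the peculiar factor $(m-1)$ in the definition of $\phi_{1,3}$ is chosen so that, when combined with the contribution from the already-modified $\phi_{1,2}$, every boundary-layer term multiplying $\partial_x^m\phi(a)$ cancels identically, leaving only a genuine remainder bounded by $C\alpha^{-3}\|\partial_x^4\phi\|_\infty$. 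The main obstacle is the bookkeeping in this last step: one must track all $e^{-\alpha(x-a)}$ terms generated across three levels of iteration and verify that the subtractions prescribed by \eqref{eq:expression} together with the boundary conditions make exactly the right cancellations; once that is done, bounding $I_L[\partial_x^{k+1}\phi]$ and $I_0[\partial_x^{k+1}\phi]$ by $\|\partial_x^{k+1}\phi\|_\infty$ is immediate.
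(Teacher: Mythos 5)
Your plan is, in substance, the proof strategy of \cite{christlieb2019kernel} --- note that the present paper does not reprove this theorem but quotes it from that reference, so the comparison is against the original argument: one integration by parts per level to trade $\mD_{*}$ for $\tfrac{1}{\alpha}I_{*}[\partial_{x}\cdot]$ plus a homogeneous exponential, the $L^\infty$-bound $\|I_{*}[w]\|_{\infty}\le\|w\|_{\infty}$, and exact cancellation of the boundary-layer terms by the subtractions in \eqref{eq:expression}. Your $k=2$ mechanism checks out: with zero boundary data for $p\ge 2$ one gets $\alpha\mD[\phi_{1,1}]=\partial_{x}\phi-\tfrac{1}{\alpha}I_{R}[\partial_{x}^{2}\phi]$, the next application of $\mD$ generates the secular mode $\phi_{xx}(a)\,(x-a)e^{-\alpha(x-a)}$ (since $I_{R}$ applied to its own homogeneous solution produces $\alpha(x-a)e^{-\alpha(x-a)}$, which is only $\mathcal{O}(\alpha^{-1})$ in sup norm), and the $m=2$ subtraction in $\phi_{1,2}$ cancels it identically, leaving $\partial_{x}\phi-\tfrac{1}{\alpha^{2}}I_{R}^{2}[\partial_{x}^{3}\phi]$; this secular mode is also precisely where the factors $(m-1)$ at the next level come from.

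There is, however, one step that fails as literally written. In your $k=1$ paragraph you pair the Section~2.2 operator $\mD_{L}$ --- convolution $I_{L}$ toward $b$ as in \eqref{eq:IL}, homogeneous mode $e^{-\alpha(b-x)}$, free constant $B_{L}$ --- with a boundary requirement imposed at $x=a$. Evaluating your identity at $x=a$ forces
\begin{equation*}
\alpha\bigl(\phi(b)-B_{L}\bigr)=\bigl(\phi_{x}(a)+I_{L}[\partial_{x}\phi,\alpha](a)\bigr)\,e^{\alpha(b-a)},
\end{equation*}
and since $\phi_{x}(a)+I_{L}[\partial_{x}\phi,\alpha](a)=2\phi_{x}(a)+\mathcal{O}(\alpha^{-1})$ is generically $\mathcal{O}(1)$, the homogeneous term in $\alpha\mD_{L}[\phi,\alpha](x)$ equals $\bigl(2\phi_{x}(a)+\mathcal{O}(\alpha^{-1})\bigr)e^{\alpha(x-a)}$, exponentially large at $x=b$: no $\mathcal{O}(\alpha^{-1})$ bound can follow. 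Two related slips confirm the diagnosis: $e^{-\alpha(x-a)}$ lies in the kernel of $\mathcal{L}_{R}$, not of $\mathcal{L}_{L}$ as you assert, and your asymptotics flip sign between $k=1$ (where you claim $\alpha\mD_{L}\approx+\partial_{x}\phi$) and $k=2$ (where you write $\approx-\partial_{x}\phi$, which would make the theorem's left-hand side $\mathcal{O}(1)$). All of this traces to the fact that \eqref{eq:partialsum_dir}--\eqref{eq:expression} silently swap the labels of Section~2.2: the operator there called $\mD_{L}$ --- conditioned at $a$, with corrections $\propto e^{-\alpha(x-a)}$ built from the data $\partial_{x}^{m}\phi(a)$ --- is the one generated by $I_{R}$ in \eqref{eq:IR}, whose homogeneous mode is $e^{-\alpha(x-a)}$ and for which a condition at $a$ is natural. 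With that pairing, $\alpha(\phi(a)-A_{R})=\phi_{x}(a)$ cancels the boundary term exactly, giving $\alpha\mD[\phi]=\partial_{x}\phi-\tfrac{1}{\alpha}I_{R}[\partial_{x}^{2}\phi]$ and the $k=1$ bound with $C=1$, after which your $k=2$ computation goes through with the sign $+\partial_{x}\phi$. Finally, for $k=3$ you assert rather than verify the $(m-1)$ cancellation and the $\mD_{0}$ estimate; since that bookkeeping is the entire substance of the hardest case, the proposal is a correct plan but not yet a complete proof there.
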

Recalling that we defined $\alpha=\beta/(\alpha\Delta t)$ shows that the modified partial sums \eqref{eq:partialsum_dir} approximate $\phi_{x}$ with accuracy $\mathcal{O}(\Delta t^k)$.

\section{Extensions of the scheme to nonuniform grids}

In this section we describe the extension of the method to mapped grid.  Section \ref{sec:map} reviews the fact that the H-J equation under a coordinate transformation yields yet another H-J equation.  It is this fact allows us to develop systematic approach to solving the H-J equation on mapped and non-mapped grids.  In Section 3.2, we develop exponential WENO kernel based operators that we use in the MOL$^T$ approximation to the H-J equation on mapped grids.   In Section 3.3 we outline the MOL$^T$ algorithm on mapped grids formulation of our H-J solver.

\subsection{Problem description on the physical domain}\label{sec:map}
In the one-dimensional case, \eqref{eq:HJ} becomes
\begin{align}\label{eq:1D}
\phi_{t}+ H(\phi_{x})=0, \quad a\leq x \leq b,
\end{align}
with $\phi(x,0)=\phi^{0}(x)$.
Assume that the spatial domain is a closed interval $[a,b]$ and partitioned with $N+1$ points
\begin{align*}
a=x_{0}<x_{1}<\cdots<x_{N-1} <x_{N} =b,
\end{align*}
with $\Delta x_{i}=x_{i+1}-x_{i}$ for $i=0,\cdots,N-1$. These grids of the physical domain could be nonuniform. 
Let $\phi_{i}(t)$ denote the solution $\phi(x_{i},t)$ at mesh point $x_i$ for $i=0,\cdots,N$. We start with the transformation from the physical domain to the computational domain. Let $\xi$ be the uniformly distributed coordinates on our computational domain $[0,1]$:
$$ 0= \xi_{0} < \xi_{1} < \cdots < \xi_{N-1} < \xi_{N} = 1, $$
so that $\xi_{i}=i\Delta \xi$ with $\Delta \xi = 1/N$, and define a one-to-one coordinate transformation by $$ x = x(\xi) : [0,1] \rightarrow [a,b], $$ with $ x(\xi_i) = x_i $, $ x(0)=a $ and $ x(1) = b $. With this transformation, we can convert the one-dimensional H-J equation \eqref{eq:1D} to a new H-J equation
\begin{align}\label{eq:1Dxi}
\phi_{t}+ \tilde{H}(\phi_{\xi})=0, \quad 0 \leq \xi \leq 1,
\end{align}
where
\begin{align}\label{transf:H}
\tilde{H}(\phi_{\xi}):=H(\xi_x\phi_{\xi}).
\end{align}

The proposed numerical scheme on the transformed spatial domain is developed according to the semi-discrete equation
\begin{align}\label{eq:1Dscheme}
\frac{d}{dt}\phi_{i}(t)+\hat{\tilde H}(\phi_{\xi,i}^{-},\phi_{\xi,i}^{+}) =0,\quad i=0,\ldots,N,
\end{align}
where $\hat{\tilde H}$ is a numerical Hamiltonian which is a Lipschitz continuous monotone flux consistent with $\tilde H$, i.e.,
$$\hat{\tilde H}(u,u)=\tilde H(u).$$
Here $\phi_{\xi,i}^{-}$ and $\phi_{\xi,i}^{+}$ are the approximations to $\phi_{\xi}$ at $\xi_{i}$ obtained by left-biased and right-biased methods, respectively, to take into the account the direction of characteristics propagation of the H-J equation.
In this work, the local Lax-Friedrichs flux
\begin{align}\label{eq:LLF}
\hat{\tilde H}(u^{-},u^{+})=\tilde H(\frac{u^{-}+u^{+}}{2}) -\alpha_{\tilde{H}}(u^{-},u^{+})\frac{u^{+}-u^{-}}{2}
\end{align}
is used with $\alpha_{\tilde{H}}(u^{-},u^{+})=\max_u |\tilde H'(u)|$ where $u \in [\min(u^{-},u^{+}), \max(u^{-},u^{+})]$.

\begin{lem}\label{lem0}
	The numerical scheme for \eqref{eq:1Dscheme} with \eqref{eq:LLF} is conservative in terms of $\phi_x$.
\end{lem}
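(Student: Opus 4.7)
My plan is to show conservativity by differentiating the semidiscrete scheme across adjacent nodes and identifying the resulting equation as a discrete conservation law for a finite-difference approximation to $\phi_\xi$ (and equivalently to $\phi_x$, via the fixed coordinate map). The argument is classical in the H-J setting, in the spirit of Osher--Shu; the only issue specific to the present paper is checking that the nonuniformity of the physical grid does not break the flux form.

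First, I would abbreviate $\hat{\tilde H}_i(t) := \hat{\tilde H}(\phi^{-}_{\xi,i}(t),\phi^{+}_{\xi,i}(t))$, so that \eqref{eq:1Dscheme} reads $\dot\phi_i(t) = -\hat{\tilde H}_i(t)$ at every grid point. Introducing the interface-centered finite difference
\[
u_{i+1/2}(t) := \frac{\phi_{i+1}(t)-\phi_i(t)}{\Delta\xi},
\]
which is a consistent approximation to $\phi_\xi(\xi_{i+1/2},t)$ on the uniform computational grid, and subtracting \eqref{eq:1Dscheme} at node $i$ from \eqref{eq:1Dscheme} at node $i+1$, then dividing by $\Delta\xi$, one obtains
\[
\frac{d u_{i+1/2}}{dt} + \frac{\hat{\tilde H}_{i+1} - \hat{\tilde H}_i}{\Delta\xi} = 0.
\]
This is precisely the flux form of a semidiscrete conservation law, with numerical flux $\hat F_j := \hat{\tilde H}_j$ stored at the nodes. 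The consistency assumption $\hat{\tilde H}(u,u)=\tilde H(u)$ then translates directly into consistency of $\hat F$ with the physical flux $\tilde H(\phi_\xi)$ of the scalar conservation law $(\phi_\xi)_t + \tilde H(\phi_\xi)_\xi = 0$ obtained by differentiating \eqref{eq:1Dxi} in $\xi$.

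To return to the physical variable, I would define $w_{i+1/2}(t) := (\phi_{i+1}-\phi_i)/\Delta x_i$, the natural centered approximation to $\phi_x$. Since $\Delta x_i/\Delta\xi$ is a time-independent metric factor determined solely by the mapping $x(\xi)$, the same calculation gives
\[
\frac{d w_{i+1/2}}{dt} + \frac{\hat{\tilde H}_{i+1} - \hat{\tilde H}_i}{\Delta x_i} = 0,
\]
so that summing across cells produces a telescoping discrete identity for $\phi_x$, which is the desired conservativity statement.

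The entire argument is essentially bookkeeping; no nontrivial estimates are required. The only step that needs a little care is the identification of the node-indexed Hamiltonian $\hat{\tilde H}_i$ as the interface numerical flux in the derived conservation law for the cell-centered differences $u_{i+1/2}$, and the observation that the nonuniformity of the physical grid enters only through the fixed, time-independent metric $\Delta x_i/\Delta\xi$ and thus does not disturb the flux form. I do not anticipate any significant obstacle beyond this verification.
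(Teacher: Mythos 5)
Your skeleton is the same as the paper's: the paper likewise differences the update at nodes $i+1$ and $i$, forms the cell quantity $\Phi_i=(\phi_{i+1}-\phi_i)/\Delta x_i$, and reads off a flux-form update $\frac{\Phi_i^{n+1}-\Phi_i^{n}}{\Delta t}=-\frac{1}{\Delta x_i}\bigl[\hat{F}_{i+1}-\hat{F}_{i}\bigr]$ (it does this for the Euler-discretized scheme rather than the semidiscrete one, which is immaterial). But your proposal stops short of the one step that carries the actual content of the lemma, and your closing observation --- that the nonuniformity ``enters only through the fixed, time-independent metric $\Delta x_i/\Delta\xi$'' --- is where the gap sits. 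The Jacobian $J=x_\xi$ enters in a second, nontrivial place: through the \emph{arguments} of the numerical Hamiltonian, since $\phi^{\pm}_{\xi,i}=J_i\,\phi^{\pm}_{x,i}$, and through the dissipation coefficient $\alpha_{\tilde H}$ in \eqref{eq:LLF}. Your telescoped identity therefore exhibits a node flux $\hat F_i=\hat{\tilde H}(J_i\phi^{-}_{x,i},\,J_i\phi^{+}_{x,i})$ that you have verified to be consistent only with $\tilde H$ as a function of $\phi_\xi$-values. ``Conservative in terms of $\phi_x$'' in the sense the lemma intends (and the paper proves) means that the induced scheme for $\Phi\approx\phi_x$ is a conservative, \emph{consistent} approximation to the physical conservation law $\Phi_t+H(\Phi)_x=0$, i.e.\ that a single flux function consistent with $H$, as a function of the physical derivative values, can be identified at every interface. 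Telescoping alone is the weaker statement and does not deliver this.

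The missing verification is short but not vacuous, and it is precisely the computation the paper's proof centers on. From \eqref{transf:H}, $\tilde H(v)=H(\xi_x v)=H(v/J)$, so the central term transforms correctly: $\tilde H\bigl(J_i\tfrac{u^-+u^+}{2}\bigr)=H\bigl(\tfrac{u^-+u^+}{2}\bigr)$. For the dissipation term, the chain rule gives $H'(u)=J\,\tilde H'(Ju)$, hence $\alpha_H(u^-,u^+):=\max_u|H'(u)|=J\,\alpha_{\tilde H}(Ju^-,Ju^+)$, which exactly absorbs the factor $J_i$ produced by $\phi^{\pm}_{\xi,i}=J_i\phi^{\pm}_{x,i}$ in $\frac{u^+-u^-}{2}$. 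Consequently
\begin{equation*}
\hat{\tilde H}(J_iu^-,J_iu^+)=H\Bigl(\frac{u^-+u^+}{2}\Bigr)-\alpha_H(u^-,u^+)\,\frac{u^+-u^-}{2}=:\hat H(u^-,u^+),
\end{equation*}
the physical local Lax--Friedrichs flux, monotone and consistent with $H$; note in particular that the residual $i$-dependence through $J_i$ cancels, so one and the same flux function $\hat H$ serves at every interface --- without this cancellation your scheme for $w_{i+1/2}$ would not be in standard conservation form on the nonuniform grid. With this identification inserted after your telescoping step, your argument is complete and coincides with the paper's proof.
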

\begin{proof}
	The equation \eqref{eq:1Dscheme} can be discretized with $n$-th timestep $\Delta t$ by
	\begin{align}\label{numeq:1d}
	\phi_{i}^{n+1} = \phi_{i}^{n} - \Delta t \hat{\tilde H}(\phi_{\xi,i}^{-},\phi_{\xi,i}^{+})
	\end{align}
	where $\phi^{n}$ denotes the semi-discrete solution at $t^{n}$.
	Then it can be proved easily following from the fact that the scheme for \eqref{numeq:1d} approximates hyperbolic conservation laws. First, we define a function $\Phi(x,t)$, which satisfies
	\begin{align*}
	\Phi(x_i)=\frac{1}{\Delta x_i}\int_{x_i}^{x_{i+1}} \phi_x =\frac{\phi_{i+1}-\phi_i}{\Delta x_i}, \quad i=0,\cdots,N-1, 
	\end{align*}
	and consider the time evolution of this function.
	Let $\Phi_i^n$ be the value of $\Phi$ at $x_i$ at the $n$-th timestep $t^n$
	and so we obtain that
	\begin{align}\label{def:Phi}
	\begin{split}
		\frac{\Phi_{i}^{n+1}-\Phi_{i}^{n}}{\Delta t} 
	&=\frac{1}{\Delta x_i} \left[ \frac{\phi_{i+1}^{n+1}-\phi_{i+1}^{n}}{\Delta t} - \frac{\phi_{i}^{n+1}-\phi_i^n}{\Delta t} \right].
	\end{split}
	\end{align}
	Denoting the Jacobian of the coordinate transformation $J=x_{\xi}$, we can find the relation
$	\phi_{\xi,i}^{\pm}=J_i \phi_{x,i}^{\pm}$
	where $J_i = J|_{\xi_i}$,
	and using this relation with \eqref{transf:H} and \eqref{eq:LLF}, the equation \eqref{numeq:1d} is converted to
	\begin{align*}
	\phi_{i}^{n+1}
	&=\phi_{i}^{n} - \Delta t \hat {\tilde H}(J_i\phi^-_{x,i},J_i\phi^+_{x,i}) \\
	&=\phi_{i}^{n} - \Delta t \left[ H \left( \frac{\phi_{x,i}^{-} + \phi_{x,i}^{+}}{2} \right) -\alpha_{\tilde{H}} \frac{J_i\phi_{x,i}^{+}-J_i\phi_{x,i}^{-}}{2} \right]
	\end{align*}
	with $\alpha_{\tilde{H}}=  \alpha_{\tilde{H}}(\phi_{\xi,i}^{-},\phi_{\xi,i}^{+})=\max |\tilde H'(\phi_{\xi,i})|$.
	Using this relation, in addition to \eqref{def:Phi}, we obtain
	\begin{align*}
	\frac{\Phi_{i}^{n+1}-\Phi_{i}^{n}}{\Delta t} 
	= \frac{1}{\Delta x_i}\left[ -\left\lbrace H \left( \frac{\phi_{x,i+1}^{-} + \phi_{x,i+1}^{+}}{2} \right) - \alpha_{\tilde{H}} J_{i+1}\frac{\phi_{x,i+1}^{+}-\phi_{x,i+1}^{-}}{2} \right\rbrace + \left\lbrace
	 H \left( \frac{\phi_{x,i}^{-} + \phi_{x,i}^{+}}{2} \right) -\alpha_{\tilde{H}} J_i\frac{\phi_{x,i}^{+}-\phi_{x,i}^{-}}{2} \right\rbrace \right],
	\end{align*}
    which is an update equation of the form
	\begin{align*}
	\frac{\Phi_{i}^{n+1}-\Phi_{i}^{n}}{\Delta t}
	= -  \frac{1}{\Delta x_i}\left[ \hat{H}( \phi_{x,i+1}^{-} , \phi_{x,i+1}^{+}) - \hat{H}( \phi_{x,i}^{-}, \phi_{x,i}^{+})\right].
	\end{align*}
	We identify $$\hat{H}(u^-, u^+)= H \left( \frac{u^{-} + u^{+}}{2} \right) - { \alpha}_{H}(u^-, u^+) \frac{u^{+}-u^{-}}{2}, $$
	with the relation
	$${\alpha_{H}}(u^-, u^+)=\max_u | H'(u)|=\max_u | \tilde{H}'(J u)|=J \alpha_{\tilde{H}}$$
	as the monotone numerical Hamiltonian, which is consistent with $H$, i.e.,
	$$\hat{H}(u,u)=H(u).$$ In other words, this is a conservative approximation to the hyperbolic conservation law $$ \Phi_t+H(\Phi)_x = 0. $$
\end{proof}

\subsection{Space discretization with exponential based WENO schemes}\label{S-wenone}
In this subsection, we present the detailed spatial discretization for the operators $\mathcal{D}_{L}$ and $\mathcal{D}_{R}$.
We can obtain \eqref{eq:IL} and \eqref{eq:IR} via recurrence relations for the integral terms:
\begin{align}
    &I_{R}[v; \alpha](x_{i}) = e^{-\alpha \Delta x_{i-1}} I_{R}[v; \alpha](x_{i-1}) +  J_{R}[v; \alpha](x_{i}), \label{eq:right-sweep}\\
    &I_{L}[v; \alpha](x_{i}) = e^{-\alpha \Delta x_{i}} I_{L}[v; \alpha](x_{i+1}) + J_{L}[v; \alpha](x_{i}), \label{eq:left-sweep}
\end{align}
where the local integrals are defined by
\begin{align}
\label{eq:JR}
J_{R}[v; \alpha](x_{i}) &= \alpha \int_{x_{i-1}}^{x_{i}} e^{-\alpha(x_{i} - s)} v(s) \,ds\\
\label{eq:JL}
J_{L}[v; \alpha](x_{i}) &= \alpha \int_{x_{i}}^{x_{i+1}} e^{-\alpha(s-x_{i})} v(s) \,ds.
\end{align}
By calculating the convolution integrals with this recurrence relation, we obtain a summation method which has a complexity of $\mathcal{O}(N)$ instead of $\mathcal{O}(N^2)$. 

In order to compute $J_{L}[v; \alpha](x_{i})$ and  $J_{R}[v; \alpha](x_{i})$, we propose a high order exponential based approximation. The process to approximate $J_{L}[v; \alpha](x_{i})$ is simply mirror symmetric to that of $J_{R}[v; \alpha](x_{i})$ with respect to point $x_{i}$, so we will illustrate the process only for the term $J_{R}[v; \alpha](x_{i})$. In \cite{HKYY}, the authors introduced a sixth-order WENO scheme based on the exponential polynomial space, which we shall follow for approximating $J_{R}[v; \alpha](x_{i})$. To begin, we consider an interpolation stencil consisting of $k+1$ points, which contains $x_{i-1}$ and $x_{i}$:
\begin{equation*}
S(i) = \{ x_{i-r}, \cdots, x_{i-r+k} \},
\end{equation*}
to find a unique polynomial of degree at most $k$, denoted as $p(x)$, which interpolates $v(x)$ at the points in $S(i)$ so that
\begin{equation*}
J_{R}[v; \alpha](x_{i}) \approx \alpha \int_{x_{i-1}}^{x_{i}} e^{-\alpha(x_{i} - s)} p(s) \,ds.
\end{equation*}
In this paper, a six-point stencil $S:=S(i)=\{x_{i-3},\cdots, x_{i+2}\}$ is used and this stencil is subdivided into three substencils $S_0, \cdots, S_2$ defined by $S_r:=S_{r}(i) =\{ x_{i-3+r},\cdots, x_{i+r}\}$ for $r = 0, 1$ and $2$.
The corresponding stencil is shown in Figure \ref{Fig0}.

\begin{figure}
	\centering
	\includegraphics[width=0.5\textwidth]{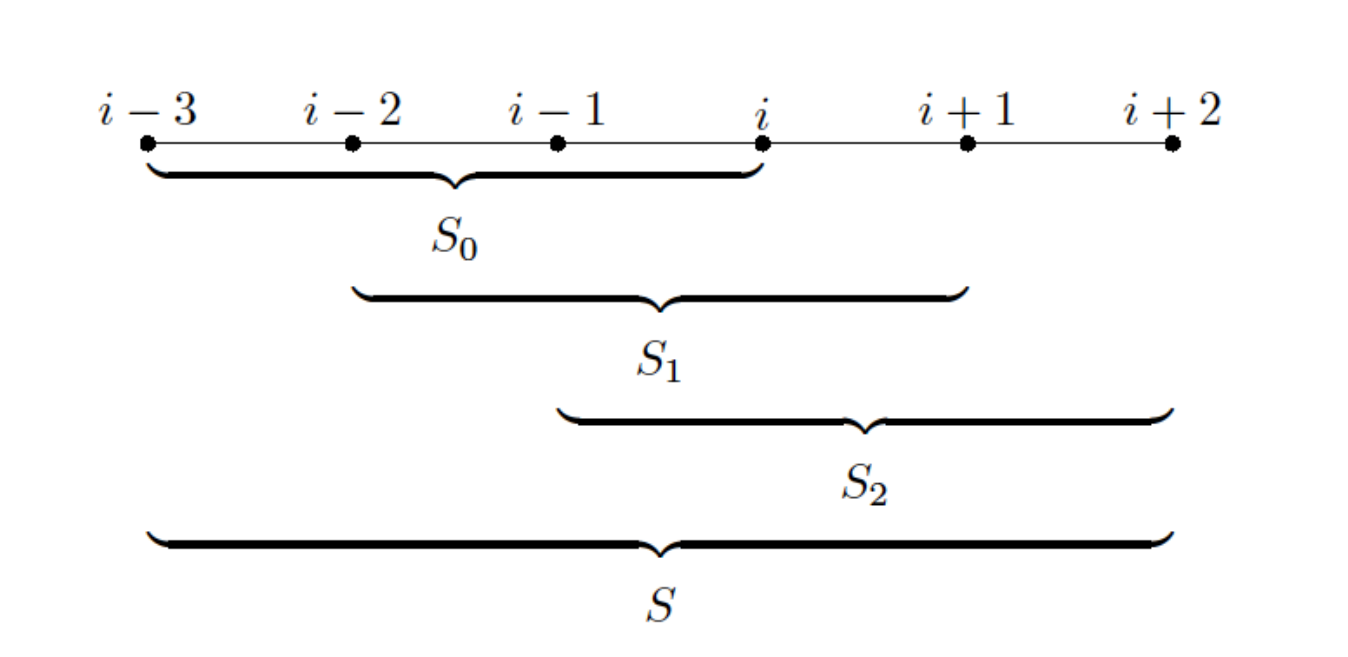}
	\caption{\em The $6$-point stencil $S$ with three $4$-point stencils $S_r$, $r=0,1$ and $2$.}
	\label{Fig0}
\end{figure}

Let $\{\phi_1,\cdots,\phi_k\}$ be a set of exponential polynomials of the form 
$\phi(x) =  x^n e^{\lambda x} $
with  $n\in\NN\cup \{0\}$ and a ``tension'' parameter {$\lambda$} which is used as chosen to improve the approximating ability according to the data. We can choose $\lambda\in\RR$ or $\lambda\in i\RR$ ($i= \sqrt{-1}$) and the function $\phi$ could be a trigonometric polynomial if $\lambda$ is in $i\RR$.
With these exponential polynomials as basis functions, we define a rank $k$ space $\Gamma_k$ by 
\begin{equation*}
    \Gamma_k :={\rm span} \{\phi_1 , \ldots, \phi_k \},
\end{equation*}
which satisfies
\begin{equation}
    \label{nonsing}
    \det (\phi_n(s_j): j, n = 1,\ldots, k )  \not= 0
\end{equation}
for a $k$-point stencil $\{s_j:j=1,\ldots,k\}$. It is recommended that the polynomial $\phi(x) \equiv 1$ is contained as a basis in order for an interpolation kernel to satisfy a partition of unity, so we choose
\begin{equation}
    \label{bigsp}
    \Gamma_6:={\rm span} \{1, x, x^2, x^3, e^{\lambda x}, e^{-\lambda x}\}
\end{equation}
as the basis functions for global stencil $S$ and similarly, 
\begin{equation}
    \label{subsp}
    \Gamma_4:={\rm span} \{1, x, e^{\lambda x}, e^{-\lambda x}\}
\end{equation}
for the four-point substencils.
Here $\Gamma_6$ and $\Gamma_4$ constitute {\em extended Tchebysheff system}s on $\RR$ so that the non-singularity of the interpolation matrices in \eqref{nonsing} is guaranteed, see \cite{KSbook,HKYY}.

On the big stencil $S(i)$, using the basis defined on $\Gamma_6$, we obtain the approximation as
	\begin{align}\label{eq:gloJR}
	\begin{split}
	J^{R}_{i} &:=\alpha \int_{x_{i-1}}^{x_{i}}e^{-\alpha(x_{i} - s)} p(s) \,ds,
	\end{split}
	\end{align}
	where $p$ is an interpolant for $v$ that satisfies
	$ J^{R}_{i} = 	J_{R}[v; \alpha](x_{i}) + \alpha \mO(\Delta x^6)$ if $v$ is smooth on $S(i)$. Similarly, on each of the smaller stencils, we have
	\begin{align}\label{eq:localJR}
	\begin{split}
	J^{R}_{i,r} :=\alpha \int_{x_{i-1}}^{x_{i}}e^{-\alpha(x_{i} - s)} p_r(s) \,ds,
	\end{split}
	\end{align}
	where $p_{r}$ is the interpolant to $v$ using the basis from $\Gamma_4$ on nodes $S_{r}(i)$ which satisfies $J^{R}_{i,r} = 	J_{R}[v; \alpha](x_{i}) + \alpha \mO(\Delta x^4) $ for smooth $v$. When the function $v$ is smooth, we would like to combine approximations on the smaller stencils $S_{r}(i)$ so they are consistent with those on the larger stencil $S(i)$, i.e.,
	\begin{equation}\label{weno:dr}
	J^{R}_{i} =\sum_{r=0}^{2}d_{r}J^{R}_{i,r}.	
	\end{equation}
	The coefficients $d_{r}$ are the linear weights which satisfy $\sum_{r=0}^{2}d_{r}=1$.
	
The construction of smoothness indicators is as follows:
	First, for $r=0,1,2$, we use $m$th-order generalized undivided differences ($m=2,3$) on $S_r$ defined by
	\begin{align}\label{weno:unDD}
	D^m_r v_{i} := \sum_{x_n \in S_{r}(i)}  c_{r,n}^{[m]} v(x_{n}),
	\end{align}
    which converge to $\Delta x^m v^{(m)}(x_{i})$ in higher convergence rate than classical undivided differences.
	Let $n_r$ indicate the number of points inside the stencil $S_r$ and define the coefficient vector ${{\bf c}}^{[m]}:= (c^{[m]}_{r,n}: x_n \in S_r)^T $ in \eqref{weno:unDD}
	by solving the linear system
	\begin{align*}
	{\bf V}\cdot {{\bf c}}^{[m]}= {\boldsymbol\delta}^{[m]},
	\end{align*}
	for the non-singular matrix
	\begin{align*}
	&{\bf V}:= \left(\frac{(x_n-x_i)^\ell}{\Delta x ^{\ell} \ell !}:x_n \in S_r, \ \ell=0,\ldots, n_r-1 \right)
	\end{align*}
	and ${\boldsymbol\delta}^{[m]}:= (\delta_{m,\ell} :\ell=0,\ldots, n_r-1)^T$. Then a simple calculation with Taylor expansion shows that
	\begin{align}\label{weno:unDD2}
	D^m_r v_{i} =\Delta x^m v^{(m)}(x_{i}) + O(\Delta x^{n_r}),
	\end{align}
	on the smooth region.
    We now define a measurement for the smoothness of data in each substencil by
	\begin{align}\label{weno:beta}
	\begin{split}
	& \beta_r := 
	\left| D^2_r v_{i} \right|^2 + \left| D^3_r v_{i}\right|^2,
	\qquad {\rm for}\quad r= 0, 1, 2,
	\end{split}
	\end{align}
	 and the global smoothness indicator $\tau$ is simply defined as the absolute difference between $\beta_{0}$ and $\beta_{2}$, i.e., $$\tau=|\beta_{0}-\beta_{2}|.$$
	
We form the final approximation using
	\begin{equation}\label{eq:wenofin}
	\mathcal{A}J^{R}_{i}:=\sum_{r=0}^{2}\omega_{r}J^{R}_{i,r}.
	\end{equation}
	The nonlinear weights $\omega_{r}$, in the above, are defined as
	\begin{equation}\label{weno:wr}
	\omega_{r}=\tilde{\omega}_{r}/\sum\limits_{s=0}^{2}\tilde{\omega}_{s} \quad \text{and} \quad	\tilde{\omega}_{r}=d_{r}\left( 1+\frac{\tau}{\epsilon+\beta_{r}} + \frac{1}{2}\left(\frac{\beta_r}{\epsilon+\tau}\right)^2 \right),
	\end{equation}
	where $\epsilon>0$ is a small number to avoid the denominator becoming zero. In our experiments we take $\epsilon=10^{-6}$. Note that we have employed the nonlinear weights based on the idea of the WENO-P+3 scheme proposed in \cite{wenop3}, which enjoys less dissipation and higher resolution compared with the classical WENO schemes.
	This construction enables the approximation for $J_{R}[v; \alpha](x_{i})$ to retain a high order of accuracy and it is proven in the following theorem. 

\begin{thm}
	Assume that $v$ is smooth on the global stencil $S(i)$. Then the approximation
	$\mathcal{A}J^{R}_{i}$ defined in \eqref{eq:wenofin} satisfies the relation
	\begin{align*}
	|J_{R}[v; \alpha](x_{i}) - \mathcal{A}J^{R}_{i}|=\alpha \mO(\Delta x^6),
	\end{align*}
	i.e., converges  to $J_{R}[v; \alpha](x_{i})$ in $5$th order accuracy if $\alpha \sim \frac{1}{\Delta x} $.
\end{thm}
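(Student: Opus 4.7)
The plan is to reduce the theorem to a comparison between the nonlinear approximation $\mathcal{A}J_i^R = \sum_r \omega_r J_{i,r}^R$ and the linear one $J_i^R = \sum_r d_r J_{i,r}^R$, which is already known to satisfy $J_i^R = J_R[v;\alpha](x_i) + \alpha \mO(\Delta x^6)$. Writing $J_{i,r}^R = J_R[v;\alpha](x_i) + \alpha E_r$ with $E_r = \mO(\Delta x^4)$, and using $\sum_r d_r = \sum_r \omega_r = 1$, I would decompose
\begin{align*}
\mathcal{A}J_i^R - J_R[v;\alpha](x_i) = \bigl(J_i^R - J_R[v;\alpha](x_i)\bigr) + \alpha \sum_{r=0}^{2}(\omega_r - d_r)\, E_r .
\end{align*}
The first bracket is $\alpha \mO(\Delta x^6)$ by assumption, so the target estimate reduces to showing $\omega_r - d_r = \mO(\Delta x^2)$ in smooth regions, after which the second sum is bounded by $\alpha \cdot \mO(\Delta x^2) \cdot \mO(\Delta x^4) = \alpha \mO(\Delta x^6)$.

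To control $\omega_r - d_r$, I would Taylor-expand the smoothness indicators. The consistency estimate \eqref{weno:unDD2} with $n_r = 4$ gives $D_r^2 v_i = \Delta x^2 v''(x_i) + \mO(\Delta x^4)$ and $D_r^3 v_i = \Delta x^3 v'''(x_i) + \mO(\Delta x^4)$, so squaring and adding yields
\begin{align*}
\beta_r = \Delta x^4 \bigl(v''(x_i)\bigr)^2 + \mO(\Delta x^6),
\end{align*}
where, crucially, the leading $\Delta x^4$ coefficient is \emph{independent of the substencil index} $r$ because the dominant parts of $D_r^m v_i$ are exactly the point values $v^{(m)}(x_i)$. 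The remaining stencil-dependent pieces enter only through the $\mO(\Delta x^6)$ remainder, so the global indicator satisfies $\tau = |\beta_0 - \beta_2| = \mO(\Delta x^6)$.

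Feeding these scalings into \eqref{weno:wr} with the fixed regularization $\epsilon = 10^{-6}$ and (generically) $v''(x_i) \neq 0$, so that $\beta_r$ is of order $\Delta x^4$ and dominates $\epsilon$ for small $\Delta x$, I obtain $\tau/(\epsilon+\beta_r) = \mO(\Delta x^2)$. Similarly, because $\tau \to 0$ while $\epsilon$ is kept fixed, $(\beta_r/(\epsilon+\tau))^2$ reduces to $(\beta_r/\epsilon)^2 = \mO(\Delta x^8)$. Thus $\tilde\omega_r = d_r\bigl(1 + \mO(\Delta x^2)\bigr)$; the partition-of-unity computation $\sum_s \tilde\omega_s = 1 + \mO(\Delta x^2)$ and normalization then give $\omega_r = d_r + \mO(\Delta x^2)$, completing the chain of estimates.

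I expect the main obstacle to be verifying that the leading $\Delta x^4$ term of $\beta_r$ is truly $r$-independent; this rests on carefully interpreting the construction \eqref{weno:unDD} of the generalized undivided differences and applying \eqref{weno:unDD2} at the correct order. A secondary subtlety is that the exotic term $(\beta_r/(\epsilon+\tau))^2$ in \eqref{weno:wr} threatens to blow up as $\tau \to 0$, but because $\epsilon$ is kept fixed while $\Delta x \to 0$ it collapses to a harmless $\mO(\Delta x^8)$ contribution rather than a dangerous one. Degenerate points where $v''(x_i) = 0$ can be handled by a separate expansion using the next leading derivative, following the same template.
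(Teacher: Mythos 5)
Your proposal is correct and is essentially the paper's own proof: the identical splitting of the error into $\bigl(J_{R}[v;\alpha](x_{i})-\sum_{r}d_{r}J^{R}_{i,r}\bigr)+\sum_{r}(d_{r}-\omega_{r})J^{R}_{i,r}$, the same use of the partition of unity together with the fourth-order substencil errors to reduce everything to $\omega_{r}-d_{r}=\mO(\Delta x^{2})$ --- a step the paper dismisses as ``straightforward from \eqref{weno:unDD2} and \eqref{weno:wr}'' after selecting $\epsilon=\Delta x^{2}$, whereas you keep $\epsilon=10^{-6}$ fixed and supply the expansions $\beta_{r}=\Delta x^{4}\bigl(v''(x_{i})\bigr)^{2}+\mO(\Delta x^{6})$ and $\tau=\mO(\Delta x^{6})$ explicitly. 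One caveat: your claim that $\beta_{r}$ dominates the fixed $\epsilon$ for small $\Delta x$ is backwards (with $\epsilon$ fixed, $\beta_{r}=\mO(\Delta x^{4})\to 0$ is eventually dominated by $\epsilon$), but this is harmless since $\tau/(\epsilon+\beta_{r})\leq\min(\tau/\beta_{r},\,\tau/\epsilon)$ still delivers your $\mO(\Delta x^{2})$, and indeed the uniform bounds $\tau/\epsilon=\mO(\Delta x^{4})$ and $(\beta_{r}/\epsilon)^{2}=\mO(\Delta x^{8})$ render the separate treatment of the degenerate case $v''(x_{i})=0$ unnecessary in your fixed-$\epsilon$ setting.
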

\begin{proof}
	From the equations \eqref{weno:dr} and \eqref{eq:wenofin}, it is easily obtained that
	\begin{align*}
	J_{R}[v; \alpha](x_{i}) - \mathcal{A}J^{R}_{i}=
	\left(J_{R}[v; \alpha](x_{i})-\sum_{r=0}^{2}d_{r}J^{R}_{i,r}\right) + \sum_{r=0}^{2}(d_r - \omega_{r})J^{R}_{i,r}
	\end{align*}
	using the local approximations $J^R_{i,r}$ to $J_{R}[v; \alpha](x_{i})$.
	By definition of the linear weights $d_r$ in the \eqref{eq:gloJR} and \eqref{weno:dr}, the first term on the right-hand side has the designed order of accuracy so that it is sufficient to consider the second term. Since the local integral $J^R_{i,r}$ is constructed to have convergence in \eqref{eq:localJR} and both weights $\{d_r\}$ and $\{\omega_{r}\}$ fulfill a partition of unity, we have
	\begin{align*}
	\begin{split}
	\sum_{r=0}^{2}(d_r - \omega_{r})J^{R}_{i,r}&=\sum_{r=0}^{2}(d_r - \omega_{r})\left(J_{R}[v; \alpha](x_{i}) + \alpha \mO(\Delta x^4) \right)\\
	&= \alpha \sum_{r=0}^{2}(d_r - \omega_{r}) \mO(\Delta x^4)=\alpha \mO(\Delta x^6),
	\end{split}
	\end{align*}
	where the last equality is straightforward from \eqref{weno:unDD2} and \eqref{weno:wr} if we select $\epsilon=\Delta x^2$ in \eqref{weno:wr}.
\end{proof}

\begin{rem}
	In \cite{christlieb2017kernel, christlieb2019kernel}, the authors proposed to adapt nonlinear filters $\sigma_{i,L}$ and $\sigma_{i,R}$ to control oscillations which arise when the derivative of the solution to the H-J equation develops discontinuities. For example, in the periodic case, the approximation is given by 
	\begin{align}\label{eq:change_per}
		\begin{split}
		& \phi_{x,i}^{-}=\alpha\mathcal{D}_{L}[\phi,\alpha](x_{i}) + \alpha\sum_{p=2}^{k}\sigma_{i,L}\mathcal{D}_{L}^{p}[\phi,\alpha](x_{i}),\\
		& \phi_{x,i}^{+}=-\alpha\mathcal{D}_{R}[\phi,\alpha](x_{i}) - \alpha\sum_{p=2}^{k}\sigma_{i,R}\mathcal{D}_{R}^{p}[\phi,\alpha](x_{i}).
		\end{split}
	\end{align}
	The authors applied WENO quadrature to approximate the operators $\mathcal{D}_{L}$ and $\mathcal{D}_{R}$, but only for the first step, which corresponds to $p=1$ in \eqref{eq:change_per}. The filter was then adapted for the case $p\geq2$, so that a cheaper linear quadrature, defined on fixed stencils, can be used.
\end{rem}

We design a filter by defining parameters $\theta_{i}$ as
\begin{align}
\theta_{i}=\frac{\min_r (| D^1_r v_{i} | + | D^2_r v_{i}|)+\epsilon}{\max_r(| D^1_r v_{i} | + | D^2_r v_{i}|)+\epsilon}, \quad r=0,\cdots,3
\end{align}
where $D^k_r v_{i}$ are undivided differences of order $k$ on four three-point stencils around $v_i$, defined in \eqref{weno:unDD2}. Then, we adopt a nondecreasing map $\mu$ designed with cubic $B$-splines for $\theta_i$, in Figure \ref{Fig_filters} and the filter $\sigma_{i}$ is defined as 
\begin{align}\label{filter}
\sigma_{i}=\mu(2\cdot \theta_{i}^2). 
\end{align}

\begin{figure}[h]
	\centering
	\includegraphics[width=0.4\textwidth]{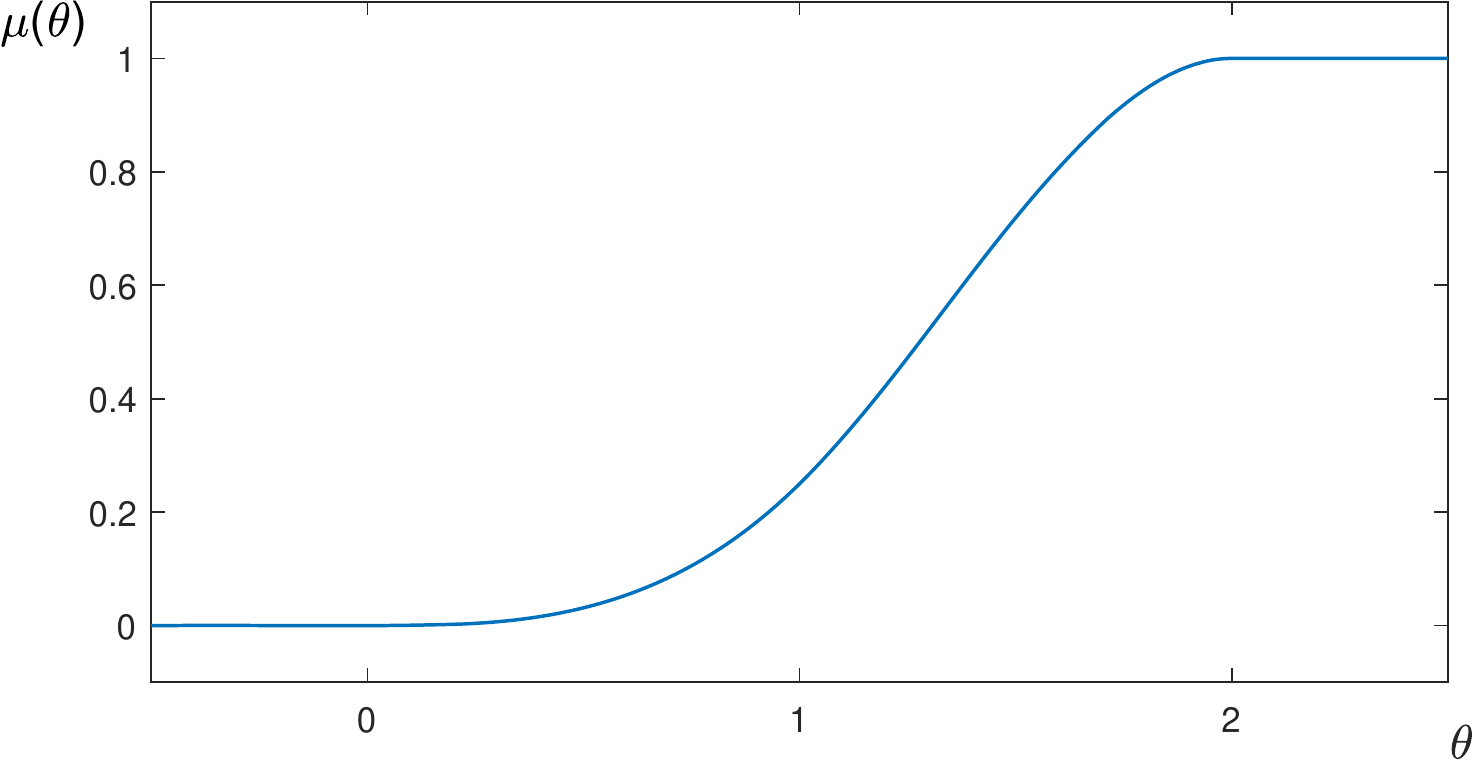}
	\caption{\em The function $\mu$ designed with cubic $B$-splines for filters $\sigma$.}
	\label{Fig_filters}
\end{figure}

\subsection{Algorithm}
In what follows, we will use $H$ and the coordinates $x$, rather than $\tilde{H}$ and coordinates $\xi$, for purposes of convenience. For time integration, we propose to use the classic explicit SSP RK methods \cite{gottlieb2001strong} to advance the solution from time $t^{n}$ to $t^{n+1}$. We denote $\phi^{n}$ as the semi-discrete solution at time $t^{n}$. In this work, we use the following SSP RK methods, including the first order forward Euler scheme
\begin{align}
    \label{eq:rk1}
    \phi^{n+1}=\phi^{n}-\Delta t \hat{H}(\phi^{n,-}_{x},\phi^{n,+}_{x});
\end{align} 
the second order SSP RK scheme
\begin{align}
    \label{eq:rk2}
    & \phi^{(1)}=\phi^{n}-\Delta t \hat{H}(\phi^{n,-}_{x},\phi^{n,+}_{x}),\nonumber\\
    & \phi^{n+1}=\frac{1}{2}\phi^{n}+\frac{1}{2}\left( \phi^{(1)} -\Delta t \hat{H}(\phi^{(1),-}_{x},\phi^{(1),+}_{x}) \right);
\end{align}
and the third order SSP RK scheme
\begin{align}
    \label{eq:rk3}
    & \phi^{(1)}=u^{n}-\Delta t \hat{H}(\phi^{n,-}_{x},\phi^{n,+}_{x}),\nonumber\\
    & \phi^{(2)}=\frac{3}{4}\phi^{n}+\frac{1}{4} \left( \phi^{(1)}-\Delta t \hat{H}(\phi^{(1),-}_{x},\phi^{(1),+}_{x}) \right), \nonumber\\
    & \phi^{n+1}=\frac{1}{3}\phi^{n}+\frac{2}{3} \left( \phi^{(2)}-\Delta t \hat{H}(\phi^{(2),-}_{x},\phi^{(2),+}_{x}) \right).
\end{align}
In addition, linear stability of the proposed kernel-based schemes has been established in \cite{christlieb2019kernel}:
\begin{thm}\label{thm-beta}
	For the linear equation $\phi_{t}+c\phi_{x}=0$, (i.e. the Hamiltonian is linear) with periodic boundary conditions, we consider the $k^{th}$ order SSP RK method as well as the  $k^{th}$ partial sum in \eqref{eq:partialsum_per}, with $\alpha=\beta/(|c|\Delta t)$. Then there exists a constant $\beta_{k,max}>0$ for $k=1,\,2,\,3$, such that the scheme is unconditionally stable provided  $0<\beta\leq\beta_{k,\max}$. The constants $\beta_{k,max}$ for $k=1,\,2,\,3$ are summarized in following:
	\begin{table}[htb]
		\centering
		\begin{tabular}{| l | p{1cm} | p{1cm} | p{1cm} |}
			\hline
			$k$ &  1  & 2  & 3  \\\hline
			$\beta_{k,max}$  &  2  &  1  &  1.243  \\\hline
		\end{tabular}
	\end{table}
\end{thm}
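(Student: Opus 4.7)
The plan is to carry out a von Neumann (Fourier) stability analysis. Since the equation is linear, constant-coefficient, and periodic, one can insert a single mode $\phi(x,t_n) = \hat\phi^n e^{i\omega x}$ and track the amplification factor as a function of wavenumber. First I would compute the Fourier symbols of the kernel operators. The operator $\mathcal{L}_L = \mathcal{I} - \partial_x/\alpha$ has symbol $1 - i\omega/\alpha$, so $\mathcal{L}_L^{-1}$ has symbol $\alpha/(\alpha - i\omega)$ and
$$z_L(\omega) = \text{symbol of } \mathcal{D}_L = \frac{-i\omega}{\alpha - i\omega}.$$
By the analogous computation $\mathcal{D}_R$ has symbol $z_R = \overline{z_L}$ and $\mathcal{D}_0$ has symbol $\omega^2/(\alpha^2 + \omega^2)$. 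Setting $\nu := \omega/\alpha = \omega|c|\Delta t/\beta$ reduces each of these to a function of $\nu$ alone; in particular the geometric-series identity $1/(1-z_L) = 1 - i\nu$ gives the useful closed form $\sum_{p=1}^k z_L^p = -i\nu\,(1 - z_L^k)$.

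Next, assume without loss of generality $c>0$; the opposite sign is symmetric under $\mathcal{D}_L \leftrightarrow \mathcal{D}_R$. The local Lax--Friedrichs flux collapses to $\hat H = c\,\phi_x^-$, so the semi-discrete scheme reads $d\hat\phi/dt = -c\, S_k(\omega)\,\hat\phi$ with $S_k$ the symbol of $\mathcal{P}_k^L$. Using the identity above,
$$-c\Delta t\, S_k \;=\; -i\beta\nu\bigl(1 - z_L^k\bigr) \;+\; (\text{the } \mathcal{D}_0\mathcal{D}_L^2 \text{ correction when } k=3),$$
which depends only on $\nu$ and $\beta$. Because the $k$-stage SSP RK method coincides with the degree-$k$ Taylor truncation of the exponential for linear problems, the one-step amplification factor is the polynomial
$$R_k(\nu;\beta) \;=\; \sum_{j=0}^{k} \frac{1}{j!}\bigl(-c\Delta t\,S_k\bigr)^j.$$
Unconditional stability is equivalent to the statement that $|R_k(\nu;\beta)| \le 1$ for all $\nu\in\mathbb{R}$, because $\nu$ and $\Delta t$ are independent once $\beta$ is fixed.

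The final step is to identify, for each $k\in\{1,2,3\}$, the largest $\beta_{k,\max}$ for which
$$F_k(\nu;\beta) \;:=\; 1 - |R_k(\nu;\beta)|^2 \;\ge\; 0 \quad \text{for all }\nu\in\mathbb{R}.$$
Substituting the explicit form of $z_L$ makes $F_k$ a rational function in $\nu$ (in fact, a polynomial in $\nu$ divided by a power of $1+\nu^2$), so one locates the critical $\beta$ by solving the coupled system $F_k=0,\ \partial_\nu F_k=0$. For $k=1$ the algebra is elementary and produces the stated value $\beta_{1,\max}=2$. For $k=2$, and especially for $k=3$, the expressions blow up in complexity because of the extra Taylor term and the $\mathcal{D}_0\mathcal{D}_L^2$ correction, yielding high-degree trigonometric/rational inequalities.

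The main obstacle is precisely this last step: certifying the non-negativity of $F_k$ in closed form and extracting the sharp threshold (in particular the non-round value $\beta_{3,\max} \approx 1.243$). A practical route is to locate the worst-case frequency $\nu^\ast(\beta)$ numerically, differentiate to obtain a polynomial condition in $\beta$ at the tangency point, and then verify analytically that $F_k(\nu;\beta_{k,\max})$ has a double root at $\nu^\ast$ and is non-negative everywhere else (for instance via a sum-of-squares decomposition of the rational numerator). This is the computation carried out in \cite{christlieb2019kernel} that Theorem~\ref{thm-beta} is citing, and it is where essentially all of the work resides; the reduction to a one-parameter Fourier symbol analysis above is routine, while the delicate estimate is the uniform-in-$\nu$ bound on $|R_k|$.
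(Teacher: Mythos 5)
Your proposal reconstructs what is, in fact, the cited proof rather than an in-paper one: this paper states Theorem \ref{thm-beta} without proof, deferring to \cite{christlieb2019kernel}, and the argument there is precisely the von Neumann analysis you outline. Your symbol computations are right: $\mD_L$ has symbol $z_L=-i\nu/(1-i\nu)$ with $\nu=\omega/\alpha$, $z_R=\overline{z_L}$, $\mD_0$ has symbol $\nu^2/(1+\nu^2)$, the identity $\sum_{p=1}^{k}z_L^{p}=-i\nu\,(1-z_L^{k})$ holds, the $k$-stage SSP RK methods do reduce to the degree-$k$ Taylor truncation of the exponential on linear problems, and the equivalence of unconditional stability with $\sup_{\nu\in\RR}\lvert R_k(\nu;\beta)\rvert\le 1$ is legitimate since $\nu$ sweeps the real line as $\omega$ and $\Delta t$ vary with $\beta$ fixed. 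Deferring the sharp thresholds for $k=2,3$ to a tangency condition certified with computational assistance also matches how the cited work actually obtains $\beta_{2,\max}=1$ and $\beta_{3,\max}\approx 1.243$.

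There is, however, one genuine error in the execution: an upwind/downwind orientation mismatch. For $c>0$ the local Lax--Friedrichs flux \eqref{eq:LLF} collapses to $\hat H=c\,\phi_x^{-}$, and in the paper's periodic notation \eqref{eq:partialsum_per} the quantity $\phi_x^{-}$ is approximated by $\mP^{R}_{k}$, built from the \emph{upwind} operator $\mD_R$ with symbol $z_R=i\nu/(1+i\nu)$ --- not by $\mP^{L}_{k}$ as you wrote. This is not a harmless relabeling absorbed by conjugation symmetry: conjugating flips the sign of the $i\beta\nu$ prefactor but not of the scheme, so the $z_L$-based factor is the symbol of the downwind discretization. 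Concretely, your formula $-c\Delta t\,S_1=-i\beta\nu(1-z_L)$ gives $R_1=\bigl(1-i(1+\beta)\nu\bigr)/(1-i\nu)$, hence $\lvert R_1\rvert^2=\bigl(1+(1+\beta)^2\nu^2\bigr)/(1+\nu^2)\ge 1$ for every $\beta>0$, so the ``elementary algebra'' you claim yields $\beta_{1,\max}=2$ would in fact certify unconditional \emph{instability} as written. With the correct operator one gets $R_1=\bigl(1+i(1-\beta)\nu\bigr)/(1+i\nu)$ and $\lvert R_1\rvert^2=\bigl(1+(1-\beta)^2\nu^2\bigr)/(1+\nu^2)\le 1$ exactly when $0<\beta\le 2$. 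Once you swap $z_L\to z_R$ throughout (carrying the $k=3$ correction $-\alpha\,\mD_0*\mD_R^{2}$ with its sign from \eqref{eq:partialsum_per}), the remainder of your plan coincides with the proof the theorem is citing.
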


We summarize the proposed scheme for solving the one dimensional periodic boundary case in the following algorithm flowchart.

\def\HRule{\rule{\linewidth}{0.1mm}}
\medskip\noindent

{\sf
	\hskip -0.18truein\HRule
	
	\noindent
	\textbf{Algorithm}: MOL$^T$-type scheme for solving one-dimensional H-J equation on nonuniform grids
	\vskip -0.1truein
	\hskip -0.18truein\HRule
	
	\medskip\noindent
	We solve \eqref{eq:1D} until the final time $t=T$.
	Let the given nonuniform grid on the physical domain $[a,b]$ be $\{x_{i}\}$, $i=0,\ldots,N$.
	We denote the numerical solution at $n$-th time step $t=t^n$ by $\{\phi^n_{i}:=\phi^n(x_i)\}$. Start with $n=0$ and $\{\phi^0_i \}$ is given.
	
	\noindent
	\begin{enumerate}[label=\textbf{\arabic*.}]
		\setcounter{enumi}{0}
		\item 
		Define a computational grid $\{\xi_i\}$, $i=0,\ldots,N$, on $[0, 1]$ with uniform distribution $\Delta \xi = 1/N$.
		
		\item
		Approximate the associated Jacobian for the transformation $x_{\xi}$ by a fourth-order finite difference scheme:
		\begin{align}\label{jacobian}
			\{ x_{\xi} |_i = \frac{x_{i-2} - 8 x_{i-1} + 8x_{i+1} -x_{i+2} }{ 12 \Delta \xi}\},
		\end{align}
		and then we now solve the transformed H-J equation \eqref{eq:1Dxi}.
		
		\item 
		Set $\beta$ depending on desired order $k$ of the scheme according to Theorem \ref{thm-beta}
		and the time step size $\Delta {t^0}$ by CFL condition \eqref{cfl_1d}.
		
	\end{enumerate}
	
	\medskip\noindent
	While $t<T$, given $\phi^n$ and $\Delta t^n$ at time $t=t^n$, $n \geq 0$, 
	\begin{enumerate} [label=\textbf{\arabic*.}]
		\setcounter{enumi}{3}

		\item \label{algo:J}
		Approximate the integrals $J_L$ and $J_R$ using the exponential polynomial based WENO quadrature. 
		For $i=0,\ldots,N$,
		\begin{enumerate}
			\item 
			Construct the local approximations $J_{i,r}^L$ and $J_{i,r}^R$ based on exponential polynomials \eqref{eq:localJR}
			on each substencil $S_r$ for $r=0,1,2$,
			determine the linear weights $d_r$, and form the approximation given by equation \eqref{weno:dr}. 
			\item Compute the smoothness indicators of local data via \eqref{weno:beta} and construct nonlinear weights \eqref{weno:wr} for the final approximations $J_i^L$ and $J_i^R$ in \eqref{eq:wenofin}.

		\end{enumerate}

		\item  \label{algo:D} 
			Using the previously computed approximations $\mathcal{A}J_i^L$ and $\mathcal{A}J_i^R$, apply the recurrence relations \eqref{eq:right-sweep} and \eqref{eq:left-sweep} to obtain the convolution integrals. Form the inverse operators \eqref{eq:left inverse} and \eqref{eq:right inverse} by applying boundary conditions (see Section \ref{sec:periodic}). Once we have $\mathcal{L}^{-1}_{L}$ and $\mathcal{L}^{-1}_{R}$, construct the operators $\mathcal{D}_{L}$ and $\mathcal{D}_{R}$  defined in \eqref{eq:operD}.
			
		\item 
			Repeat step \ref{algo:J} and \ref{algo:D} $k$ times to approximate the first derivatives $\phi_{\xi}^-$ and $\phi_{\xi}^+$ with $k$ partial sums of $\mathcal{D}_{L}$ and $\mathcal{D}_{R}$, respectively. If multiple terms in the partial sums are desired, i.e., $k \geq 2$, then apply WENO quadrature only to the first terms and use the cheaper linear quadrature on the remaining ones. For this case, additional filters \eqref{filter}, obtained in WENO quadrature, are needed. Filters are applied according to \eqref{eq:partialsum_per}.
		
		\item 	
		Form the local Lax-Friedrichs Hamiltonian \eqref{eq:LLF} for the transformed H-J equation \eqref{eq:1Dscheme}
		using the inverse of associated Jacobian \eqref{jacobian}, {\it i.e.}, $\{{\xi}_x\}$. Then,
		update the time step from $t^n$ to $t^{n+1}=t^n+\Delta t^n$ by applying an appropriate RK scheme \eqref{eq:rk1} - \eqref{eq:rk3}. One should couple an order $k$ RK method to an approximation for the partial derivatives of an equivalent order for consistency. 
		
		\item 
		If $t^{n+1}<T$, set the time step size $\Delta t^{n+1}$ by \eqref{cfl_1d} with updated wave speeds. Otherwise, $t^{n+1}+\Delta{t}^{n+1}>T$, so we set $\Delta {t}^{n+1}=T-t^{n+1}$. Execute another time step of the process, beginning with \ref{algo:J}, until time $T$.
		
	\end{enumerate}
	
	\hskip -0.18truein\HRule
}

\subsection{Two-dimensional implementation}
\label{section:2D}

Consider the two-dimensional H-J equation
\begin{equation}\label{eq:2d}
\phi_{t}+H(\phi_{x},\phi_{y})=0,
\end{equation}
with $\phi(x,y,0)=\phi^{0}(x,y)$.
We assume $(x_{i}, y_{j})$ refers to the $(i,j)$-th node of a two-dimensional orthogonal grid. The spacing between points is denoted by $\Delta x_{i}=x_{i}-x_{i-1}$ and $\Delta y_{j}=y_{j}-y_{j-1}$. In addition, we shall take $\phi(x_{i},y_{j},t) = \phi_{i,j}(t)$ as the discrete solution to \eqref{eq:2d} on the grid.  

As with the one-dimensional case, we assume the existence of one-to-one coordinate transformations
$x=x(\xi, \eta)$ and $y = y(\xi, \eta)$ from the computational domain $[0,1]\times[0,1]$ to the physical domain. Here, the computational domain is distributed by a fixed uniform mesh given by $\xi_i=i\Delta \xi$ and  $\eta_j=j\Delta \eta$ with $\Delta \xi=1/N$ and $\Delta \eta=1/N$.
Then, the H-J equation \eqref{eq:2d} defined on irregular domain becomes
\begin{equation}
\phi_{t}+\tilde{H}(\phi_{\xi},\phi_{\eta})=0
\end{equation}
defined on uniform spatial domain, where $\tilde{H}(\phi_{\xi},\phi_{\eta}):=H(\xi_x\phi_{\xi}+\eta_x\phi_{\eta}, \xi_y\phi_{\xi}+\eta_y\phi_{\eta})$. Below, we will use $H$ and coordinates $x$ and $y$ instead of $\tilde{H}$ and coordinates $\xi$ and $\eta$. In the two-dimensional examples, we shall use the semi-discrete scheme 
\begin{align}
    \label{eq:2Dscheme}
    \frac{d}{dt}\phi_{i,j}(t) + \hat{H}( \phi^{-}_{x,i,j}, \phi^{+}_{x,i,j}; \phi^{-}_{y,i,j}, \phi^{+}_{y,i,j} ) = 0,
\end{align}
where the numerical Hamiltonian $\hat{H}(u,u;v,v)$ is a Lipschitz continuous monotone flux that is consistent with $H$. As in the one-dimensional case, we employ the local Lax-Friedrichs numerical Hamiltonian.
\section{Numerical results}\label{sec:result}
In this section, we present the numerical results of the proposed scheme for one-dimensional and two-dimensional Hamilton-Jacobi equations using regular and irregular grids discussed in Section \ref{subsec:1d}, \ref{subsec:2d} and \ref{subsec:map}, respectively.
The code implementation in Python with some sample results are available on the web \cite{yang_2019}.
The parameter $\lambda>0$ in the exponential basis can be tuned according to the problem, but in this paper, it is selected so that $\lambda \Delta \xi=1$ in all experiments. While there are examples with a  $\text{CFL} > 1$, unless otherwise stated, the presented numerical results are computed by the third-order scheme (i.e., $k=3$) using $\text{CFL} = 0.5$ to demonstrate the performance.

\subsection{One-dimensional cases}\label{subsec:1d}
Here, we present convergence results for the schemes on a one-dimensional uniform mesh with $\Delta x=\Delta x_i$ for $i=0,\cdots,N$. The time step is set by 
\begin{align}\label{cfl_1d}
    \Delta t=\text{CFL}\frac{\Delta x}{\alpha},
\end{align}
where $\alpha$ is the maximum wave propagation speed. We see the order of accuracy of proposed scheme for the linear and non-linear problems and present numerical results for several H-J examples.

\begin{exa} \label{ex1d:1}
We first solve the linear advection equation 
\begin{align}
    \phi_{t}+\phi_{x}=0 
\end{align}
on the spatial domain $[0,1]$ with periodic boundary conditions. For the initial condition, we use the smooth function 
\begin{align*}
    \phi(x,0)=\phi_1(x):=\sin(2\pi x).
\end{align*}
In Table \ref{tab:ex1}, we provide the $L_{\infty}$ errors at time $T=1$ and along with the associated order of accuracy. We can see that $k$th order of accuracy is achieved for $k=2$ and $3$ cases and second order accuracy is observed for the case $k=1$. Such superconvergence for the first order scheme, with $k=1$, is expected by observing that the proposed scheme, with $\beta=2$, applied to the linear problem, is equivalent to the second order Crank-Nicolson scheme \cite{christlieb2016weno}. 

\begin{table}[b!]
	\centering
\vspace{2mm}
	\begin{small}
		\begin{tabular}{|c|c|cc|cc|cc|}
			\hline
			\multirow{2}{*}{CFL} &  \multirow{2}{*}{$N$} & \multicolumn{2}{c|}{$k=1$. $\beta=2$.} & \multicolumn{2}{c|}{$k=2$. $\beta=1$.} & \multicolumn{2}{c|}{$k=3$. $\beta=1.2$.}\\
			\cline{3-8}
			& &  error &   order  &  error &  order  &  error  & order  \\\hline
			\multirow{5}{*}{0.5}  
			&  20  &  1.28e-02 &   --   &  1.73e-01  &   --     &  2.25e-03 &  -- \\
			&  40  &  3.23e-03 & 1.987  &  4.48e-02  &  1.947   &  1.72e-04 & 3.707  \\
			&  80  &  8.07e-04 & 1.998  &  1.13e-02  &  1.990   &  1.74e-05 & 3.310  \\
			& 160  &  2.02e-04 & 2.000  &  2.82e-03  &  1.998   &  2.02e-06 & 3.104  \\
			& 320  &  5.05e-05 & 2.000  &  7.06e-04  &  2.000   &  2.40e-07 & 3.076  \\\hline    
			\multirow{5}{*}{1} 
			&  20  &  5.08e-02 &   --   &  5.61e-01 &   --  &  3.15e-02  &  --      \\
			&  40  &  1.29e-02 & 1.981  &  1.73e-01 & 1.698 &  2.35e-03  & 3.744  \\
			&  80  &  3.23e-03 & 1.996  &  4.48e-02 & 1.948 &  1.86e-04  & 3.660  \\
			& 160  &  8.07e-04 & 1.999  &  1.13e-02 & 1.990 &  1.80e-05  & 3.368  \\
			& 320  &  2.02e-04 & 2.000  &  2.82e-03 & 1.998 &  2.06e-06  & 3.129  \\\hline      
			\multirow{5}{*}{2}  
			&  20  &  1.94e-01 &   --  &  9.92e-01 &   --   &  3.08e-01 &   --     \\
			&  40  &  5.09e-02 & 1.931 &  5.66e-01 & 0.810  &  3.16e-02 & 3.283 \\
			&  80  &  1.29e-02 & 1.984 &  1.73e-01 & 1.710  &  2.36e-03 & 3.747 \\
			& 160  &  3.23e-03 & 1.996 &  4.48e-02 & 1.948  &  1.86e-04 & 3.661 \\
			& 320  &  8.07e-04 & 1.999 &  1.13e-02 & 1.990  &  1.80e-05 & 3.370 \\\hline 
		\end{tabular}
	\end{small}	
	\caption{\label{tab:ex1} $L_{\infty}$-errors and orders of accuracy for  Example \ref{ex1d:1} with $\phi_1(x)$ at $T=1$. }
\end{table}

In the second case, we use the following initial function
\begin{equation*} 
\phi(x,0)=\phi_2(x) :=  \begin{cases}
0 & \text{if $ 0 \leq x \leq 0.25 $}, \\
\frac{40}{3}(x-\frac{1}{4}) & \text{if $ 0.25 < x < 0.4 $}, \\
2 & \text{if $ 0.4 \leq x \leq 0.6 $}, \\
\frac{40}{3}(\frac{3}{4}-x) & \text{if $ 0.6 < x < 0.75 $}, \\
0 & \text{if $ 0.75 \leq x \leq 1 $}
\end{cases}
\end{equation*}
which is a continuous and piecewise linear function. We plot the numerical solution and its derivative at time $T=1$, using $N=80$ grid points, in Figure \ref{fig:ex1}. We see that the proposed scheme improves the accuracy of the approximation near the jump discontinuity in the derivatives when compared to our previous scheme.

\begin{figure}[b!]
	\centering
	\vspace{2mm}
	\includegraphics[width=0.75\textwidth]{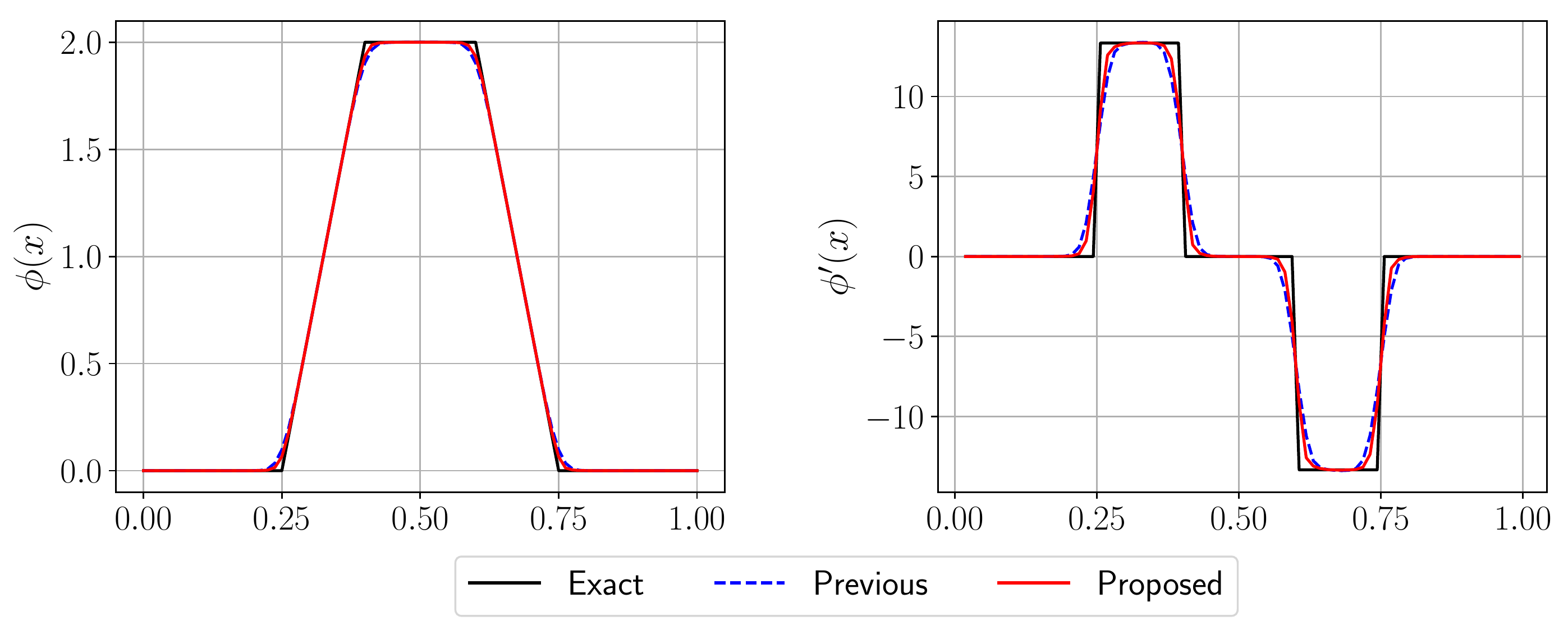}
	\caption{\label{fig:ex1} \em Numerical solution $\phi$ and its derivative $\phi_x$ for Example \ref{ex1d:1} with $\phi_2(x)$ at $T=1$.  }
\end{figure}

\end{exa}

\begin{exa}\label{ex1d:2}
\begin{figure}[b!]
	\centering
\vspace{3mm}
	\includegraphics[width=0.75\textwidth]{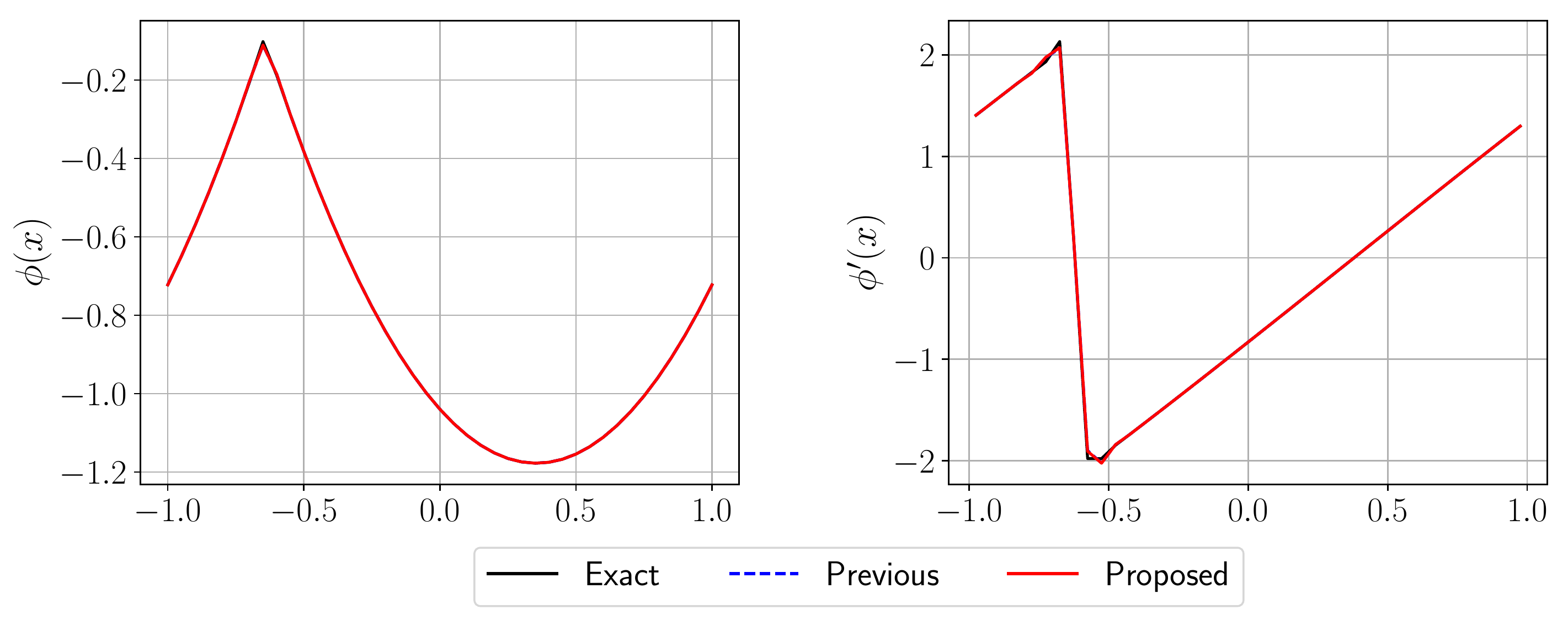}
	\caption{\em Numerical solution $\phi$ and its derivative $\phi_x$ for Example \ref{ex1d:2} at $T=3.5/\pi^2$. }
	\label{fig:ex2}
\end{figure}

In this example, we consider the Burgers' equation
\begin{align}
\phi_{t}+\frac{1}{2}(\phi_{x}+1)^2=0, 
\end{align}
with the smooth initial function
\begin{equation*}
\phi(x,0)=-\cos(\pi x)
\end{equation*}
on the spatial domain $[-1,1]$ with periodic boundary conditions at two different final times.

Our first test for this problem stops at time $T= 0.5/\pi^2$, when the solution is still smooth. In Table \ref{tab:ex2}, we provide the $L_{\infty}$ errors and orders of accuracy for schemes with $k=1,2$ and $3$, which are shown to be $k$-th order. Provided in Figure \ref{fig:ex2} is a plot of the numerical solution at $T= 3.5/\pi^2$, when the shock occurs. Here, $N=40$ grid points are used to compute the solution. It seems that the proposed scheme is as effective as the previous result.

\begin{table}[t!]
	\centering
	\begin{small}
		\begin{tabular}{|c|c|cc|cc|cc|}
			\hline
			\multirow{2}{*}{CFL} &  \multirow{2}{*}{$N$} & \multicolumn{2}{c|}{$k=1$. $\beta=2$.} & \multicolumn{2}{c|}{$k=2$. $\beta=1$.} & \multicolumn{2}{c|}{$k=3$. $\beta=1.2$.}\\
			\cline{3-8}
			& &  error &   order  &  error &  order  &  error  & order  \\\hline
			\multirow{5}{*}{0.5}  
			&  20  &  1.57e-02 & --     &  1.69e-02 &  --   &   3.30e-04 & --     \\
			&  40  &  7.94e-03 & 0.985  &  4.73e-03 & 1.839 &   3.63e-05 &  3.186  \\
			&  80  &  4.09e-03 & 0.958  &  1.29e-03 & 1.881 &   4.58e-06 &  2.986  \\
			& 160  &  2.07e-03 & 0.980  &  3.31e-04 & 1.958 &   5.20e-07 &  3.137  \\
			& 320  &  1.05e-03 & 0.987  &  8.47e-05 & 1.966 &   6.00e-08 &  3.115  \\\hline            
			\multirow{5}{*}{1}  
			&  20  &  3.19e-02 &  --    &  5.48e-02 &  --    &  4.06e-03 & --     \\
			&  40  &  1.57e-02 & 1.025  &  1.70e-02 & 1.691  &  5.08e-04 & 2.998  \\
			&  80  &  7.95e-03 & 0.980  &  4.76e-03 & 1.833  &  4.98e-05 & 3.351  \\
			& 160  &  4.12e-03 & 0.947  &  1.29e-03 & 1.890  &  5.00e-06 & 3.316  \\
			& 320  &  2.07e-03 & 0.992  &  3.31e-04 & 1.959  &  5.32e-07 & 3.234  \\\hline 
			\multirow{5}{*}{2}  
			&  20  &  9.14e-02 &   --   &  1.92e-01 & --     &  3.36e-02 & --     \\
			&  40  &  3.23e-02 & 1.501  &  5.67e-02 & 1.757  &  4.51e-03 & 2.898  \\
			&  80  &  1.57e-02 & 1.042  &  1.71e-02 & 1.731  &  5.12e-04 & 3.139  \\
			& 160  &  8.03e-03 & 0.966  &  4.76e-03 & 1.842  &  5.00e-05 & 3.356  \\
			& 320  &  4.12e-03 & 0.961  &  1.29e-03 & 1.890  &  5.01e-06 & 3.320  \\\hline 		
		\end{tabular}
	\end{small}	
	\caption{\label{tab:ex2}  $L_{\infty}$-errors and orders of accuracy for Example \ref{ex1d:2} at $T=0.5/\pi^2$. }
\vspace{2mm}
\end{table}

\end{exa}


\begin{exa}\label{ex1d:3}
We now solve the one-dimensional Riemann problem with a non-convex Hamiltonian
\begin{align}
\phi_{t}+\frac{1}{4} \left(\phi_{x}^2-1\right) \left( \phi_{x}^2-4 \right) = 0. 
\end{align}
Here, we use the initial condition
\begin{equation*}
\phi(x,0)=-2|x|,
\end{equation*}
on the fixed spatial domain $[-1,1]$ with the inflow Dirichlet boundary conditions $\phi(\pm 1,t)=-2$. 
In Figure \ref{fig1d:3}, we show plots of the numerical solution, which is computed up to time $T = 1$ using $N=80$ grid points. We measure convergence relative to a reference solution that is computed with $N=1600$ grid points. Both the previous and proposed methods are effective at resolving the reference solution.

\begin{figure}[h]
	\vspace{2mm}
	\centering	
	\includegraphics[width=0.75\textwidth]{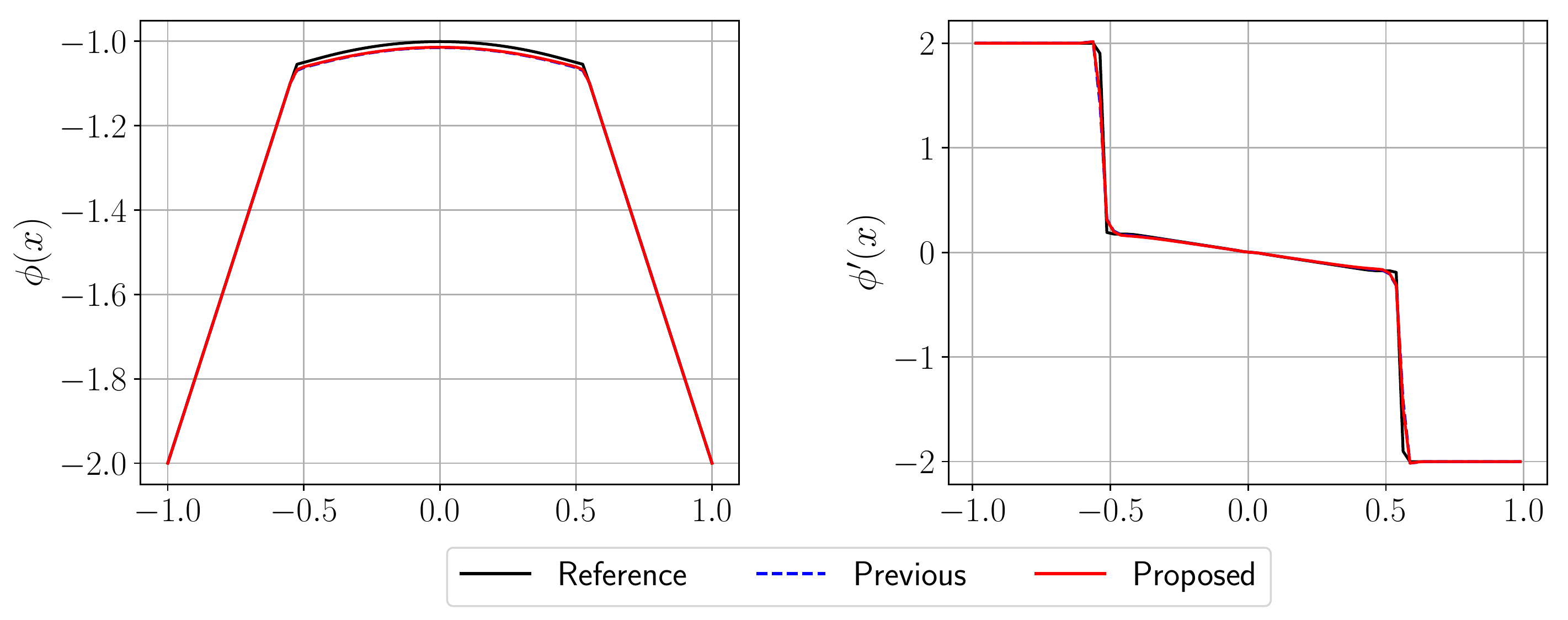}
	\caption{\em Numerical solution $\phi$ and its derivative $\phi_x$ for Example \ref{ex1d:3} at $T=1$. }
	\label{fig1d:3}
\end{figure}

\end{exa}


\subsection{Two-dimensional cases}\label{subsec:2d}

For the two-dimensional cases with uniformly distributed grids, the time step is chosen as
\begin{align}
\Delta t=\frac{\text{CFL}}{\max(\alpha_{x}/\Delta x,\alpha_{y}/\Delta y)},
\end{align}
where $\alpha_x$ and $\alpha_y$ are the maximum wave propagation speeds in the $x$ and $y$ directions, respectively. To demonstrate the performance of proposed scheme, we present the numerical solutions using $\text{CFL} = 0.5$ as well as $\text{CFL} = 2$.

\begin{exa}\label{ex2d:1}
	    \begin{table}[b!]
    	\centering
    	\vspace{2mm}
    	\begin{small}
    		\begin{tabular}{|c|c|cc|cc|cc|}
    			\hline
    			\multirow{2}{*}{CFL} &  \multirow{2}{*}{$N_x\times N_y$} & \multicolumn{2}{c|}{$k=1$. $\beta=1$.} & \multicolumn{2}{c|}{$k=2$. $\beta=0.5$.} & \multicolumn{2}{c|}{$k=3$. $\beta=0.6$.}\\
    			\cline{3-8}
    			& &  error &   order  &  error &  order  &  error  & order  \\\hline	
    			\multirow{5}{*}{0.5}  
    			&   $20\times20$  &  1.28e-02 & --     &  1.73e-01 & --     &  2.25e-03 & --   \\
    			&   $40\times40$  &  3.22e-03 & 1.987  &  4.48e-02 & 1.947  &  1.69e-04 & 3.736   \\
    			&   $80\times80$  &  8.07e-04 & 1.998  &  1.13e-02 & 1.990  &  1.65e-05 & 3.356   \\
    			& $160\times160$  &  2.02e-04 & 2.000  &  2.82e-03 & 1.998  &  1.90e-06 & 3.115   \\
    			& $320\times320$  &  5.05e-05 & 2.000  &  7.06e-04 & 2.000  &  2.33e-07 & 3.032   \\\hline                   	
    			\multirow{5}{*}{1} 
    			&   $20\times20$  & 5.08e-02 & --     &  5.61e-01 & --    &  3.15e-02 & --   \\
    			&   $40\times40$  & 1.29e-02 & 1.981  &  1.73e-01 & 1.698 &  2.35e-03 & 3.744\\
    			&   $80\times80$  & 3.23e-03 & 1.996  &  4.48e-02 & 1.948 &  1.86e-04 & 3.662\\
    			& $160\times160$  & 8.07e-04 & 1.999  &  1.13e-02 & 1.990 &  1.80e-05 & 3.370\\
    			& $320\times320$  & 2.02e-04 & 2.000  &  2.82e-03 & 1.998 &  2.04e-06 & 3.135\\\hline 
    			\multirow{5}{*}{2}  
    			&   $20\times20$  &  1.94e-01 & --    &  9.92e-01 & --    & 3.08e-01 & --   \\
    			&   $40\times40$  &  5.09e-02 & 1.931 &  5.66e-01 & 0.810 & 3.16e-02 & 3.283 \\
    			&   $80\times80$  &  1.29e-02 & 1.984 &  1.73e-01 & 1.710 & 2.36e-03 & 3.747 \\
    			& $160\times160$  &  3.23e-03 & 1.996 &  4.48e-02 & 1.948 & 1.86e-04 & 3.661 \\
    			& $320\times320$  &  8.07e-04 & 1.999 &  1.13e-02 & 1.990 & 1.80e-05 & 3.370 \\\hline   
    		\end{tabular}
    	\end{small}
    	\caption{\label{tab2d:1} 
    		$L_{\infty}$-errors and orders of accuracy for  Example \ref{ex2d:1} with $\phi_3(x,y)$ at $T=2$.
    	}
    \end{table}	
    	
    \begin{figure}[b!]
    	\centering	
    	\vspace{2mm}
    	\subfigure[Previous]{\includegraphics[width=0.43\textwidth]{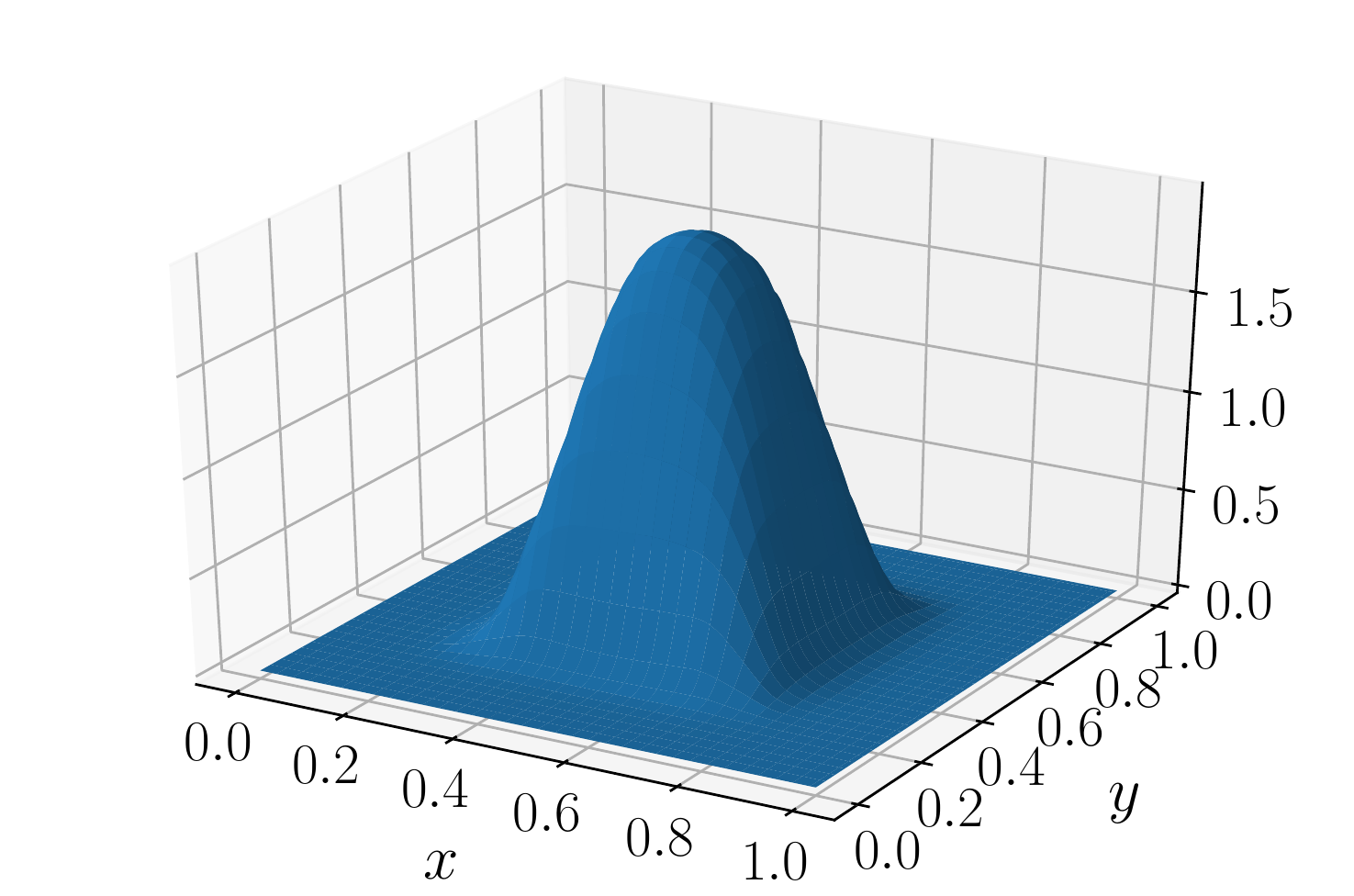}}
    	\subfigure[Proposed]{\includegraphics[width=0.43\textwidth]{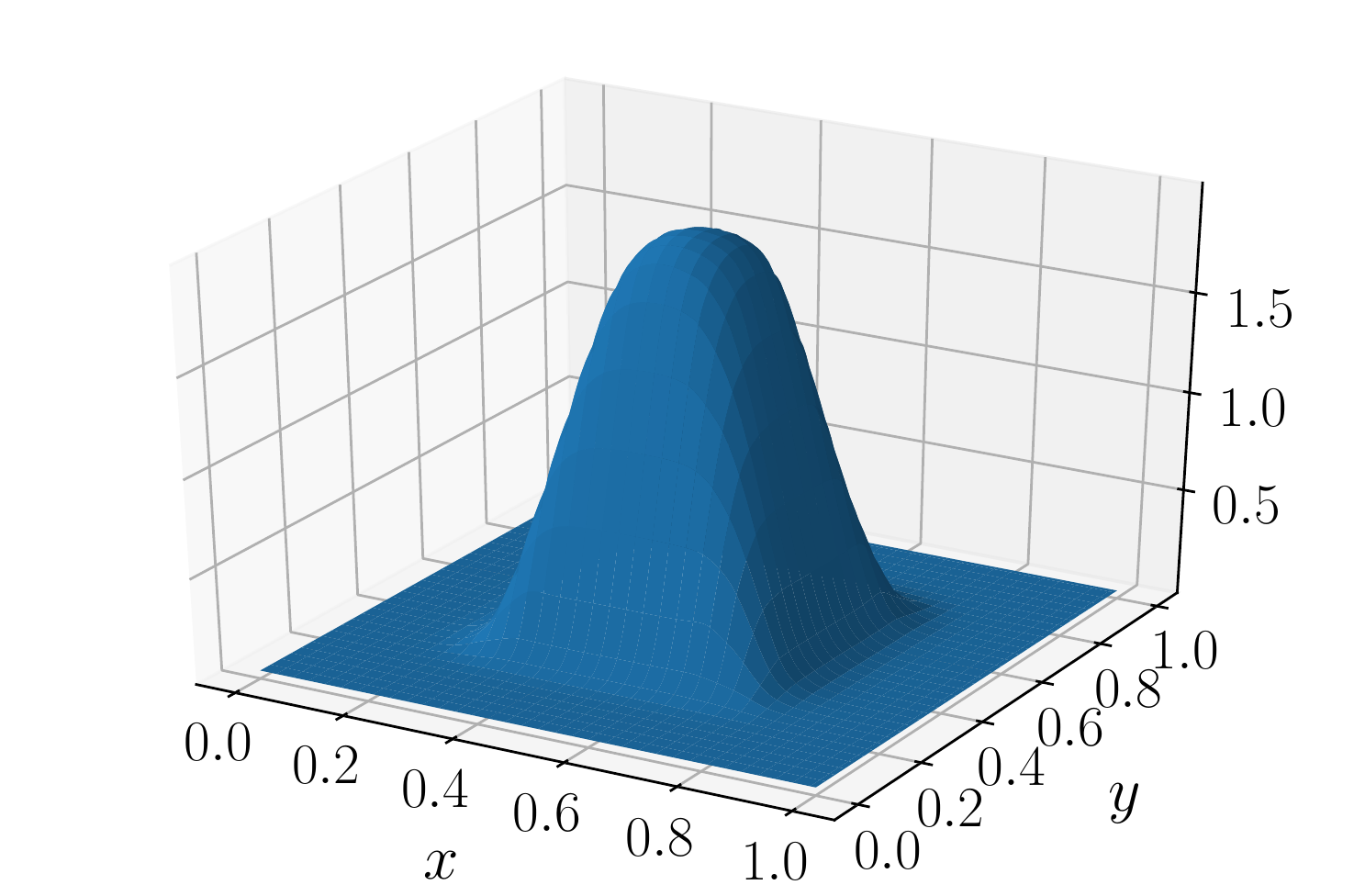}}
    	\caption{\em Numerical solutions for Example \ref{ex2d:1} with $\phi_4(x,y)$ at $T=1$.}
    	\label{fig2d:1}
    \end{figure}
    
    For our first example, we consider the linear advection equation
	\begin{align}
    	\phi_{t}+(\phi_{x}+\phi_{y}+1)=0, 
	\end{align}
	and we apply a periodic boundary condition in each direction. We first consider the smooth initial data given by
	\begin{equation*}
    	\phi(x,y,0) = \phi_3(x,y):=-\cos(\pi(x+y)/2),
	\end{equation*}
	which is defined on the spatial domain $[-2,2]\times[-2,2]$. We report the $L_\infty$ error and orders of accuracy at time $T=2$ in Table \ref{tab2d:1}. It is observed that the proposed scheme achieves the appropriate convergence orders as it does for the one-dimensional case in Example \ref{ex1d:1}. As before, we have the superconvergence for $k=1$ case. 
	
	In the second test problem, we test our method on a piece-wise continuous, i.e., $C^0$, function 
	\begin{equation*} 
	\phi(x,y,0)=\phi_4(x,y) :=  \begin{cases}
	0 & \text{ if  $x \leq 0.2$ { } or { } $y \leq 0.2$}, \\ 
	2 & \text{ if {  } $ 0.4 \leq x,y \leq 0.6 $}, \\
	0 & \text{ if  $x\geq 0.8$ { } or { } $y\geq 0.8$}, \\
	f(x,y) & \text{otherwise}
	\end{cases}
	\end{equation*}
	defined on the domain $[0,1]\times[0,1]$, where $f(x,y)$ is a real-valued piece-wise linear function defined so that $\phi_4$ is continuous. We display the result obtained with the proposed scheme, as well as the previous approach using a $100\times100$ grid of points. Plots of the numerical solutions at the time $T=1$ are provided in Figure \ref{fig2d:1}. As with Example \ref{ex1d:1}, we can see the resolution improvements of the proposed scheme around sharp edges, compared with the previous scheme.

\end{exa}

\begin{exa}\label{ex2d:2}

\begin{table}[b!]
	\centering
	\vspace{2mm}
	\begin{small}
		\begin{tabular}{|c|c|cc|cc|cc|}
			\hline
			\multirow{2}{*}{CFL} &  \multirow{2}{*}{$N_x\times N_y$} & \multicolumn{2}{c|}{$k=1$. $\beta=1$.} & \multicolumn{2}{c|}{$k=2$. $\beta=0.5$.} & \multicolumn{2}{c|}{$k=3$. $\beta=0.6$.}\\
			\cline{3-8}
			& &  error &   order  &  error &  order  &  error  & order  \\\hline	
			\multirow{5}{*}{0.5}  
            &   $20\times20$  &  5.48e-02 & --    &  1.68e-02 & --     &  6.36e-04 & --    \\
            &   $40\times40$  &  2.98e-02 & 0.877 &  4.72e-03 & 1.836  &  5.11e-05 & 3.637 \\
            &   $80\times80$  &  1.63e-02 & 0.874 &  1.28e-03 & 1.880  &  4.29e-06 & 3.574 \\
            & $160\times160$  &  8.37e-03 & 0.961 &  3.30e-04 & 1.956  &  5.13e-07 & 3.065 \\
            & $320\times320$  &  4.27e-03 & 0.970 &  8.46e-05 & 1.965  &  6.03e-08 & 3.091 \\\hline                   	
            \multirow{5}{*}{1} 
            &   $20\times20$  &  9.84e-02 & --     &  5.89e-02 & --    &  7.60e-03 & --    \\
            &   $40\times40$  &  5.54e-02 & 0.829  &  1.69e-02 & 1.801 &  9.22e-04 & 3.043  \\
            &   $80\times80$  &  3.05e-02 & 0.860  &  4.74e-03 & 1.835 &  6.16e-05 & 3.904  \\
            & $160\times160$  &  1.63e-02 & 0.906  &  1.28e-03 & 1.887 &  4.75e-06 & 3.697  \\
            & $320\times320$  &  8.37e-03 & 0.961  &  3.30e-04 & 1.956 &  5.25e-07 & 3.177  \\\hline 
            \multirow{5}{*}{2}  
            &   $20\times20$  &  1.66e-01 & --     &  2.41e-01 & --     &  5.63e-02 & --   \\
            &   $40\times40$  &  9.94e-02 & 0.739  &  5.96e-02 & 2.017  &  8.39e-03 & 2.745  \\
            &   $80\times80$  &  5.64e-02 & 0.817  &  1.70e-02 & 1.810  &  9.15e-04 & 3.197  \\
            & $160\times160$  &  3.05e-02 & 0.886  &  4.74e-03 & 1.841  &  6.26e-05 & 3.871  \\
            & $320\times320$  &  1.63e-02 & 0.906  &  1.28e-03 & 1.886  &  4.76e-06 & 3.717  \\\hline 
		\end{tabular}
	\end{small}
	\caption{\label{tab2d:2}
		$L_{\infty}$-errors and orders of accuracy for  Example \ref{ex2d:2} at  $T=0.5/\pi^2$. }
\end{table}

\begin{figure}[b!]
	\centering
	\vspace{2mm}
	{\includegraphics[width=0.75\textwidth, height=4.3cm]{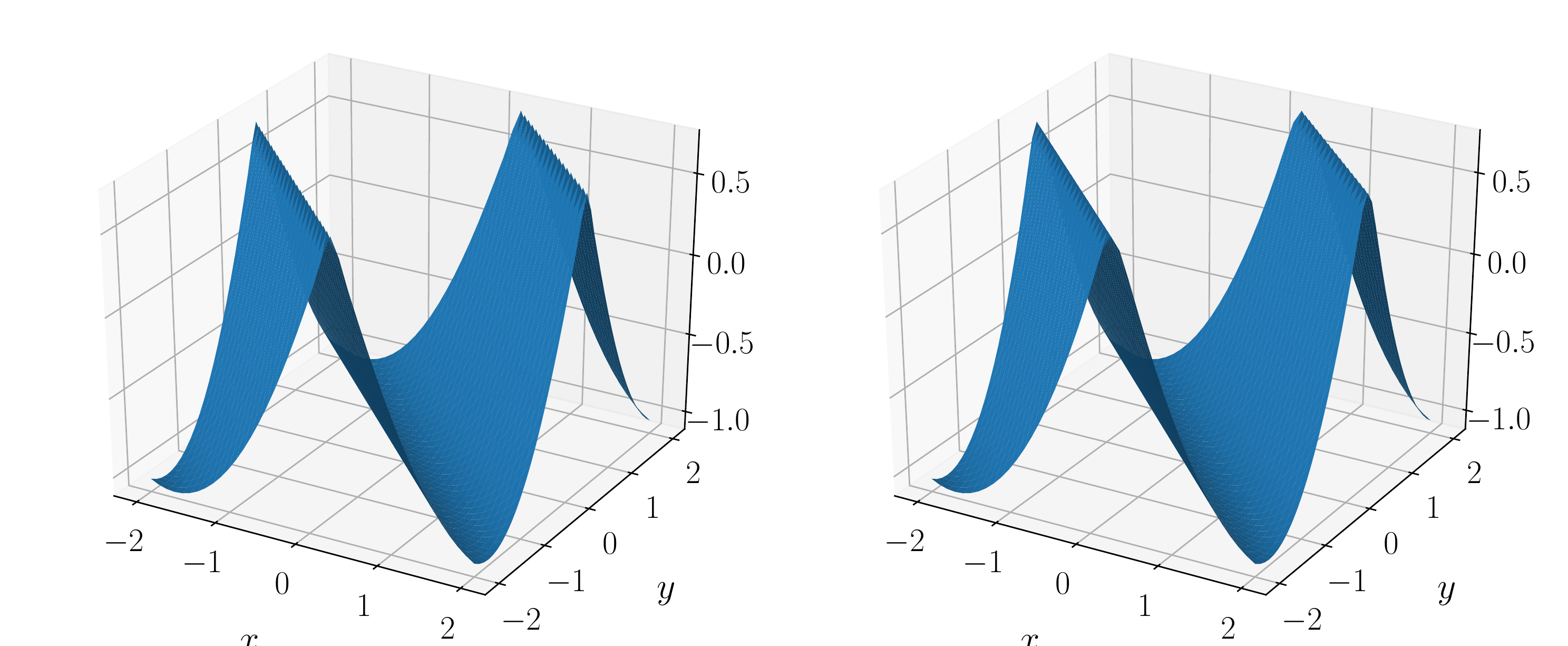}}
	\caption{\em  Numerical solutions for Example \ref{ex2d:2} with $\text{CFL}= 0.5$ (left) and $\text{CFL}= 2$ (right) at $T=1.5/\pi^2$.}
\label{fig2d:2}
\end{figure}

In this example, we solve the Burgers' equation
\begin{align}
    \phi_{t}+\frac{1}{2}(\phi_{x}+\phi_{y}+1)^2=0 
\end{align}
with a smooth initial condition
\begin{equation}\label{eq:2dinit}
    \phi(x,y,0)=-\cos(\pi(x+y)/2),
\end{equation}
on the periodic spatial domain $[-2,2]\times[-2,2]$. We compute the $L_\infty$ errors at time $T=0.5/\pi^2$, when the solution is smooth.
Table \ref{tab2d:2} shows the associated orders of accuracy measured for each of the schemes, which match the expected orders. Also in Figure \ref{fig2d:2}, is a plot of the numerical solutions using $40\times40$ grid points at time $T=1.5/\pi^2$, when the solution has developed a discontinuous derivative. 

\end{exa}

\begin{exa}\label{ex2d:3}
We now consider a Hamiltonian which is neither convex nor concave:
\begin{align}
    \phi_{t}-\cos(\phi_{x}+\phi_{y}+1)=0. 
\end{align}
Here the spatial domain  $[-2,2]\times[-2,2]$ is defined with periodic boundary conditions in both $x$ and $y$ directions. In Figure \ref{fig2d:3}, we plot the numerical solutions at time $T=1.5/\pi^2$ using a $40\times40$ grid with the smooth initial data \eqref{eq:2dinit}. We observe that the proposed schemes maintain good resolution when a larger CFL number is used, i.e., $\text{CFL}=2$, relative to $\text{CFL}=0.5$.

\begin{figure}[b!]
	\centering
	{\includegraphics[width=0.75\textwidth, height=4.3cm]{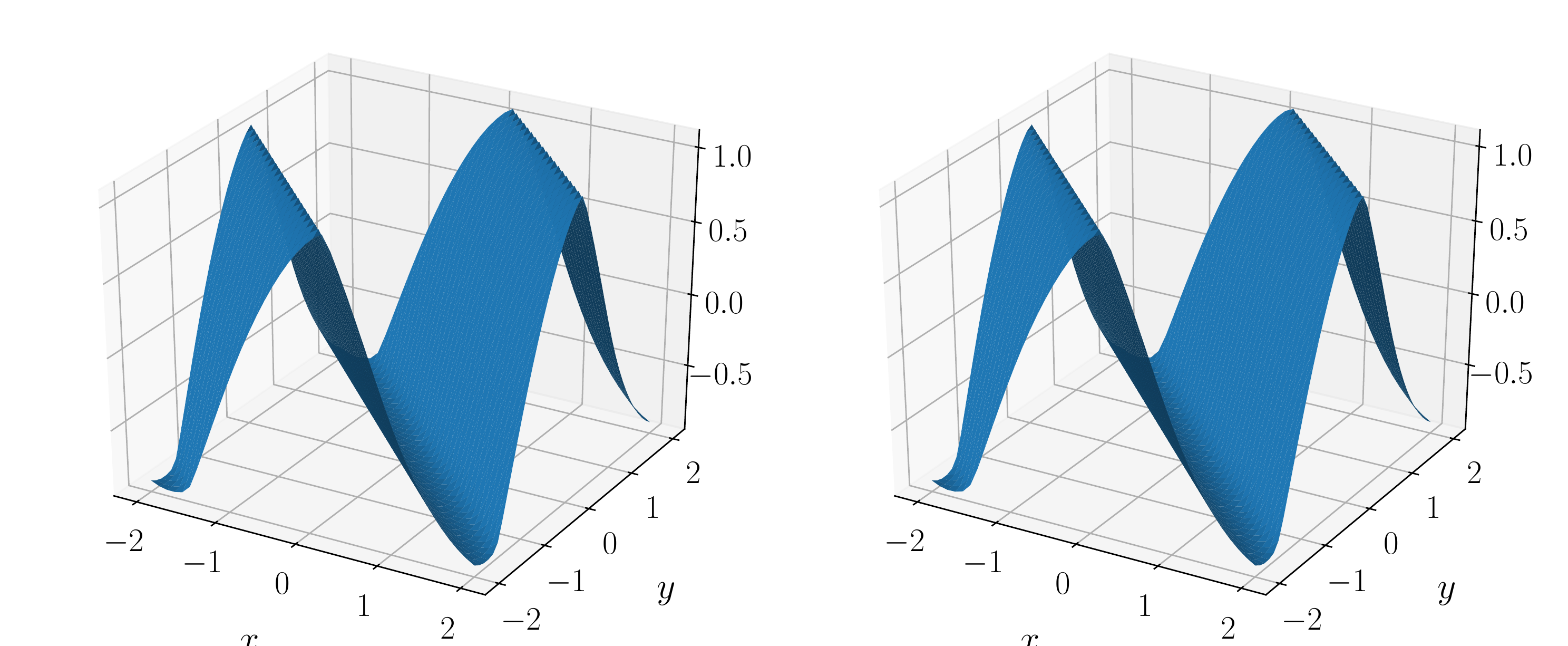}}
	\caption{\em  Numerical solutions for Example \ref{ex2d:3} with CFL=$0.5$ (left) and CFL=$2$ (right) at $T=1.5/\pi^2$.}
	\label{fig2d:3}
\end{figure}
\end{exa}

\begin{exa}\label{ex2d:5}
\begin{figure}[b!]
	\centering
\subfigure[CFL = $0.5$]{\includegraphics[width=0.78\textwidth, height=4.2cm]{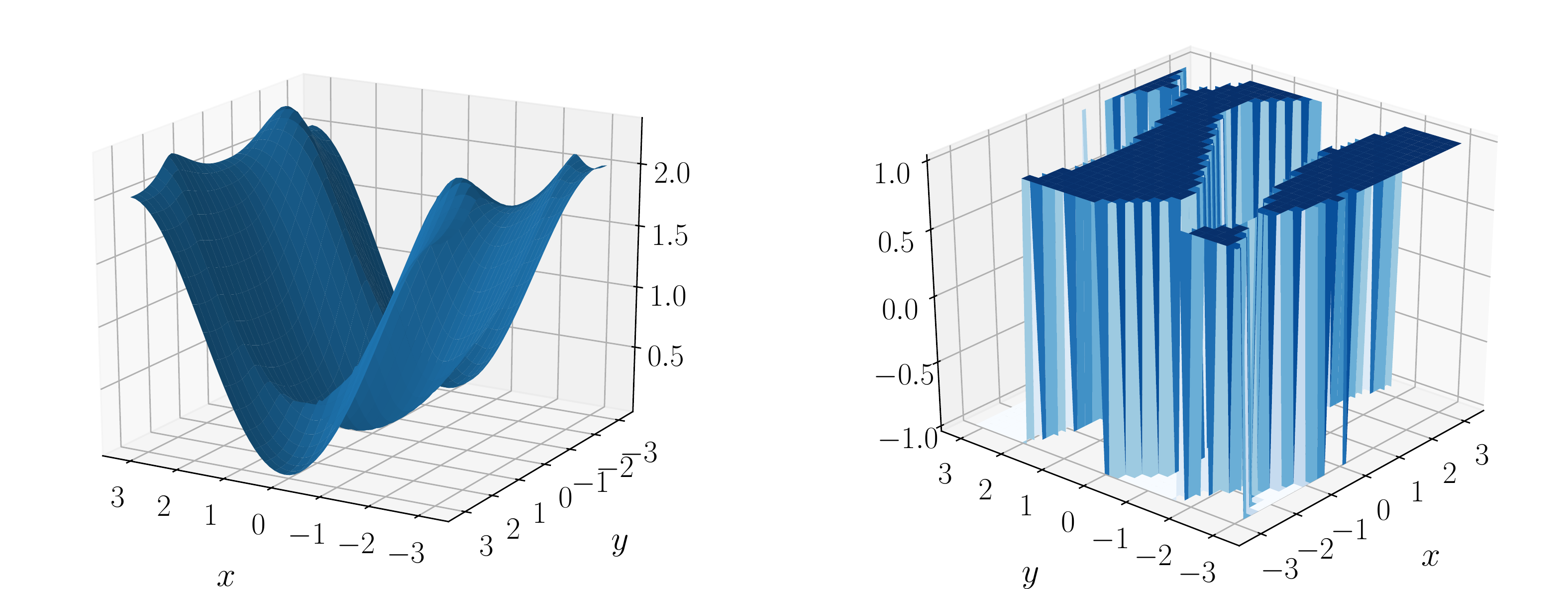}}
\subfigure[CFL = $2$]{\includegraphics[width=0.78\textwidth, height=4.2cm]{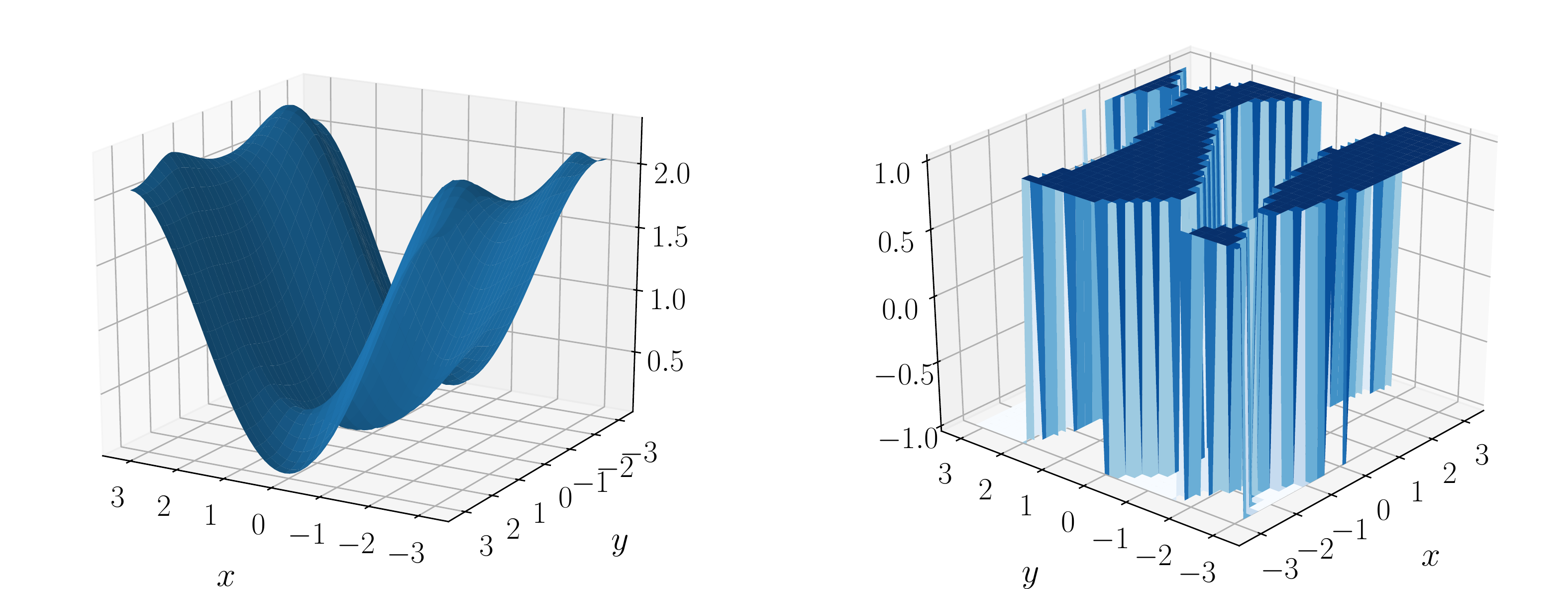}}
	\caption{\em Numerical solutions' surfaces (left) and optimal controls (right) for Example \ref{ex2d:5} at $T=1$.}
	\label{fig2d:5} 
\end{figure}

We solve the following optimal control problem related to cost determination:
\begin{align}
\begin{split}
&\phi_{t} + \sin(y) \phi_{x}+(\sin(x)+sign(\phi_{y}))\phi_{y}-\frac{1}{2}\sin^2(y)-1+\cos(x) =0,\\
&\phi(x,y,0)=0,\\
\end{split}
\end{align}
on the periodic spatial domain $[-\pi,\pi]\times[-\pi,\pi]$. We compute the numerical solutions up to $T=1$ using grids of size $60\times60$ and provide plots of the numerical solution and the optimal control $sign(\phi_y)$ in Figure \ref{fig2d:5}. Proposed schemes capture the non-smooth structures of the solutions with both CFL $ = 0.5$ and $2$.
\end{exa}

\begin{exa}\label{ex2d:4}
\begin{figure}[b!]
	\centering
	\subfigure[Proposed (CFL=$0.5$)]{\includegraphics[width=0.7\textwidth]{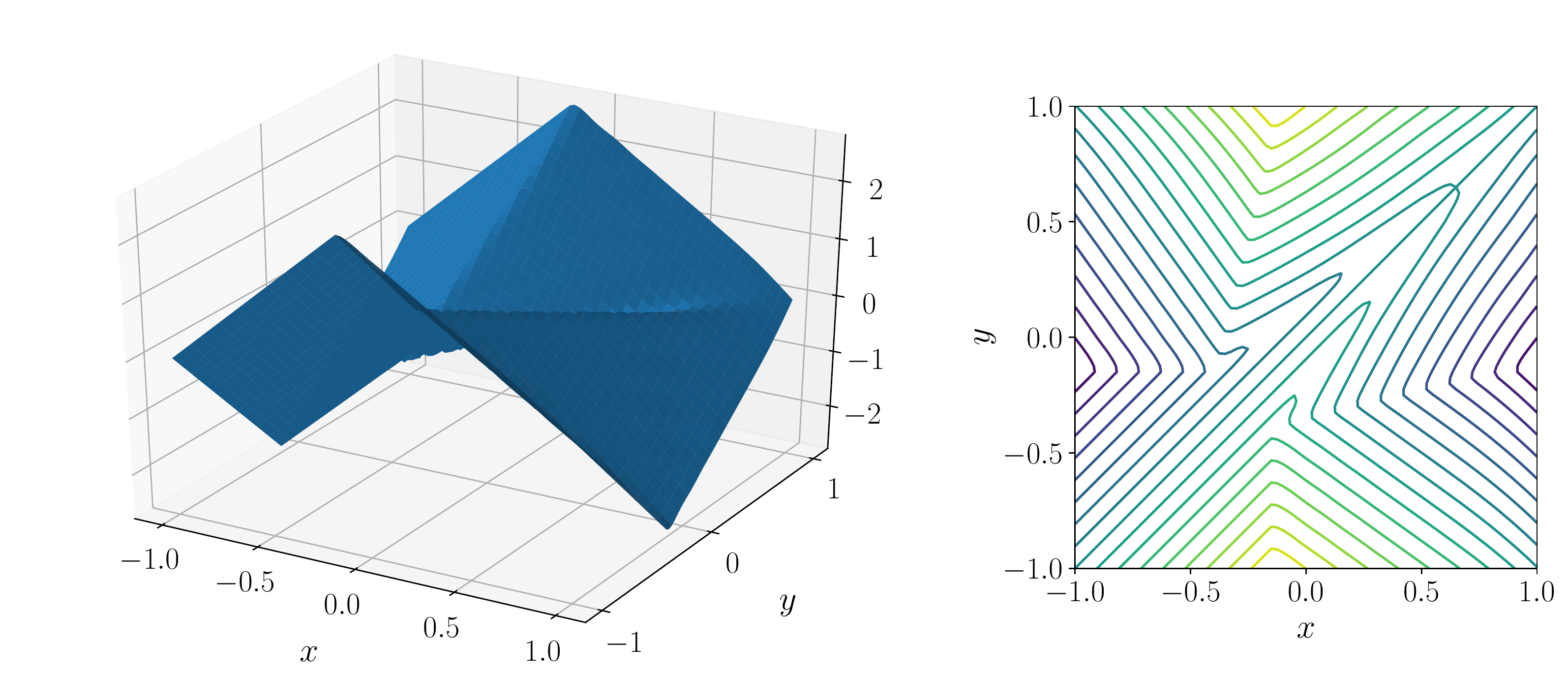}}
	\subfigure[Proposed (CFL=$2$)]{\includegraphics[width=0.7\textwidth]{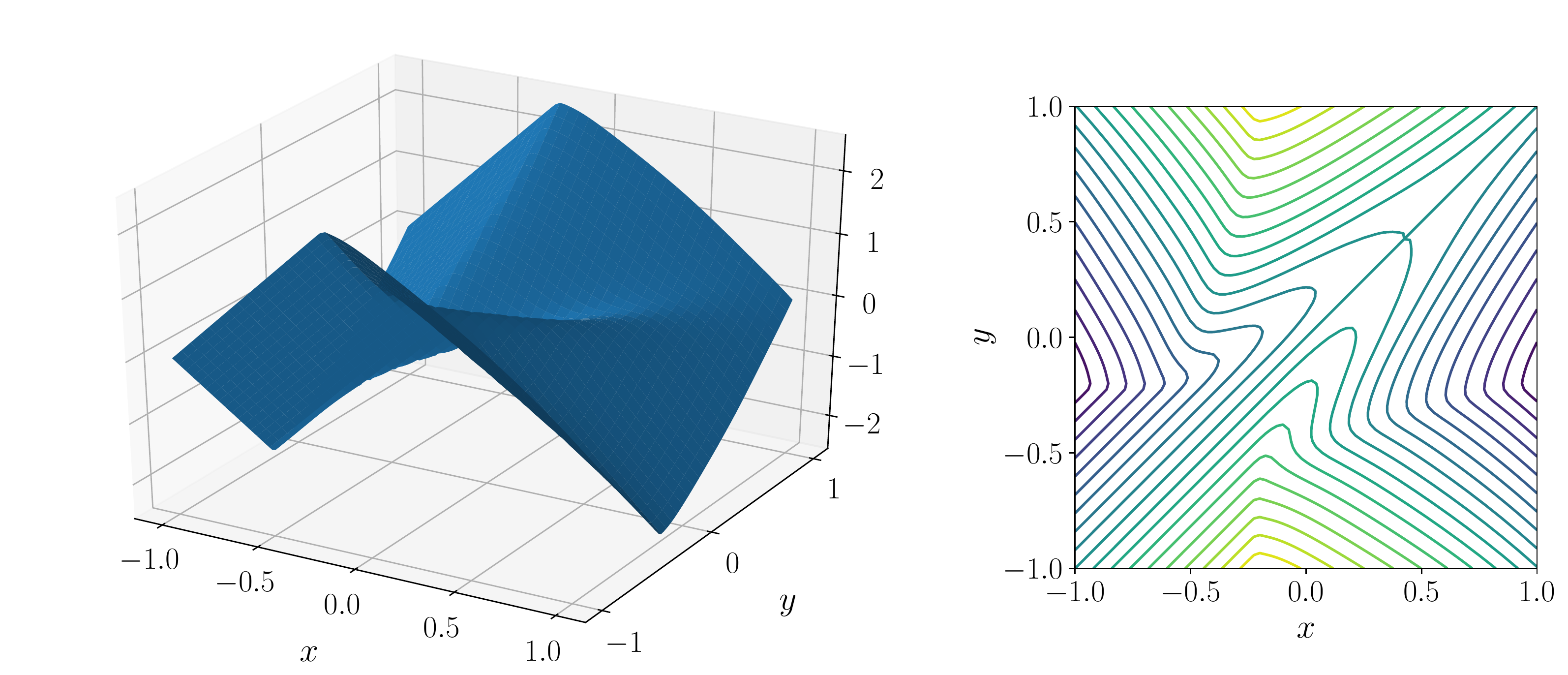}}
	\subfigure[Previous (CFL=$2$)]{\includegraphics[width=0.7\textwidth]{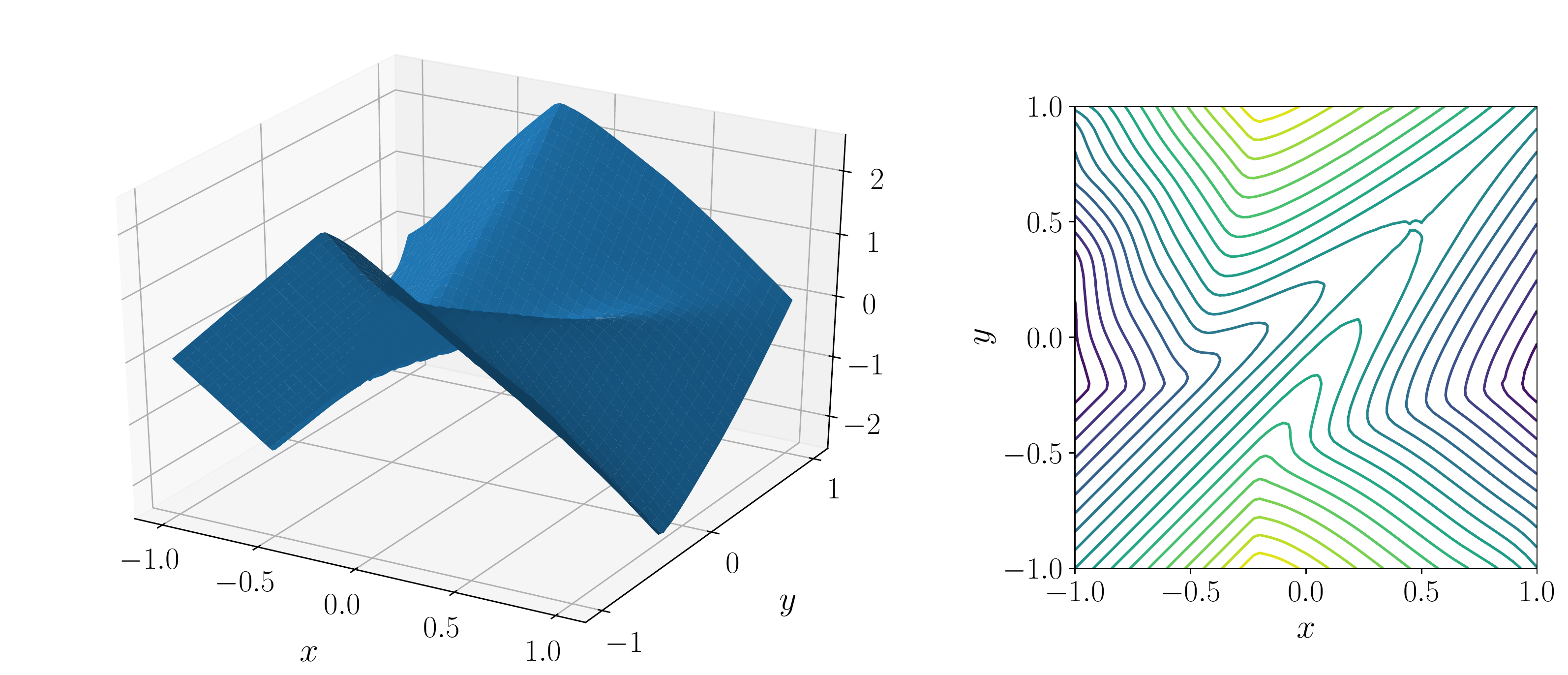}}
	\caption{\em Numerical solutions' surfaces (left) and contour (right) for Example \ref{ex2d:4} at $T=1$.}
	\label{fig2d:4}	
\end{figure}

In this example, we consider the two-dimensional Riemann problem with a non-convex Hamiltonian:
\begin{align}
\phi_{t} + \sin( \phi_{x}+\phi_{y} ) = 0,
\end{align}
on the spatial domain $[-1,1]\times[-1,1]$, where outflow boundary conditions are imposed. Our initial data is represented by the non-smooth function
\begin{align*}
    \phi(x,y,0)=\pi (|y|-|x|).
\end{align*}
We run the simulation using $80\times 80$ grid points and track the solution up to time $T=1$. Plots of the numerical solutions are provided in Figure \ref{fig2d:4}. Note that when a CFL number of $2$ is used, the proposed scheme reduces dissipation encountered by the previous scheme near the boundaries.
\end{exa}

\begin{exa}\label{ex2d:6}
	The next problem is a prototypical model in geometric optics, which is a Cauchy problem for H-J equation with a non-convex Hamiltonian:
		\begin{align}
		\begin{split}
		&  \phi_t + \sqrt{\phi_x^2 + \phi_y^2 + 1} = 0, \\
		&  \phi(x,y,0) = 0.25(\cos(2\pi x) - 1)(\cos(2\pi y) - 1)- 1,
		\end{split}
		\end{align}
		on a periodic domain $[0,1]\times[0,1]$. We approximate the solution using $60 \times 60$ grids up to a final time $T= 0.6$, during which the characteristics intersect. The surfaces and contour lines of the numerical solution are shown in Figure \ref{fig2d:6}. We note that sharp and symmetric regions are well-maintained in the solutions. 
\end{exa}

\begin{figure}[b!]
	\centering
	\subfigure[CFL = $0.5$]{\includegraphics[width=0.75\textwidth]{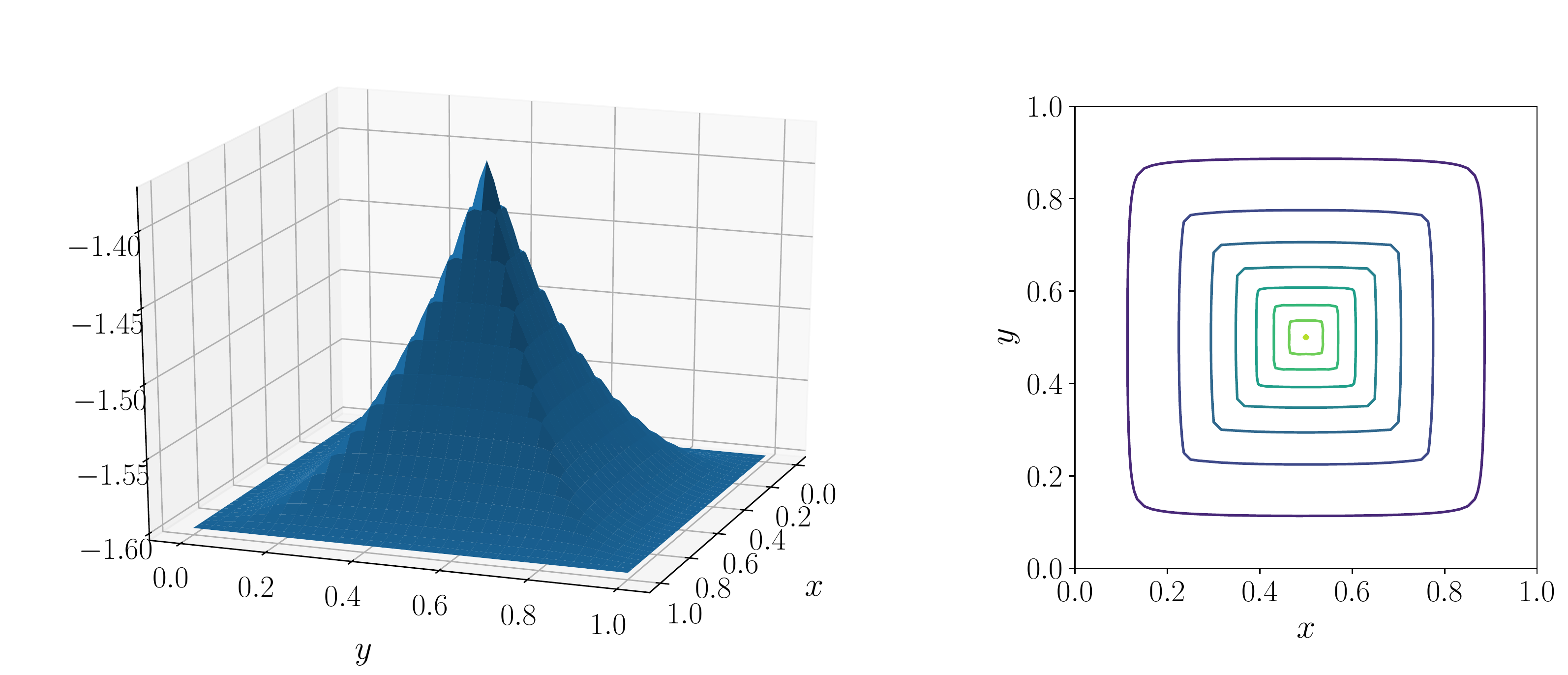}}
	\subfigure[CFL = $2$]{\includegraphics[width=0.75\textwidth]{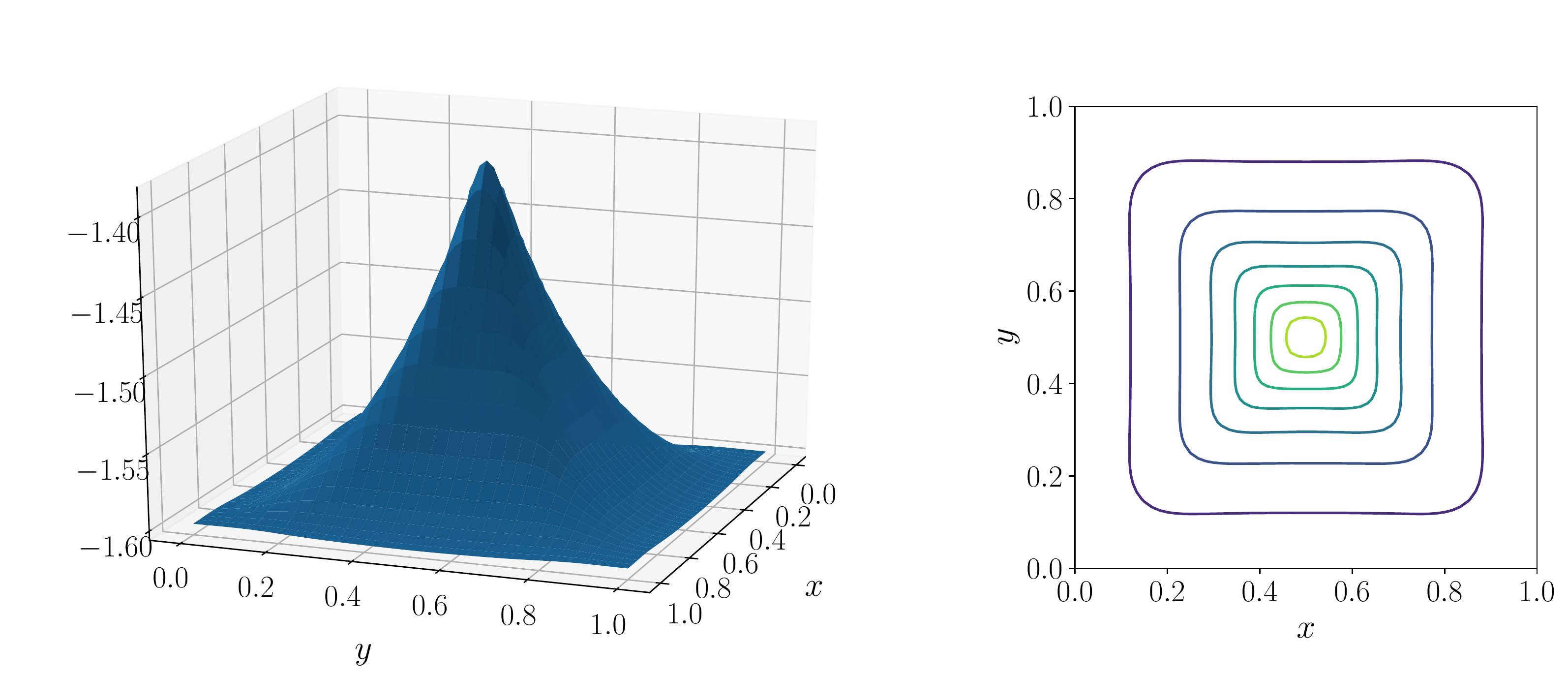}}
	\caption{\em Numerical solutions' surfaces (left) and contours (right) for Example \ref{ex2d:6} at $T=0.6$.}
	\label{fig2d:6}
\end{figure}


\begin{exa}\label{ex2d:7}
In this test problem, we change the sign of Hamiltonian and use the same initial function as the previous example to simulate a propagating surface: 
\begin{align}\label{eq:propagation}
\begin{split}
&  \phi_t - \sqrt{\phi_x^2 + \phi_y^2 + 1} = 0, \\
&  \phi(x,y,0)  = 1- 0.25(\cos(2\pi x) - 1)(\cos(2\pi y) - 1),
\end{split}
\end{align}
on the periodic domain $[0,1]\times[0,1]$.
As above, $60 \times 60$ grid points are used and 
the snapshots of numerical solutions at $t=0,\,0.3,\,0.6$ and $0.9$ are given in Figures \ref{Fig8}. We have also included plots of the solutions, which do not use the nonlinear filters (see Figures 4.\ref{Fig8.3}) to demonstrate their effect.

\begin{figure}[htb]
	\centering
    \vspace{2mm}
	\subfigure[CFL=$0.5$ ]{
		\includegraphics[width=0.31\textwidth]{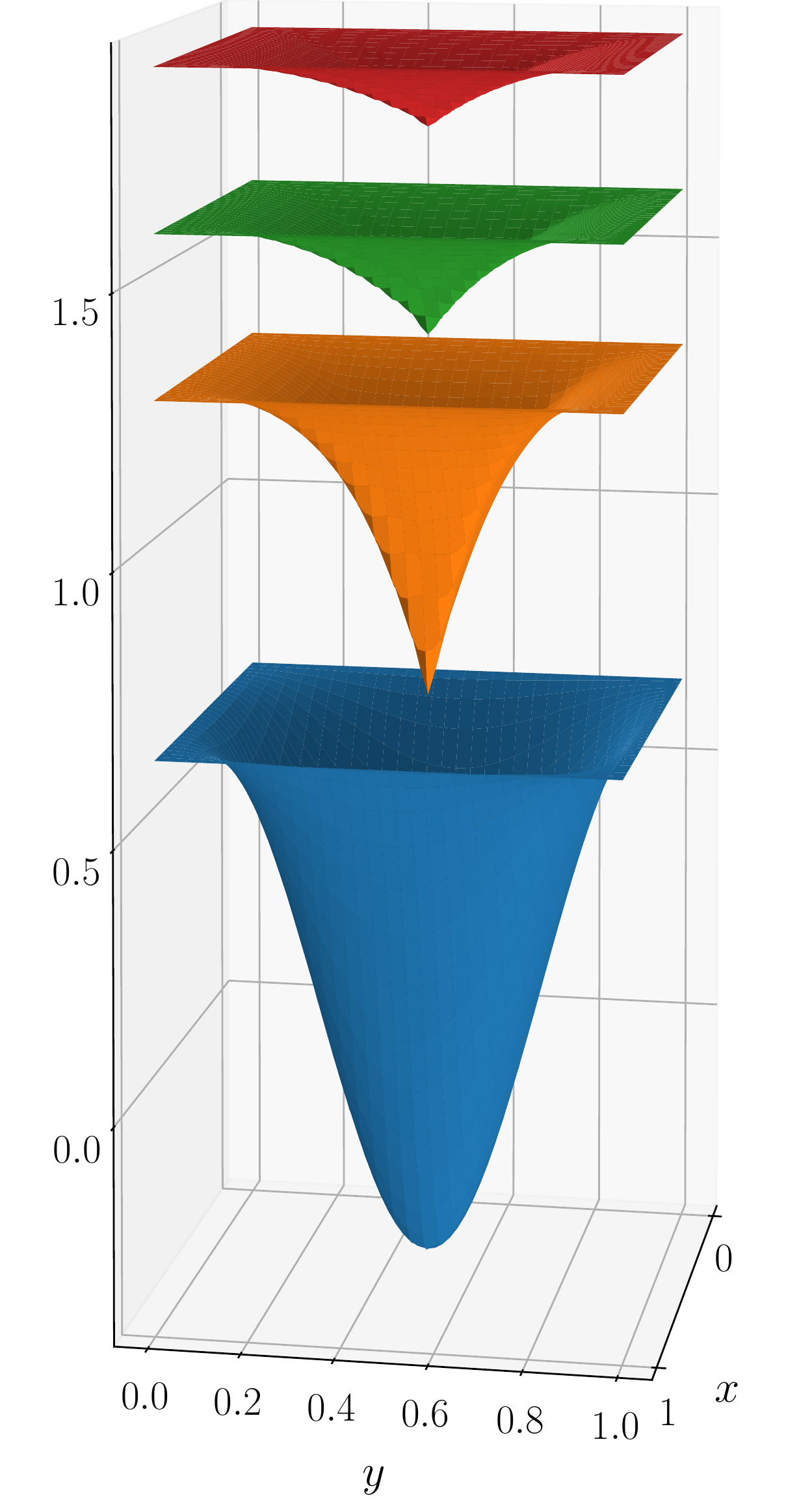}}
	\subfigure[CFL=$2$ with filters]{
		\includegraphics[width=0.31\textwidth]{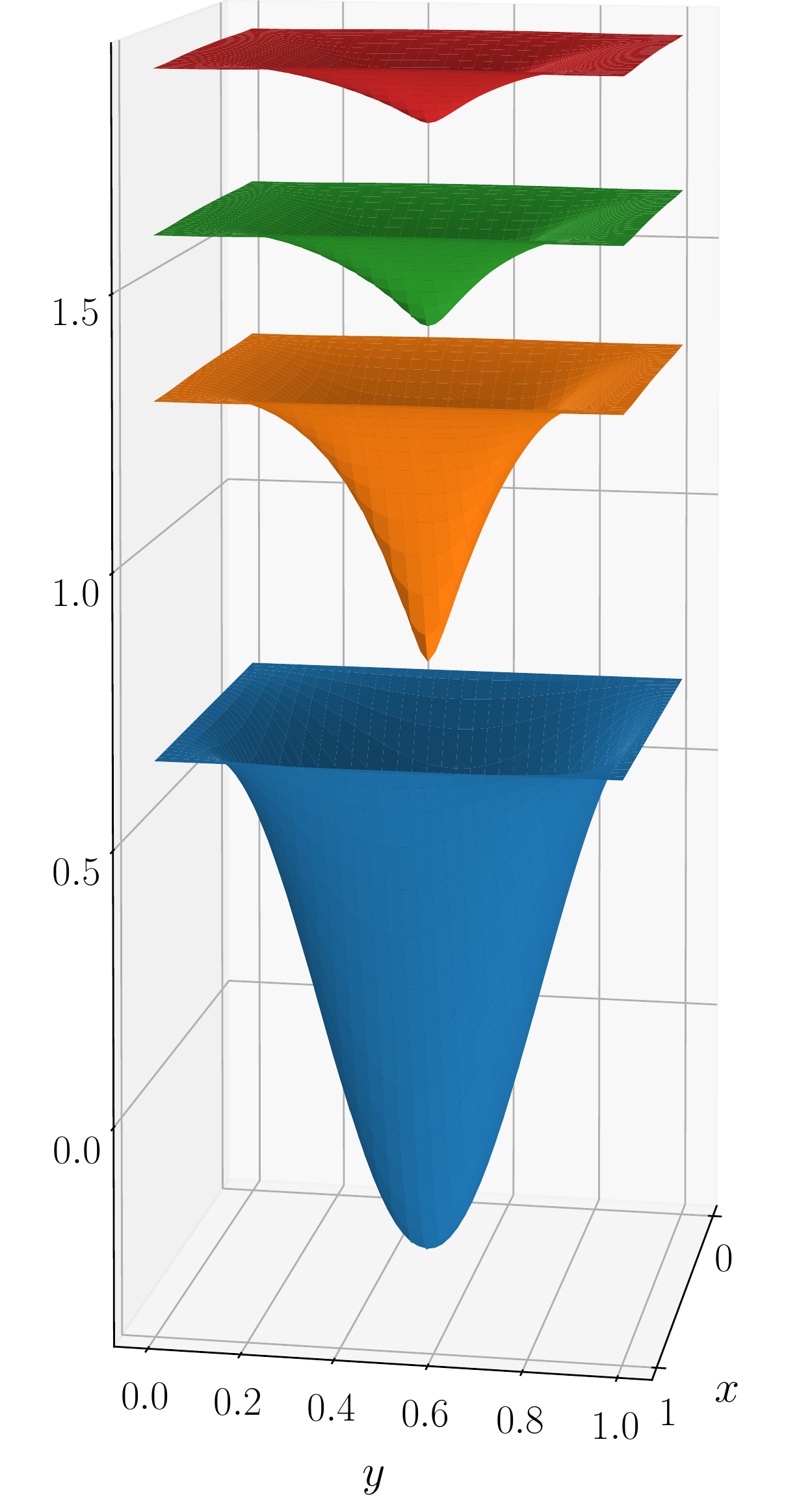}}
	\subfigure[CFL=$2$ without filters]{
	\includegraphics[width=0.31\textwidth]{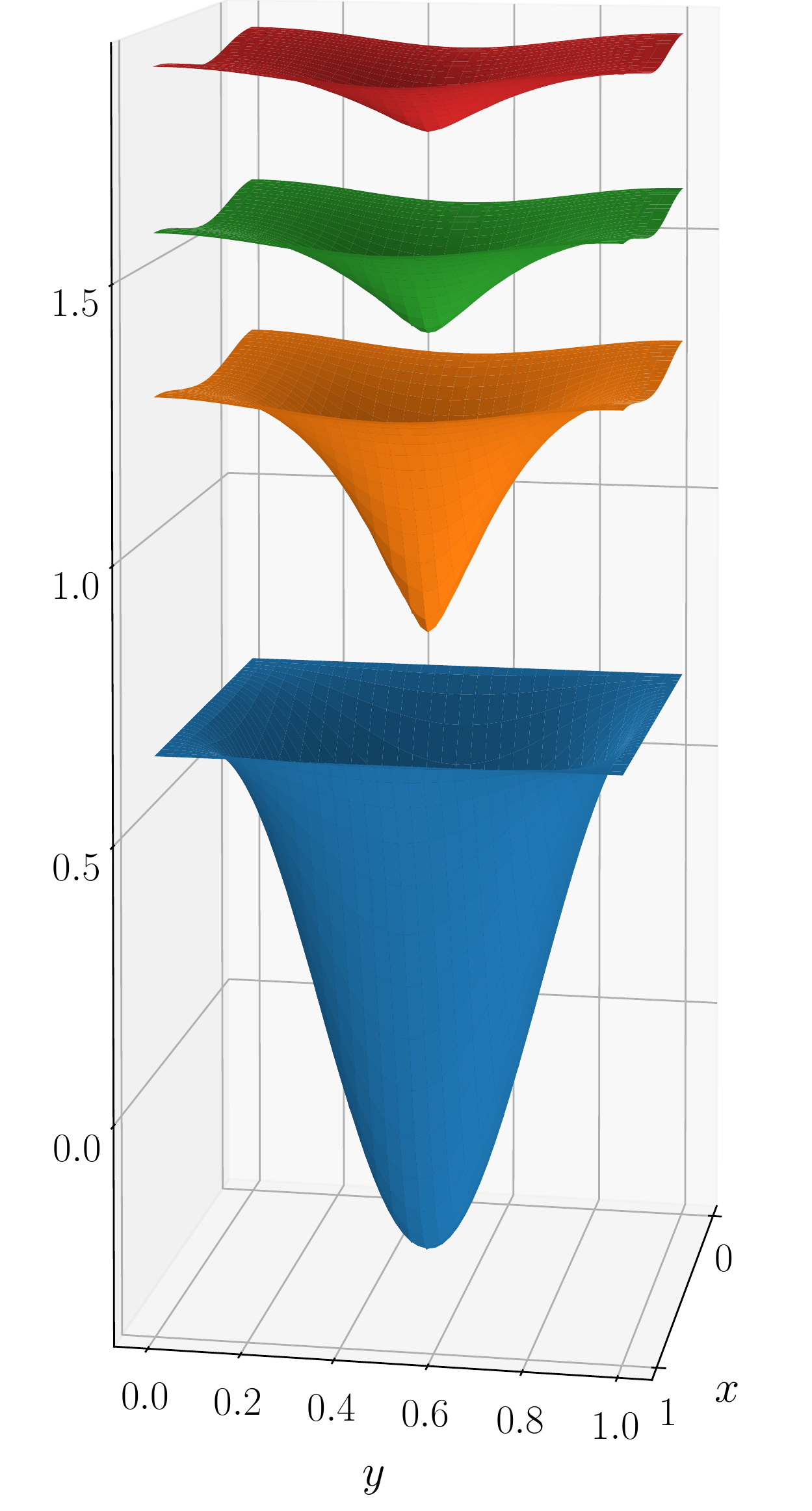}\label{Fig8.3}}
	\caption{\em Propagating numerical solutions' surfaces for Example \ref{ex2d:7} at T=0, 0.3, 0.6 and 0.9}
	\label{Fig8}
    \vspace{2mm}
\end{figure}

\end{exa}

\subsection{Examples with irregular grids}\label{subsec:map}

In this section, we apply the proposed scheme to several examples defined on irregular grids $x$ and $y$ in order to demonstrate the capabilities of the coordinate transformation $x=x(\xi), y=y(\eta)$ using uniform computational grids $\xi$ and $\eta$. Here, we present the results with the time step
\begin{align}
\Delta t=\frac{\text{CFL}}{\max(\alpha_{x}/\Delta \xi,\alpha_{y}/\Delta \eta)},
\end{align}
where $\alpha_x$ and $\alpha_y$ are the maximum wave propagation speeds in the $x$ and $y$ directions, respectively.

\begin{exa}\label{ex_map}
\begin{table}[t!]
	\centering
	\begin{small}
		\begin{tabular}{|c|c|cc|cc|cc|}
			\hline
			\multirow{2}{*}{CFL} &  \multirow{2}{*}{$N_x\times N_y$} & \multicolumn{2}{c|}{$k=1$. $\beta=1$.} & \multicolumn{2}{c|}{$k=2$. $\beta=0.5$.} & \multicolumn{2}{c|}{$k=3$. $\beta=0.6$.}\\
			\cline{3-8}
			& &  error &   order  &  error &  order  &  error  & order  \\\hline	
			\multirow{5}{*}{0.5}  
			&   $40\times40$  &  1.09e-01 & --    &  6.43e-02 & --     &  7.76e-03 & --    \\
			&   $80\times80$  &  6.14e-02 & 0.835 &  1.66e-02 & 1.951  &  1.00e-03 & 2.952 \\
			& $160\times160$  &  3.30e-02 & 0.895 &  4.47e-03 & 1.894  &  7.88e-05 & 3.669 \\
			& $320\times320$  &  1.71e-02 & 0.945 &  1.19e-03 & 1.916  &  4.69e-06 & 4.069 \\
			& $640\times640$  &  8.72e-03 & 0.975 &  3.05e-04 & 1.959  &  3.36e-07 & 3.802 \\\hline                   	
			\multirow{5}{*}{1} 
			&   $40\times40$  &  1.73e-01 & --     &  1.98e-01 & --    &  4.67e-02 & -- \\
			&   $80\times80$  &  1.09e-01 & 0.657  &  6.45e-02 & 1.621 &  7.72e-03 & 2.598  \\
			& $160\times160$  &  6.15e-02 & 0.831  &  1.66e-02 & 1.961 &  9.92e-04 & 2.960  \\
			& $320\times320$  &  3.30e-02 & 0.898  &  4.46e-03 & 1.893 &  7.13e-05 & 3.799  \\
			& $640\times640$  &  1.71e-02 & 0.945  &  1.18e-03 & 1.915 &  6.97e-06 & 3.354  \\\hline 
			\multirow{5}{*}{2}  
			&   $40\times40$  &  2.16e-01 & --     &  3.21e-01 & --     &  1.73e-01 & -- \\
			&   $80\times80$  &  1.72e-01 & 0.327  &  1.98e-01 & 0.696  &  4.68e-02 & 1.888  \\
			& $160\times160$  &  1.10e-01 & 0.653  &  6.45e-02 & 1.621  &  7.72e-03 & 2.600  \\
			& $320\times320$  &  6.15e-02 & 0.835  &  1.66e-02 & 1.962  &  1.00e-03 & 2.948  \\
			& $640\times640$  &  3.30e-02 & 0.897  &  4.46e-03 & 1.894  &  7.62e-05 & 3.716  \\\hline 
		\end{tabular}
	\end{small}
	\caption{\label{tab_map}\em $L_{\infty}$-errors and orders of accuracy for Example \ref{ex_map}  at $T=0.5/\pi^2$. }
\end{table}

	We first consider the two-dimensional Burgers' equation
	\begin{align}
	\begin{split}
	&\phi_{t}+\frac{1}{2}(\phi_{x}+\phi_{y}+1)^2=0, \\
	&\phi(x,y,0)=-\cos(\pi(x+y)/2),\\
	\end{split}
	\end{align}
	on the periodic domain $[-2,2]\times[-2,2]$. We generate a nonuniform mesh using random perturbations and compute the $L_\infty$ errors and orders of accuracy at $T=0.5/\pi^2$ while the solution is still smooth. In Table \ref{tab_map}, we confirm the convergence rates of the mapped scheme on nonuniform meshes.
\end{exa}


\begin{exa}\label{ex6-map}
	 We solve the two-dimensional Riemann problem with a non-convex Hamiltonian
	\begin{align}
	\begin{split}
	& \phi_{t} + \sin( \phi_{x}+\phi_{y} )=0, \quad -1\leq x,y\leq 1\\
	& 	\phi(x,y,0)=\pi (|y|-|x|)\\
	\end{split}
	\end{align}
	which we considered in Example \ref{ex2d:4}. To see the efficiency of the scheme, we construct a nonuniform mesh consisting of $60\times60$ grid points using a geometric series, selecting the ratio between the smallest cell size and the biggest cell size to be $1:7$. The resulting mesh is displayed in Figure 4.\ref{fig-mesh1}. On this nontrivial grid, we plot the numerical solutions' surfaces and contour lines at time $T=1$ in Figure \ref{Fig6map}. 
    \begin{figure}[h!]
    	\centering
    	\subfigure[Nonuniform mesh]{
    	\includegraphics[width=0.3\textwidth]{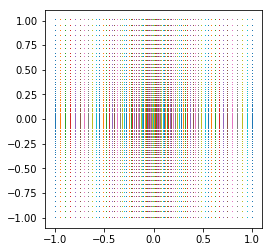}    	\label{fig-mesh1} }
    	\subfigure[Numerical solutions' surfaces (left) and contour (right)]
    	{\includegraphics[width=0.73\textwidth]{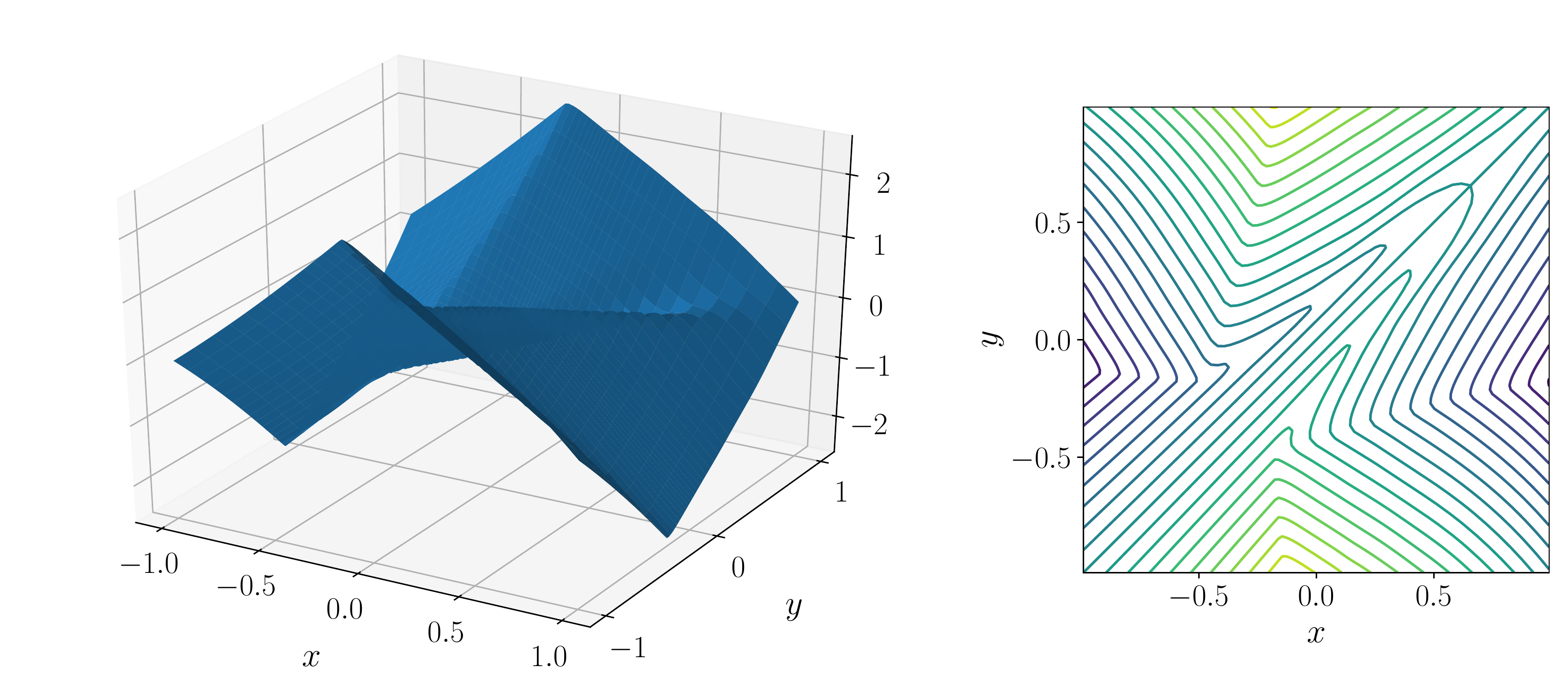}    	}
    	\caption{\em Numerical solutions on nonuniform meshes for Example \ref{ex6-map} at $T=1$.}
    		\label{Fig6map}
    \end{figure}

\end{exa}


\begin{exa}\label{ex9}

    \begin{figure}[b!]
        \centering
        \subfigure[The discretization of the domain]
                {\includegraphics[width=0.3\textwidth,clip=false]{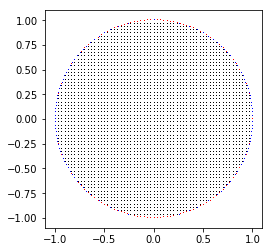}    	\label{Fig9a}}
                
    	\vspace{2mm}
        \subfigure[Propagating numerical solutions with CFL=$0.5$ (Left) and CFL=$2$ (Right)]{
    	{\includegraphics[width=0.31\textwidth, height=10.3cm]{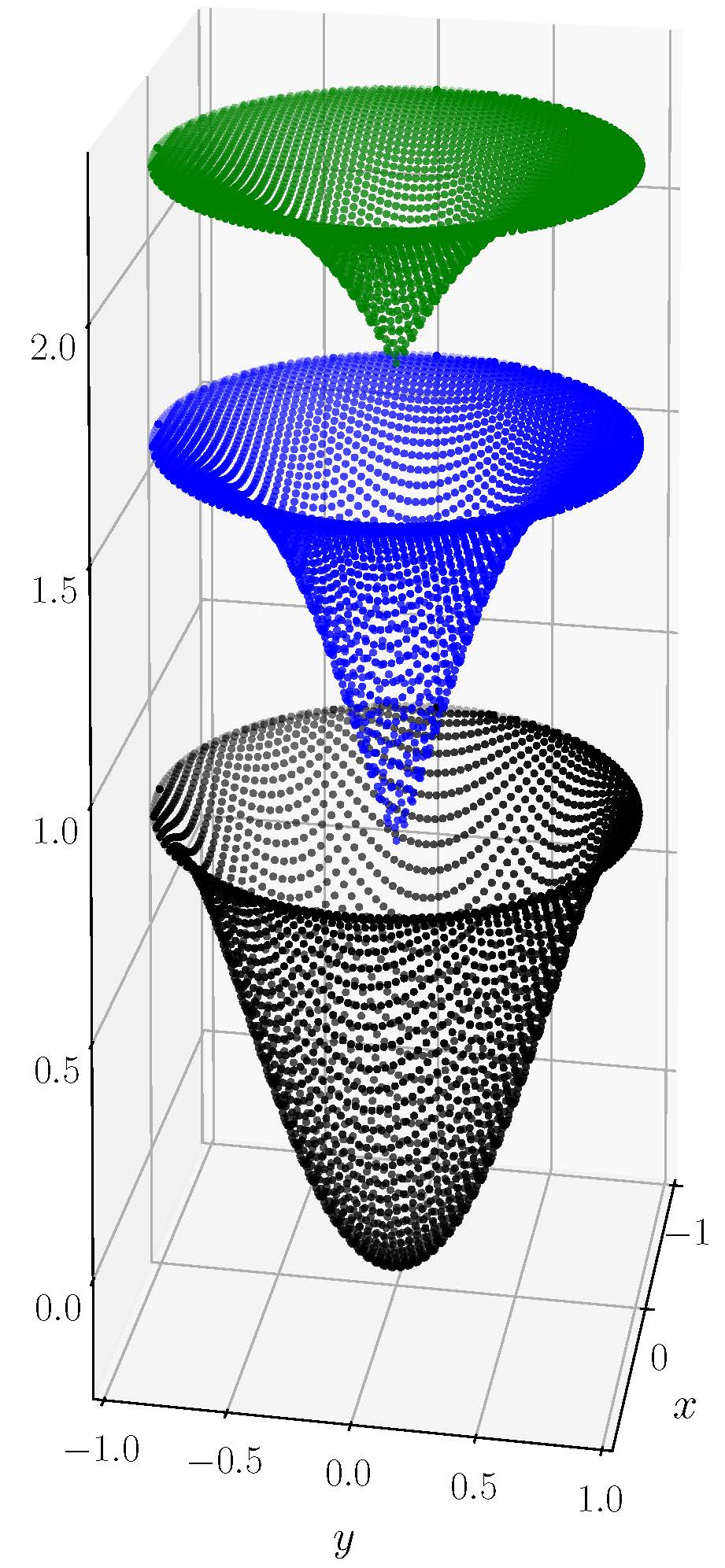}}
    	\hspace{7mm}
    	{\includegraphics[width=0.31\textwidth, height=10.3cm]{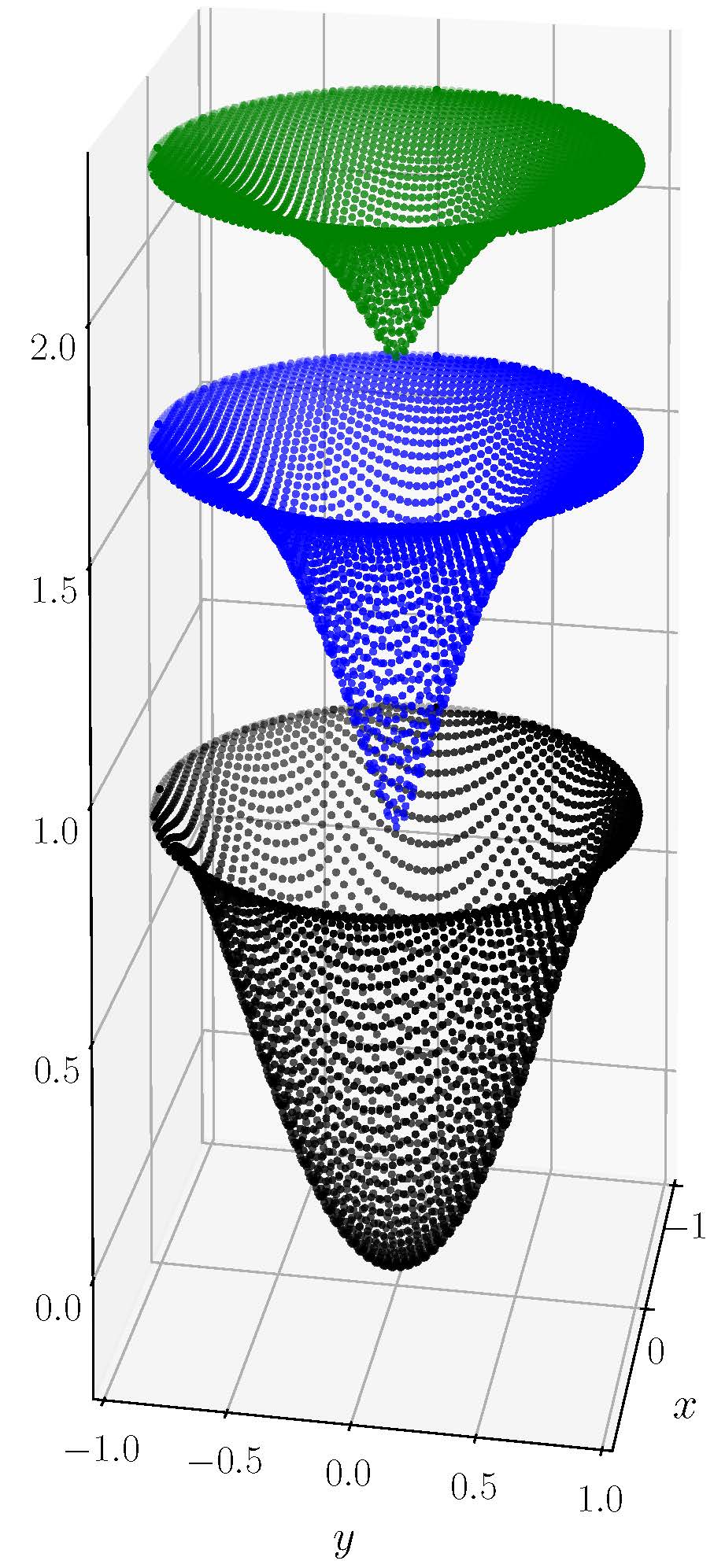}}}
    	\caption{\em Propagating numerical solutions at T=0, 0.6 and 1.2 on nontrivial meshes for Example \ref{ex9}}
    	\label{Fig9b}
    \end{figure}
    
	We consider the same problem of a propagating surface \eqref{eq:propagation} in Example \ref{ex2d:7} 
	with the initial condition
	$$\phi(x, y, 0) = \sin \left( \frac{\pi}{2}(x^2+y^2) \right) $$ defined on the unit disk $x^2 + y^2 \leq 1$ where the Dirichlet boundary condition $$\phi(x,y,t)=1+t \quad \text{for all}\quad x^2+y^2=1$$
	is imposed. The domain is discretized by embedding the boundary in a regular $60\times 60$ Cartesian mesh so that the irregular spacing occurs only near the boundary. The discretization of the domain is illustrated in Figure 4.\ref{Fig9a}. Blue and Red dots in the Figure indicate $x$ and $y$ directional boundary points, respectively. Snapshots of the propagating surface taken at $T=0, 0.6$ and $1.2$ are given in Figure \ref{Fig9b}.
\end{exa}


\begin{exa}\label{ex10}
	As our last example on nonuniform meshes, we consider the ``level set reinitialization'' equation \cite{sus}
	\begin{align}
    	\phi_t + {\rm sign}(\phi_0) (\sqrt{ \phi_x^2 + \phi_y^2} -1 ) = 0,
	\end{align}
	on the circular domain $\frac{1}{2} < \sqrt{x^2+y^2} < 1$. We choose an initial function with the signed distance function to the circle centered
	at the origin:
	\begin{equation}
	    \phi(x,y,0) = \phi_0 (x,y) = \sqrt{x^2+y^2} - 0.5.
	\end{equation}
	The hole in the circular domain is discretized by, again, embedding the boundary in a regular Cartesian mesh with $60\times60$ grid points. In Figure \ref{Fig10a}, we plot the resulting surface of the numerical solution at time $T=1$ on the mesh.

\begin{figure}[h!]
	\centering
	\subfigure[The domain]{	\includegraphics[width=0.3\textwidth]{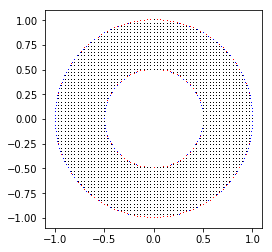} }
	\hspace{3mm}
	\subfigure[Solution]{\includegraphics[width=0.4\textwidth]{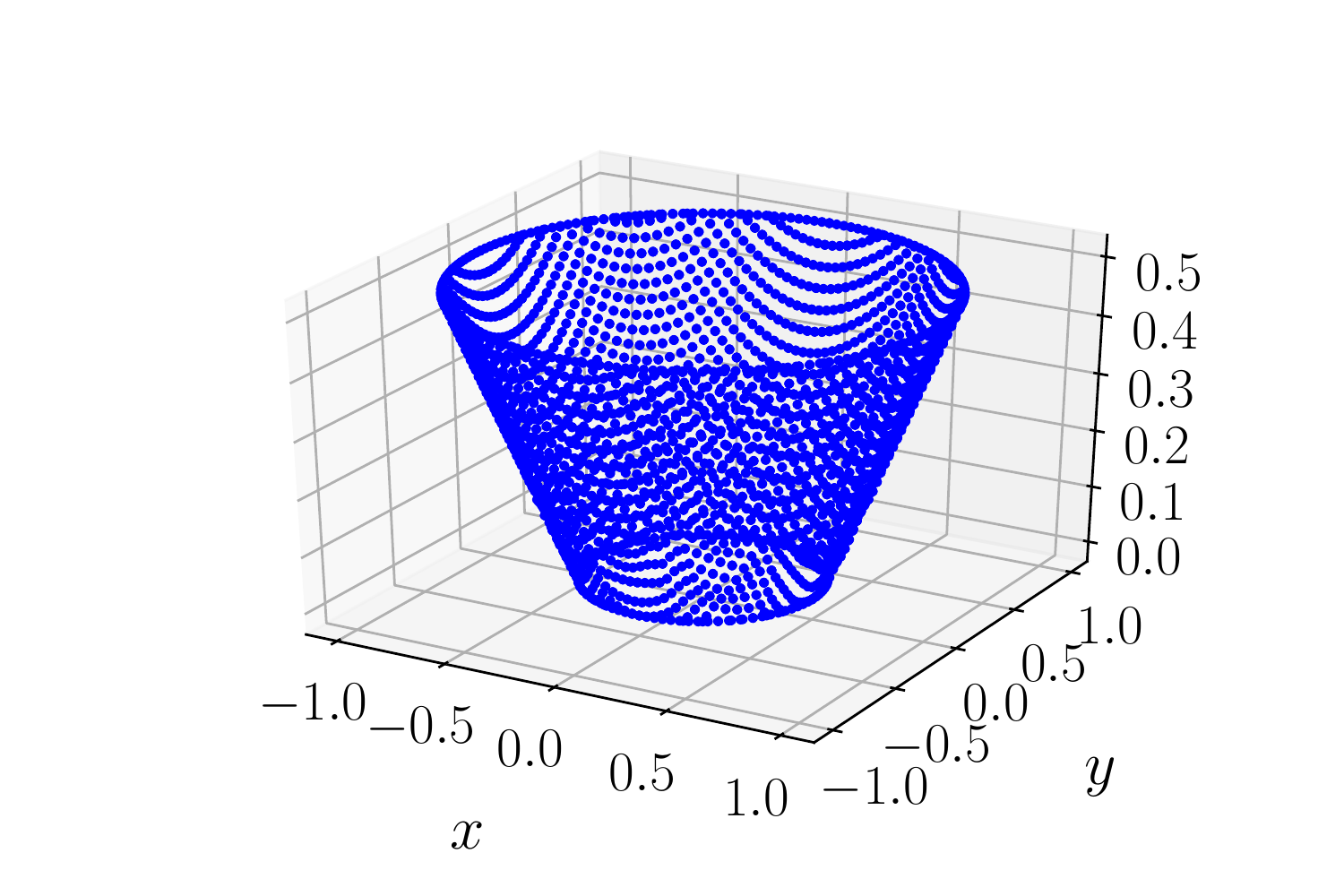}}
	\caption{\em (a) The discretization of the domain which is embedded in a Cartesian grid. Blue and Red dots indicate $x$ and $y$ directional boundary points, respectively. (b) Surface of the solution for Example \ref{ex10}.}
	\label{Fig10a}
\end{figure}

\end{exa}


\section{Conclusion}

In this paper, a class of high order unconditionally stable scheme is proposed to solve the Hamilton-Jacobi (H-J) equations.
Following the previous works \cite{christlieb2017kernel} and \cite{christlieb2019kernel}, the scheme is developed in the light of the kernel based approach for approximating the spatial derivatives in the H-J equation. 
The proposed scheme makes use of a  exponential basis to construct a novel WENO methodology for the space discretization.   The new WENO method is adept at  capturing sharp gradients without oscillations. By leveraging a coordinate transformation, we implement this scheme on high dimensional nonuniform meshes to enable the method to compute numerical solutions defined on domains containing more complex geometry.  The new method outperforms our previous unconditionally stable kernel based method on all examples tried in this work.

Although one of the advantages of using exponential polynomials (e.g., $e^{\lambda x}$) is that they can be tuned by choosing a tension parameter $\lambda$ that depend on the characteristics of the given data , the choice of the parameter is not a focus of this paper. For our  future study, we would like to develop a WENO scheme capable of choosing the optimal parameters in the local space.

\bibliographystyle{abbrv}
\bibliography{ref}

\begin{thebibliography}{10}

\bibitem{fc-ad1}
O.~P. Bruno and M.~Lyon.
\newblock High-order unconditionally stable fc-ad solvers for general smooth
  domains i. basic elements.
\newblock {\em Journal of Computational Physics}, 229:2003--2033, 2010.

\bibitem{causley2017method}
M.~Causley, H.~Cho, and A.~Christlieb.
\newblock Method of lines transpose: Energy gradient flows using direct
  operator inversion for phase-field models.
\newblock {\em SIAM Journal on Scientific Computing}, 39(5):B968--B992, 2017.

\bibitem{causley2014method}
M.~Causley, A.~Christlieb, B.~Ong, and L.~Van~Groningen.
\newblock Method of lines transpose: An implicit solution to the wave equation.
\newblock {\em Mathematics of Computation}, 83(290):2763--2786, 2014.

\bibitem{causley2016method}
M.~F. Causley, H.~Cho, A.~J. Christlieb, and D.~C. Seal.
\newblock Method of lines transpose: High order {L}-stable $\mathcal{O}({N})$
  schemes for parabolic equations using successive convolution.
\newblock {\em SIAM Journal on Numerical Analysis}, 54(3):1635--1652, 2016.

\bibitem{causley2013method}
M.~F. Causley, A.~J. Christlieb, Y.~Guclu, and E.~Wolf.
\newblock Method of lines transpose: A fast implicit wave propagator.
\newblock {\em arXiv preprint arXiv:1306.6902}, 2013.

\bibitem{cheng2017asymptotic}
Y.~Cheng, A.~J. Christlieb, W.~Guo, and B.~Ong.
\newblock An asymptotic preserving {M}axwell solver resulting in the darwin
  limit of electrodynamics.
\newblock {\em Journal of Scientific Computing}, 71(3):959--993, 2017.

\bibitem{cheng2007discontinuous}
Y.~Cheng and C.-W. Shu.
\newblock A discontinuous {G}alerkin finite element method for directly solving
  the {H}amilton--{J}acobi equations.
\newblock {\em Journal of Computational Physics}, 223(1):398--415, 2007.

\bibitem{cheng2014new}
Y.~Cheng and Z.~Wang.
\newblock A new discontinuous {G}alerkin finite element method for directly
  solving the {H}amilton--{J}acobi equations.
\newblock {\em Journal of Computational Physics}, 268:134--153, 2014.

\bibitem{christlieb2016weno}
A.~Christlieb, W.~Guo, and Y.~Jiang.
\newblock {A WENO-based Method of Lines Transpose approach for Vlasov
  simulations}.
\newblock {\em Journal of Computational Physics}, 327:337--367, 2016.

\bibitem{christlieb2017kernel}
A.~Christlieb, W.~Guo, and Y.~Jiang.
\newblock Kernel based high order ``explicit" unconditionally-stable scheme for
  nonlinear degenerate advection-diffusion equations.
\newblock {\em arXiv preprint arXiv:1707.09294}, 2017.

\bibitem{christlieb2019kernel}
A.~Christlieb, W.~Guo, and Y.~Jiang.
\newblock {A kernel based high order “explicit” unconditionally stable
  scheme for time dependent Hamilton--Jacobi equations}.
\newblock {\em Journal of Computational Physics}, 379:214--236, 2019.

\bibitem{yang_2019}
A.~Christlieb, W.~Sands, and H.~Yang.
\newblock https://github.com/hyoseonyang/molt-tutorial.git, May 2019.

\bibitem{crandall1984some}
M.~G. Crandall, L.~C. Evans, and P.-L. Lions.
\newblock Some properties of viscosity solutions of {H}amilton-{J}acobi
  equations.
\newblock {\em Transactions of the American Mathematical Society},
  282(2):487--502, 1984.

\bibitem{crandall1983viscosity}
M.~G. Crandall and P.-L. Lions.
\newblock Viscosity solutions of {H}amilton-{J}acobi equations.
\newblock {\em Transactions of the American Mathematical Society},
  277(1):1--42, 1983.

\bibitem{crandall1984two}
M.~G. Crandall and P.-L. Lions.
\newblock Two approximations of solutions of {H}amilton-{J}acobi equations.
\newblock {\em Mathematics of Computation}, 43(167):1--19, 1984.

\bibitem{fmm1}
Z.~Gimbutas and V.~Rokhlin.
\newblock Generalized fast multipole method for nonoscillatory kernels.
\newblock {\em {SIAM Journal on Scientific Computing}, pages={796--817},
  year={2002}, publisher={SIAM}}, 24.

\bibitem{gottlieb2001strong}
S.~Gottlieb, C.-W. Shu, and E.~Tadmor.
\newblock Strong stability-preserving high-order time discretization methods.
\newblock {\em SIAM review}, 43(1):89--112, 2001.

\bibitem{HKYY}
Y.~Ha, C.~H. Kim, H.~Yang, and J.~Yoon.
\newblock Sixth-order weighted essentially nonoscillatory schemes based on
  exponential polynomials.
\newblock {\em SIAM Journal on Scientific Computing}, 38:1987--2017, 2016.

\bibitem{hu1999discontinuous}
C.~Hu and C.-W. Shu.
\newblock A discontinuous {G}alerkin finite element method for
  {H}amilton--{J}acobi equations.
\newblock {\em SIAM Journal on Scientific computing}, 21(2):666--690, 1999.

\bibitem{jiang2000weighted}
G.-S. Jiang and D.~Peng.
\newblock Weighted {ENO} schemes for {H}amilton--{J}acobi equations.
\newblock {\em SIAM Journal on Scientific computing}, 21(6):2126--2143, 2000.

\bibitem{KSbook}
S.~Karlin and W.~Studden.
\newblock {\em {Tchebycheff Systems: With Applications in Analysis and
  Statistics}}.
\newblock Interscience Publishers, 1966.

\bibitem{fmm0}
M.~C.~A. Kropinski and B.~D. Quaife.
\newblock Fast integral equation methods for the modified helmholtz equation.
\newblock {\em Journal of Computational Physics}, 230:425--434, 2011.

\bibitem{lepsky2000analysis}
O.~Lepsky, C.~Hu, and C.-W. Shu.
\newblock Analysis of the discontinuous {G}alerkin method for
  {H}amilton--{J}acobi equations.
\newblock {\em Applied Numerical Mathematics}, 33(1-4):423--434, 2000.

\bibitem{fc-ad2}
M.~Lyon and O.~P. Bruno.
\newblock High-order unconditionally stable fc-ad solvers for general smooth
  domains ii. elliptic, parabolic and hyperbolic pdes; theoretical
  considerations.
\newblock {\em Journal of Computational Physics}, 229:3358--3381, 2010.

\bibitem{osher1988fronts}
S.~Osher and J.~A. Sethian.
\newblock Fronts propagating with curvature-dependent speed: algorithms based
  on {H}amilton-{J}acobi formulations.
\newblock {\em Journal of computational physics}, 79(1):12--49, 1988.

\bibitem{osher1991high}
S.~Osher and C.-W. Shu.
\newblock High-order essentially nonoscillatory schemes for
  {H}amilton--{J}acobi equations.
\newblock {\em SIAM Journal on numerical analysis}, 28(4):907--922, 1991.

\bibitem{qiu2007hermite}
J.~Qiu.
\newblock Hermite {WENO} schemes with lax-wendroff type time discretizations
  for {H}amilton--{J}acobi equations.
\newblock {\em Journal of Computational Mathematics}, pages 131--144, 2007.

\bibitem{qiu2005hermite}
J.~Qiu and C.-W. Shu.
\newblock Hermite {WENO} schemes for {H}amilton--{J}acobi equations.
\newblock {\em Journal of Computational Physics}, 204(1):82--99, 2005.

\bibitem{salazar2000theoretical}
A.~Salazar, M.~Raydan, and A.~Campo.
\newblock Theoretical analysis of the exponential transversal method of lines
  for the diffusion equation.
\newblock {\em Numerical Methods for Partial Differential Equations},
  16(1):30--41, 2000.

\bibitem{schemann1998adaptive}
M.~Schemann and F.~A. Bornemann.
\newblock An adaptive rothe method for the wave equation.
\newblock {\em Computing and Visualization in Science}, 1(3):137--144, 1998.

\bibitem{soug}
P.~E. Souganidis.
\newblock Approximation schemes for viscosity solutions of hamilton-jacobi
  equations.
\newblock {\em Journal of Differential Equations}, 59:1--43, 1985.

\bibitem{sus}
M.~Sussman, P.~Smereka, and S.~Osher.
\newblock A level set approach for computing solutions to incompressible
  two-phase flow.
\newblock {\em Journal of Computational Physics}, 114:146--159, 1994.

\bibitem{wenop3}
W.~Xu and W.~Wu.
\newblock An improved third-order weno-z scheme.
\newblock {\em Journal of Scientific Computing}, 75(3):1808--1841, 2018.

\bibitem{yan2011local}
J.~Yan and S.~Osher.
\newblock A local discontinuous {G}alerkin method for directly solving
  {H}amilton--{J}acobi equations.
\newblock {\em Journal of Computational Physics}, 230(1):232--244, 2011.

\bibitem{zhang2003high}
Y.-T. Zhang and C.-W. Shu.
\newblock High-order {WENO} schemes for {H}amilton--{J}acobi equations on
  triangular meshes.
\newblock {\em SIAM Journal on Scientific Computing}, 24(3):1005--1030, 2003.

\bibitem{zheng2017finite}
F.~Zheng, C.-W. Shu, and J.~Qiu.
\newblock Finite difference {H}ermite {WENO} schemes for the
  {H}amilton--{J}acobi equations.
\newblock {\em Journal of Computational Physics}, 337:27--41, 2017.

\bibitem{zhu2013hermite}
J.~Zhu and J.~Qiu.
\newblock Hermite {WENO} schemes for {H}amilton--{J}acobi equations on
  unstructured meshes.
\newblock {\em Journal of Computational Physics}, 254:76--92, 2013.

\end{thebibliography}

\end{document}